\newcommand{\Spec}{{\mathrm{Spec}\, }}
\newcommand{\bbar}[1]{{\overline{#1}}}
\newcommand{\cA}{{\mathcal A}}
\newcommand{\cC}{{\mathscr C}}
\newcommand{\cE}{{\mathcal E}}
\newcommand{\cF}{{\mathcal F}}
\newcommand{\cH}{{\mathcal H}}
\newcommand{\cM}{{\mathcal M}}
\newcommand{\cO}{{\mathcal O}}
\newcommand{\cP}{{\mathscr P}}
\newcommand{\cS}{{\mathscr S}}
\newcommand{\cT}{{\mathscr T}}
\newcommand{\cX}{{\mathcal X}}
\renewcommand{\sp}[1]{{\mathrm{Spec}}}
\newcommand{\scr}[1]{\mathcal {#1}}
\renewcommand{\l}{\lambda}
\renewcommand{\o}{\omega}
\newcommand{\sg}{\mathrm{Sing}}
\newcommand{\ma}{\mathrm{Max}}
\renewcommand{\max}{\mathrm{max} }
\theoremstyle{plain}
\newtheorem{thm}{Theorem}[section]
\newtheorem{lem}[thm]{Lemma}
\newtheorem{pro}[thm]{Proposition}
\theoremstyle{definition}
\newtheorem{Def}[thm]{Definition}
\newtheorem{rem}[thm]{Remark}
\newtheorem{exa}[thm]{Example}
\newtheorem{voi}[thm]{}
\begin{document}

\bigskip 
\title[Algorithmic equiresolution of deformations]{Algorithmic equiresolution of deformations of embedded varieties}

\author{Augusto Nobile}

\address{Louisiana State University \\
Department of Mathematics \\
Baton Rouge, LA 70803, USA}

\subjclass[2000]{14B05, 14E15, 14E99,14D99}

\keywords{Resolution of singularities, simultaneous resolution, deformation}

\email{nobile@math.lsu.edu}
\maketitle

\section{Introduction}
\label{S:intro}

The fact that an arbitrary algebraic variety $X$, over a field of characteristic zero,
 admits a resolution (that is, a proper birational morphism $f:X' \to X$, with $X'$ regular) was first established by H. Hironaka in his celebrated 1964 article \cite{Hir}.

 Once resolution is available for a single variety, a natural question is to try to simultaneously resolve all the members of a given family, say parameterized by a variety $T$, a process that might be called {\it equiresolution}. It is clear that this is not always possible: to succeed means that, in some sense,  the singularities  of the different members are not too different. Moreover, there are many conditions that can be imposed to  a simultaneous resolution process, which lead to different notions (see \cite{T}, written in the late 1970's). Another possible approach, using more recent developments, involves 
 {\it algorithmic resolutions}. These theories arose as  an attempt to improve and better understand the article \cite{Hir}.
 
In \cite{Hir} the resolution is achieved by means of a finite sequence of blowing-ups, or monoidal transforms, each with a regular center contained in the singular set (and satisfying some further technical conditions, like normal flatness). But the proof, aside from being very complicated, is existential in the sense that it is not made very clear how to choose each center. For many years there was no improvement upon the original presentation. But starting in  the late 1980's several authors
 (basing their work, in part, on some pioneering contributions of Hironaka himself, e.g., 
 \cite{Hiro}) 
  obtained ``algorithmic resolution theorems'', i.e., processes of resolution where one specifies the center to choose each time we blow-up. See, e.g., \cite{V1}, \cite{EV}, \cite{BEV}, \cite{BM}, \cite{EH}, \cite{Kol}, \cite{W}. This approach not only simplifies the original proof of \cite{Hir}, but affords some additional results. For instance, one obtains {\it equivariant} resolutions, i.e., compatible with the action of a group on the variety.
  
  A common feature of all these works is the substitution of the original problem, where one directly deals with algebraic varieties, by another, seemingly more technical one, where one tries to ``improve'' other objects, involving a sheaf of ideals on a regular ambient scheme. More precisely, one deals with {\it basic objects}. We follow the terminology of \cite{BEV}, although the language and  presentation  vary with the different authors. A basic object (over a field $k$) is a four-tuple $B=(W,I,b,E)$ where $W$ is a $k$-smooth variety, $I$ a never-zero sheaf of  
  ${\cO}_{W}$-ideals, $b$ a positive integer and $E$ a collection of smooth divisors of $W$ with normal crossings. One defines a suitable notion of {\it permissible transformation}, which gives us a new basic object, induced   by $B$ on the blowing-up $W'$ of $W$ with an appropriate regular center. Then essentially the goal is to reach, by means of a sequence of permissible transformations, a basic object $B_r=(W_r,I_r,b,E_r)$ where for all $x \in W_r$ the order of 
 the stalk $(I_r)_x$ in ${\cO}_{{W_r},x}$ is less than $b$. Moreover, one should be able to describe each center, say as the set of maximum value of an upper semicontinuous  function taking values in a suitable totally ordered set (depending on the dimension of $W$ only). If this is done in an appropriate way, the solution to this problem easily implies that of resolving varieties, in such a way that  the centers of the blowing-ups used are explicitly described (\cite {BEV}, Section 5). See also, for other procedures, e.g., \cite{BM}, \cite{Ko} or  \cite{W}. 
 
So, returning to the problem of equiresolution, a possible approach is this one: in the presence of a given algorithm of resolution (say, of embedded varieties, i.e., a closed inclusion of varieties $X \subset W$, with $W$ regular), and given a family of such embedded varieties, introduce and study a reasonable notion of algorithmic equiresolution, i.e., a sequence of monoidal transformations of the ambient total space inducing on each fiber the algorithmic resolution process. When the parameter space is smooth (or, at least, reduced), this problem was studied in \cite{EN}, where several proposed definitions (for families of ideals and of embedded varieties) are seen to be equivalent. 
See also \cite{BMM}, section 6.
 
 But there is something unsatisfactory about the restriction to the use of reduced parameter spaces only. In many problems one is naturally lead to  the consideration of families parameterized by non-reduced schemes. For instance, assuming a suitable resolution algorithm has been fixed, an application discussed in \cite{EN} consists of a stratification of the Hilbert scheme of subvarieties of ${\bf P}^n$ with a given Hilbert polynomial, expressing it  as a union of reduced locally closed subschemes, such that the restriction of the universal family to each stratum  is algorithmically equisolvable, and universal with respect to equisolvable families parameterized by reduced schemes. Clearly the ``reduced'' condition is not entirely satisfactory. Indeed,  the Hilbert scheme itself might be non-reduced, moreover many usual techniques to study features such as  smoothness, tangent spaces, etc., involve the consideration of families parametrized by schemes of the form $S= \Spec (A)$, with $A$ an artinian ring. So, it seems natural to study, at least,  the notion of algorithmic equiresolution for families parametrized by $S$ as above (i.e., {\it infinitesimal families} or  {\it infinitesimal deformations} of  embedded varieties).
 
 In this paper the latter problem is investigated. As said, initially we deal with basic objects over an artinian ring $A$ as above (also called $A$- or $S$-basic objects, see Definition \ref{D:bo}). Of course, the main difficulty here is that we have just one fiber (the naturally induced basic object over the only point of $S$), so that to directly compare the algorithmic resolutions of the various fibers makes no sense. Rather, one should try to introduce conditions that make the notion  ``the algorithmic resolution of the (single) fiber evenly spreads along $S=\Spec(A)$'' rigorous. It seems that, essentially, the only tool we have is to require that, in a suitable sense and working with appropriate completions of local rings, the orders of certain series do not change when we reduce the coefficients modulo the maximal ideal of $A$. This is what we attempt to do. 
 
For instance, the most fundamental notion developed along these lines is that of 
{\it permissible $B$-center} (or permissible center relative to $S$), where 
$B=(p:W \to S, I, b,E)$, ($S=\Spec (A)$, $A$ an artinian local ring whose residue field has characteristic zero and $p$ is a smooth morphism) is a basic object over $A$, see \ref{D:bo} for the complete definition. Namely, by controlling certain orders of ideals, one may impose a condition on a closed subscheme $C \subset W$, flat over $S$ which, were $S$ a reduced scheme rather than an infinitesimal one, would induce on each fiber of $p$ (a regular variety over a field) a permissible center, say in the sense of \cite{BEV}. Then it is possible to define a natural notion of {\it permissible transform of $B$ with center $C$}. If we repeat the process of choosing a permissible center and transforming, we get what we call {\it an $A$-permissible sequence of $A$-basic objects}, and this induces a permissible sequence (in the sense of \cite{BEV}) at the level of closed fibers. If this sequence is a resolution of $B^{(0)}$ (the fiber of $B$)   we say that our $A$-permissible sequence is an {\it equiresolution} 
 of the $A$-basic object $B$. (This is done in sections 3 and 4). A natural question is: if an algorithm for resolution of basic objects over fields (of characteristic zero) is given, when will an equiresolution be {\it algorithmic}? 

In \cite{EN} the problem of algorithmic equiresolution of families parametrized by a reduced scheme is studied. There  one works with an arbitrary resolution algorithm satisfying certain conditions (``good algorithms''). So far I do not know how to treat the present deformation situation with this degree of generality. Trying to get experience, I deal with a specific resolution algorithm, namely essentially that of \cite{EV} (or 
 \cite{BEV}, \cite{Ma}, \cite{Cu}). We review the algorithm, for the reader's convenience, in section 2. 
 
 Let us better explain what is our main result. Let $\cA$ be the class of artinian local algebras, whose residue a field has   characteristic zero. If $B^{(0)}$ is a basic object over a characteristic zero field $k$, write $\ell (B)=r$ if 
 $$(1) \quad B^{(0)}={B_0}^{(0)} \leftarrow \cdots \leftarrow {B_r}^{(0)}$$
 is the algorithnmic resolution of $B^{(0)}$. Then we have:
 
\begin{thm}
\label{T:main}
There is a function associating to each $A$-basic object 
$$B=(W \to \Spec A,I,b,E)$$
($A \in \cA$) with fiber $B^{(0)}$ a non-negative integer $e(B) \le \ell(B^{(0)})$ and $A$-permissible centers $C_0, \ldots, C_{e(B)-1}$, where $C_0$ is a $B=B_0$-center, $C_1$ an $A$-center for the transform $B_1$ of $B_0$ with center $C_0$, and so on (thus an $A$-permissible sequence 
$$(2) \quad B=B_0 \leftarrow B_1 \leftarrow \cdots \leftarrow B_{e(B)}$$
is determined), such that: (i) each center $C_i$ induces on the fiber the $i$-th algorithmic center used in the sequence (1) (and hence (2) induces the truncation at level $e(B)$ of (1)), (ii) this correspondence is functorial with respect to  homomorphisms $A \to A'$ and etale morphisms $W' \to W$. 
\end{thm}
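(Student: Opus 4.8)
The strategy is induction on the length $\ell(B^{(0)})$ of the algorithmic resolution of the fiber, building the sequence (2) one step at a time and stopping as soon as the algorithmic center of the fiber cannot be ``lifted'' to an $A$-permissible center. The base case $\ell(B^{(0)})=0$ forces $e(B)=0$ and there is nothing to do. For the inductive step, one must accomplish two things: first, produce a canonical candidate closed subscheme $C_0 \subset W$, flat over $S=\Spec A$, whose fiber $C_0^{(0)}$ is exactly the first algorithmic center of the resolution (1); second, verify that $C_0$ is $A$-permissible in the sense of \ref{D:bo}, i.e.\ that the relevant orders of ideals along $C_0$ do not drop when we pass from $A$ to $k=A/\mathfrak m$. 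If this verification succeeds, we form the permissible transform $B_1$ (whose fiber is $B_1^{(0)}$, the first transform in (1)), observe $\ell(B_1^{(0)})=\ell(B^{(0)})-1$, apply the inductive hypothesis to $B_1$, and prepend $C_0$ to the sequence it produces, setting $e(B)=1+e(B_1)$. If the verification fails, we set $e(B)=0$ and stop. The inequality $e(B)\le\ell(B^{(0)})$ is then automatic, and property (i) holds by construction at every step.

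The construction of the candidate center $C_0$ is where the specific algorithm of section 2 enters. The algorithmic center of $B^{(0)}$ is cut out, inside $W^{(0)}=W\times_S\Spec k$, by the locus of maximal value of the resolution function (the string of orders, $\mathrm{ord}$-type and auxiliary invariants attached to the successive derived/coefficient ideals). The plan is to run the \emph{same} recipe over $A$: form the analogous chain of ideals $I, \Delta(I), \dots$ on $W$ (using relative derivations with respect to $p:W\to S$, which makes sense since $p$ is smooth), take the locus where the relevant orders attain the value prescribed by the fiber computation, and let $C_0$ be the resulting closed subscheme. One then checks, using the smoothness of $p$ and the fact that $A$ is artinian local, that $C_0$ is flat over $S$ and that $C_0^{(0)}$ is the desired algorithmic center — here the nontrivial input is precisely the order-control philosophy emphasized in the introduction: the orders computed over $A$, once reduced mod $\mathfrak m$, must reproduce the fiber orders, which is exactly the kind of ``orders do not change'' statement the permissibility conditions of \ref{D:bo} are designed to capture.

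The functoriality in (ii) should come out of the construction almost for free, provided the candidate center is defined by operations that commute with base change $A\to A'$ and with \'etale localization $W'\to W$: relative derivations, formation of the ideals in the algorithm, and the ``locus of prescribed order'' are all stable under these operations (the \'etale case because \'etale maps preserve orders of ideals in local rings and are compatible with the relative derivation modules; the base-change case because tensoring with $A'$ is exact enough on the flat pieces involved and the fiber over the closed point is unchanged up to the field extension $k\to k'$). One must also check that the \emph{stopping rule} is functorial — i.e.\ that $e(B')=e(B)$ when $B'$ is obtained by such a base change — which reduces to showing that the failure of $A$-permissibility is itself detected by the fiber-level data and hence insensitive to $A\to A'$ and $W'\to W$; this is plausible because the obstruction is a discrepancy between an order over $A$ and the corresponding order over $k$, a condition that pulls back correctly.

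The main obstacle I expect is the permissibility verification for the candidate center $C_0$, and more precisely making the passage between the algorithm (which is phrased in terms of ideals and their orders over a field, with all the bookkeeping of derived ideals, companion/coefficient ideals, and the induction on dimension) and the $A$-relative setup of \ref{D:bo} genuinely compatible. Over a field the algorithm's invariants are controlled by orders of completed stalks; over $A$ one works with completions of local rings of $W$ along points of $C_0$, and one needs that the series-theoretic orders appearing there, after reduction mod $\mathfrak m$, agree with the fiber orders \emph{and} that the companion-ideal construction (which involves choosing hypersurfaces of maximal contact, taking restrictions, and iterating) can be carried out $S$-relatively without the orders degenerating. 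This is the step where the artinian, characteristic-zero hypotheses are really used, and where the bulk of the technical work — presumably the content of sections 3 and 4 — will lie.
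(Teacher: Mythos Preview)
Your high-level outline (lift the first center, transform, recurse) matches the paper's organizing scheme, but there is a genuine gap at the step you flag as ``run the same recipe over $A$'' to produce the candidate center $C_0$. The algorithm's center is \emph{not} globally cut out by a chain of ideals built from relative derivations alone: in the inductive step (\ref{V:ag7}($\beta$)) one must pass, locally on an amenable open, to an adapted hypersurface $Z$ and replace $B$ by the inductive object $B^*=(\cH(B''))_Z$. Over a field the resulting local center is independent of the choice of $Z$, but over $A$ this is precisely where things break: Examples \ref{E:accidenti} and \ref{E:accidento} show that $B$-permissibility and $B_Z$-permissibility are inequivalent, so the usual equivalence arguments (e.g.\ via generalized basic objects, cf.\ \ref{V:ae5.1}) fail. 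The paper resolves this by inserting the homogenization $\cH(B'')$ before restricting to $Z$, and then proving a relative Glueing Lemma (Theorems \ref{T:etal}--\ref{T:etalez} and Lemma \ref{L:pego}) via \'etale neighborhoods and Artin approximation, to show that the locally defined centers agree on overlaps. Your proposal does not see this obstruction or its cure; without it, ``the same recipe over $A$'' does not yield a well-defined global $C_0$.

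A second, smaller problem: your functoriality claim is too strong. You assert that the stopping rule satisfies $e(B')=e(B)$ under base change $A\to A'$, arguing that the obstruction is ``detected by fiber-level data''. This is false: take $A'=k$ the residue field, so $B'=B^{(0)}$ and $e(B')=\ell(B^{(0)})$, while $e(B)$ can be strictly smaller (Example \ref{E:nohay}). The paper only proves, and the theorem only needs, the one-sided statement that validity of condition $\cE_j$ for $B$ implies validity for $B'$ with compatible centers (properties (c),(d) in \ref{V:ae1.0}); this gives $e(B')\ge e(B)$, which is what ``functorial'' means here.
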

 
 The sequence (2) thus obtained is called the {\it partial algorithmic equiresolution of} $B$. When $e(B)=\ell (B^{(0)})$ we say that $B$ is {\it algorithmically equisolvable} and call the corresponding sequence (2) its {\it algorithmic equiresolution}.
 
 The idea to show this is to try to generalize, or ``spread'', the  different steps of  the algorithmic resolution process of the fiber (a basic object over a field) to the $A$-basic object $B$. The number $e(B)$ indicates how far we can go in the process. Of course, often $e(B)<\ell (B^{(0)})$, not any infinitesimal family will be equisolvable.  This generalization is studied in sections 4 though 8. Theorem \ref{T:main} is proved in section 9, where we introduce  conditions which, if valid, insure that centers used in the algorithmic resolution of the fiber $B^{(0)}$ extend over $S=\Spec(A)$, producing the unique centers $C_i$ of the Theorem.
 
 From this theorem similar results for infinitesimal families of ideals or of embedded varieties may be easily obtained.
 
 Actually, the algorithm we use is not strictly that of \cite{EV}. We use a slight variation of this method. The reason is that 
 there are some technical difficulties  to adapt the whole process  of \cite{EV}  to our situation. This occurs at a key inductive step, where a basic object $B=(W,I,b,E)$ is replaced by another $B_Z$, whose underlying scheme is a regular hypersurface $Z$ of $W$, which is only locally defined, not canonically (what many authors call a {\it hypersurface of maximal contact}). Because of this, there is a serious glueing problem to get globally valid statements. In the references above this is addressed by re-doing the theory in a more general set-up, that of {\it generalized basic objects} where, after proving some technical results, the glueing process is easier. It seems difficult  to adapt this approach to the context of  basic objects over an artinian ring $A$. But,  following a suggestion of S. Encinas, by using techniques of Wlodarczyk in 
 \cite{W} ({\it homogenized basic objects}, specially a suitable version of his ``Glueing Lemma''), it is possible to develop a satisfactory theory over $A$, bypassing the use of  generalized basic objects. See  \ref{V:ae5.1} for more details.

 Most of this article deals with basic objects, the transition to families of ideals or embedded varieties, by now standard (see \cite{BEV}, section 5), is explained in section \ref{S:AE}. Of course, geometrically the most interesting case is that of families of varieties, but at present it does not seem possible to discuss this situation without a rather lengthy previous study of families of basic objects (or something similar).  Also at the end of section 9 it is indicated how our conditions $\cE _i$ afford a natural notion of algorithmic equiresolution for families parametrized by arbitrary noetherian schemes. 
 
 In another article we shall compare this notion with others available when the parameter space is reduced (\cite{EN}). In the future we hope to study applications (like those discussed in \cite {EN}, Section 4) as well as the functors on Artin rings that naturally arise (see  \cite{S}).
 
 In an appendix (section \ref{S:A}) we include some rather basic algebraic results, that we could not find in the literature.

 It is my pleasure to thank O. Villamayor and S. Encinas for their help and encouragement, as well as the referee, for useful suggestions to improve this article.

\section{The algorithm}\label{S:AG}

\begin{voi}
\label{V:ag1}
{\it Terminology and conventions} In general, we shall use the notation and language of \cite{H}. We describe next a few  exceptions. If $W$ is a scheme, a $W$-ideal will mean a coherent sheaf of 
${\cO}_W$-ideals.  If $I$ is a $W$-ideal, the symbol $\mathcal V (I)$ will denote the closed subscheme of $W$ defined by $I$. As usual, $V(I)$ will denote the closed subset of the underlying topological space of $W$ of zeroes of $I$. If $Y$ is a closed subscheme of a scheme $W$, the symbol $I(Y)$ denotes the $W$-ideal defining $Y$. An algebraic variety over a field $k$ will be a reduced algebraic $k$-scheme. If $W$ is a reduced scheme, a never-zero $W$-ideal is a $W$-ideal $I$ such that the stalk $I_x$ is not zero for all $x \in W$, in general $I$ is a never-zero ideal of $W$ if $I{\cO}_{W'}$ is never-zero, with $W'=W_{red}$.

The term \emph{local ring} will mean 
 \emph{noetherian local ring}. In  general, the maximal ideal, or radical, of a local ring $R$ will be denoted by $r(R)$. Often, we write $(R,M)$ to denote the local ring $R$ with maximal ideal $M$. The order of an ideal $I$ in the local ring $(A,M)$ is the largest integer $s$ such that $I\subseteq M^s$. If $W$ is a noetherian  scheme, $I$ is a $W$-ideal and $x \in W$, then $\nu _x(I)$ denotes the order of the ideal 
 $I_x$ of ${\cO}_{W,x}$. 
 
 All the varieties we consider in this paper are defined over characteristic zero fields.
 \end{voi}
 \begin{voi}
 \label{V:1.1}
The goal of this article is to study certain questions on simultaneous resolution, or equiresolution, in the presence of an algorithm of resolution of singularities. The most interesting case is that of algebraic varieties, but in our approach it is more convenient to study first, in detail, the analogous problem for a more formal type of objects, the so-called {\it basic objects} (over fields). From this similar results for varieties easily follow. We shall work with a specific algorithm, essentially that developed in   \cite{V1} and \cite{V2} (and presented more sistematically in \cite{EV} or \cite{BEV}). For the reader's convenience, and also to motivate our work in the succeeding sections, we shall briefly review this algorithm. We omit proofs and many details, found  in the references just mentioned or, extended to the more general setting of basic objects over artinian rings, in further sections of the present paper. 

In this section we work with algebraic schemes over a field $k$, of characteristic zero (although something similar can be done in a more general context, see \cite{EN}, 5.11). 
\end{voi}
\begin{voi}
\label{V:ag2} A basic object (over a field $k$, of characteristic zero) is a four-tuple $B=(W,I,b,E)$, where $W$ is a smooth, equidimensional algebraic variety over $k$, $I$ is a never-zero $W$-ideal, $b >0$ is an integer and $E=(H_1, \ldots, H_m)$ is  a sequence of distinct regular hypersurfaces of $W$ (i.e., each $H_i$ is a regular Weil divisor of $W$) with normal crossings (\cite{BEV}, 2.1). 
 The smooth variety $W$ is the {\it underlying scheme} of $B$, denoted by $us(B)$. The dimension of $B$ is the dimension of the variety $us(B)$.

 In \cite{EV} or \cite{BEV}, the notation $(W,(I,b),E)$ is used for a basic object, we drop the inside parenthesis to simplify. 
\end{voi}
\begin{voi}
\label{V:ag3} The singular set $\sg (B)$ of a basic object is 
$\{ x \in W: \nu _{x}(I) \ge b\}$. This  is a closed set of $W$. Indeed, one may introduce an operation $\Delta ^{i}$ on $W$-ideals, $i \ge 1$, so that $\sg (B)=V({\Delta}^{b-1}(I))$. Concerning $\Delta=\Delta ^1$, if $w$ is a closed point of $ W$ and $x_1, \ldots, x_r$ is a regular system of parameters of $R={\cO}_{W,w}$ and $D_i$ is the derivation associated to $x_i$ (the ``partial derivative'' with respect to $x_i$), then ${\Delta}(I)_w$ is the ideal of $R$ generated by $I_w \cup \{D_i f : f \in I_w\}$, ${\Delta}^i$ is defined by iteration. One defines $\Delta(I)$ more intrinsically by means of suitable Fitting ideals of $\Omega_{{\nu(I)}/k}$, $k$ the base field. See \cite{BEV}, section 13, for more details.

The use of differential methods (such as the operators ${\Delta}^j(I)$) was pioneered by Jean Giraud (see \cite{G}). These  have greatly helped to  clarify and simplify the theory of  resolution of singularities. They give us an alternative way to treat Hironaka's notion of maximal contact, and play and important role in algorithmic techniques. 

\end{voi}
\begin{voi}
\label{V:ag4} 
{\it Permissible transformations}. If $B=(W,I,b,E)$, $E=(H_1, \ldots, H_m)$ is a basic object, a  {\it permissible center} for $B$ is a closed subscheme $C \subset W$ having normal crossings with the hypersurfaces $H_i$, $i=1, \ldots, m$ (in particular, $C$ is regular) such that $C \subseteq \sg(B)$. We define the {\it transform} of $B$ with center $C$ as the  basic object 
$B_1=(W_1,I_1,b,E_1))$ where $W_1$ is the blowing-up of $W$ with center $C$ (so, we have a natural morphism 
$W_1 \to W$), $E'=(H'_1, \ldots, H'_m, H_{m+1})$, where $H'_i$ is the strict transform of $H_i$ ($i=1, \ldots m$) and $H_{m+1}$ is the exceptional divisor, finally $I_1:=I(H_{m+1})^{-b}I{\cO}_{W_1}$ (cf. \cite{BEV}, section 3). The process of replacing $B$ by such new basic object $B_1$ is is called the (permissible) {\it transformation} of $B$ with center $C$,   denoted by $B \leftarrow B_1$.
 We'll also write $B_1:=\cT(B,C)$.

If $W_1 \to W$ is as above, we also define the {\it proper transform} of $I$ to $W_1$ as the $W_1$-ideal 
${\cE}^{-a} I {\cO}_{W_1}$, where ${\cE}$ defines the exceptional divisor and the exponent $a$ is as large as possible. This integer is constant along each irreducible component of the center $C$ used, but in general not globally constant.
\end{voi}
\begin{voi}
\label{V:ag5}
A {\it resolution} of the basic object $B$ is a sequence 
$B_0 \leftarrow \cdots \leftarrow B_r$, where each arrow $B_j \leftarrow B_{j+1}$ is a permissible transformation, such that $\sg (B_r)=\emptyset$. 

An algorithm of resolution (for  basic objects in $\cS$) is a rule that associates to each positive integer $d$  a totally ordered set $\Lambda^{(d)}$  and, for any given basic object   $B_{0}=(W_{0},I_{0},b_{0},E_{0})$ of dimension $d$, functions as follows. First, there is an upper semicontinuous function 
$g_0: \sg (B_0) \to {\Lambda}^{(d)}$, (taking finitely many values), such that 
$C_0 = {\rm Max}(g_0) = \{w \in \sg (B_0): g_0(w) ~ {\rm is ~ maximum }\}$ is a permissible center. If $B_1=\cT (B_0,C_0)$ has $\sg (B_1) \not= {\emptyset}$,  a function 
$g_1: \sg (B_1) \to {\Lambda}^{(d)}$ is given, such that $C_1 = {\rm Max}\,(g_1)$ is a well determined $B_1$-permissible center, which we blow-up, and so on. Eventually we get in this way a permissible sequence 
 $B_0\leftarrow B_1 \leftarrow \cdots \leftarrow B_r$.  
We require that this be a resolution, i.e., $\sg (B_r)=\emptyset$. This is called the algorithmic resolution sequence of $B$. Moreover, this should be stable under etale base change, meaning that if $B'$ is the basic object obtained by pull-back under an etale map $W' \to W$, then the pull-back of the resolution sequence above may be identified to the resolution sequence of $B'$ (and the new resolution functions are induced by the original ones) and, similarly, under extension of the base field. 
\end{voi}
\begin{voi}
\label{V:ag6}
In this paper we shall work with a specific algorithm, a variant of that of \cite{BEV} or \cite{EV} (with some elements from \cite{W}). We shall refer to it as the VW-algorithm. In its  construction, the following auxiliary notions are important.

\smallskip

{\noindent (i) {\it the functions ${\omega}_r$ and $t_r$}.} If 
$$(1) \qquad B_0 \leftarrow \cdots \leftarrow B_r$$
 is a sequence of basic objects and permissible transformations (where we write 
$B_j = (W_j, I_j, b, E_j)$, for all $j$), we define, for $x \in \sg (B_r)$, 
$\omega _r (x) :=   \nu _x ({\bar {I_r}}) / b$ (where ${\bar {I_r}}$ denotes  the proper transform of $I_0$ to $W_r$). It can be proved that if 
in our sequence the center of each transformation is contained in 
${\mathrm {Max}} ({\omega}_j)$ (the set of points where ${\omega}_j$ reaches its maximum value 
${\mathrm {max}} ({\omega}_j)$) then 
${\mathrm {max}} ({\o}_{j-1}) \geq {\mathrm max} ({\o}_j)$, for $j < r$. Such a sequence will be called $\o$-permissible. 

The functions $t_r$  are defined by induction on the length $r$ of a 
$\o$-permissible sequence as above.  If $r=0$, for $x \in \sg (B_0)$ we write 
$t_0(x) = (\omega _0 (x), n_0(x))$, where $n_0(x)$ is the number of hypersurfaces in $E_0$ containing $x$. Assume that $t_j= (\omega _j, n_j)$ was defined (on $\sg (B_j)$) for $j < r$ and that in our sequence (1) is $t$-permissible, i.e., that each center $C_i$ used in the blowing-ups is contained in the subset of $\sg (B_i)$ where $t_i$ reaches its maximum value (in particular then (1) is $\o$-permissible). 
Let $s$ be the smallest index such that 
${\mathrm {max}}(\o  _s) = {\mathrm {max}}(\o _r)$ and $E^{-}_r$ the collection of the hypersurfaces in $E_r$ which are strict transforms of those in $E_s$. Then, for 
$x \in \sg(B_r)$ we set: $t_r(x) = (\omega _r(x), n_r(x))$, where $n_r(x)$ is the number of hypersurfaces in $E^{-}_r$ containing $x$. A $B_r$-center which is contained in 
Max($t_r$) will be called $t$-permissible. It can be proved that in a $t$-permissible sequence the sequence ${\mathrm {max}} (t_j)$ is non-increasing.

\smallskip

(ii)  {\it Monomial objects.} A basic  object
$B=(W,I,b,E)$, where $E=(H_1,\ldots, H_m)$  is 
 {\it monomial} if for each $w \in W$ we have:
$$I_w = I(H_1) ^{\alpha _1 (w)} \ldots I(H_m) ^{\alpha _m (w)}$$ 
with each function 
$\alpha _i: W \to {\bf Z}$
 constant on each irreducible component of  $H_i$ and zero outside $H_i$. If $B$ is monomial, one may define (using combinatorial techniques) a function 
$\Gamma _B $ from $\sg (B)$ to
${\bf Z} \times {\bf Q} \times {\bf Z}^{\bf N}$
 which is upper-semicontinuous (when the target is lexicographically ordered). 
 Then, it turns out that $C:={\mathrm {Max}}(\Gamma_B)$ is a permissible center  and that, if $B_1={\cT}(B,C)$,   
${\mathrm {max}}( \Gamma _{B_1}) < {\mathrm {max}}( \Gamma _{B})$. This center $C$ is the intersection of certain hypersurfaces in $E$, which are explicitly determined from $\Gamma _B$ 
  (see \cite{BEV}, section 20, or \ref{V:det4.0} of this paper).

\end{voi}
\begin{voi}
\label{V:ag7}
Now we briefly describe our resolution algorithm, that will be called {\it the VW-resolution algorithm}.  

($\alpha$) For each integer $d \geq 1$ we must indicate a totally ordered set $\Lambda^{(d)}$  and, for any given basic object $B_{0}=(W_{0},I_{0},b_{0},E_{0})$ over $k$  of dimension $d$,  the corresponding resolution functions $g_j$.
 
This process will be defined inductively on the dimension of $B_0$, as follows. In the sequel, 
$\cS _1:= {\bf Q}\times {\bf Z}$ and $\cS _2:= {\bf Z}\times {\bf Q}\times {\bf Z}^{\bf N}$, in all cases lexicographically ordered.

If ${\rm dim} (B_0) = 1$, let $\Lambda ^{(1)}= \cS _1 \cup \cS _2 \cup  \{\infty _1\}$, where if $a \in \cS _2 $ and $b \in \cS _1 $ then $a >b$  and $\infty _1$ is the largest element of the set. Then we define  for 
$w \in \sg (B_0)$, $g_0(w)= t_0(w)$  and, if $g_i$ is defined for $i < s$, determining a permissible sequence 
$B_0\leftarrow B_1 \leftarrow \cdots \leftarrow B_s$ we define, for $w \in 
\sg(B_s)$, $g_s(w)=t_s(w)$ if $\o _s(w) > 0$ and $g_s(w)=\Gamma _{B_s} (w)$ otherwise.

In the induction step we need the following  auxiliary construction.

\smallskip
  
($\beta$) {\noindent {\it  Inductive step}}. Assume that we have an algorithm of resolution  defined for basic objects of dimension $< d$. 
Consider a $t$-permissible sequence of basic objects and transformations 
$$ (1) \qquad B_0 \leftarrow B_1 \leftarrow \cdots \leftarrow B_s $$ 
Let $w \in \ma (t_s)$ and suppose that, near $w$,  $\dim \ma(t_s) \le d-2$. 
   Then there is an open neighborhood $U$ of $w$ (in $W_s=us(B_s)$), a hypersurface $Z_s$ on $U$, containing $w$, and a basic object 
${B_s}^*=(Z_s, {I_s}^*, {b_s}^*,{E_s}^*)$, having the following properties:

\smallskip
 
 (i) $\sg ({B_s}^*) = {\rm Max} \, ({t_s} _{|U})$. 
 
 \smallskip

(ii) The algorithmic resolution sequence corresponding (by the induction hypothesis) to ${B_s}^*$:
 $$ (2) \qquad {B_s}^* \leftarrow ({B_s}^*)_{1} \leftarrow \cdots \leftarrow 
 ({B_s}^*)_{p}$$ 
(determined, say, by resolution functions $\widetilde{g}_i$) 
 induces a $t$-permissible sequence 
 $$ (3) \qquad  {\widetilde B_s} \leftarrow  {\widetilde B_{s+1}} \leftarrow 
 {\widetilde B_{s+p}}             $$ 
 (obtained by using the same centers $C_i=\ma (\widetilde{g}_i)$, and denoting by  ${\widetilde B_s}$ the restriction of $B_s$ to $U$).
 
 \smallskip
 
 (iii) If 
 ${\rm max} (t _s)={\rm max} (t _{s+j})$ ($j=1, \ldots,  p$) then,  
  for all such indices $j$,  
 $us(({{B_{s}}^*})_j)$ gets identified to $Z_{s+j}$, the strict transform of $Z_s$ to $us({\widetilde B}_{s+j})$ and 
$\sg(({{B_{s}}^*})_j)= {\rm Max} (\widetilde{t_{r+j}})$ (where $\widetilde{t_j}$ are the $t$-functions of the sequence (3)). 

\smallskip

(iv) Under the assumption of (iii) for all $j=0, \ldots , p$, if 
$w_j \in {\rm Max}(\widetilde{t_{s+j}})$
 is in the pre-image of $w$ (under the morphism $\widetilde B_{s+j} \to \widetilde B_{s}$ arising from (3)), the resolution function $\widetilde{g_j}$ of ${B_s}^*$ defines a function (still denoted by $\widetilde{g_j}$) on a neighborhood (in ${\rm Max}(\widetilde{t_{s+j}})$) of $w_j$. The process does not uniquely determine the neighborhood $U$ nor the hypersurface $Z_s$, but the value $\widetilde{g_j}(w_j)$ is independent of the choices. 
 
 In \ref{V:ag8} we are going to indicate how to make these constructions. 

 \smallskip

($\gamma$) Now, assuming the resolution functions given for dimension $< d$,  we'll define resolution functions $g_j$ for objects of dimension $d$ as follows. In this case,  
the totally ordered set of values will be: 
 $\Lambda ^{(d)}=(\cS _1 \times {\Lambda}^{(d-1)})\cup \cS _2 \cup \{ \infty  _d\}$, where 
 $\cS _1 \times {\Lambda}^{(d-1)}$ is lexicographically ordered, any element of $\cS _2$ is larger than any element of $\cS _1 \times {\Lambda}^{(d-1)}$ and 
 $\infty _d$ is the largest element.
 
Consider first a single basic object $B_0$. Let $M:=\ma (t_0)$ and $M(1)$ the union of the one-codimensional components of $M$. Given $x \in \sg(B_0)$, necessarily we have $\o _0(x) > 0$ and there 
  are three cases.  
(a) $x \in M(1)$. Then set $g_0(x)=(t_0(x), {\infty}_{d-1})$. 
(b) $x \in M \setminus M(1)$. Then take a neighborhood $U$ of $x$ (in $W$) such that the basic object $B_0^*$ above  is defined 
 and the function 
 $\widetilde {g_0}:Z_0 \to {\Lambda}^{(d-1)}$ as in (iv) above (with $s = j=0$)
 Then set   $g_0(x)=(t_0(x),\widetilde {g_0}(x))$. This value is independent of the choices made. (c) $x \notin M$
 Then set $g_0(x)=(t_0(x),\infty _{d-1})$.

Assume now  that resolutions functions $g_i$, $i=0, \ldots, j-1$ have been defined, determining  centers $C_i = \ma \,{(g_i)}, i=0, \ldots, j-1$\ldots, leading to a permissible sequence $B_0 \leftarrow \cdots \leftarrow B_j$,  $
 B_i=(W_i,I_i,b,E_i)$, $i=0, \ldots, j$, $j \ge 0$. We assume that if $B_{j-1}$ is not a monomial object, then this is a $t$-sequence.
  
  There are two basic cases: (a) $\max (\o _j) =0$, (b) $\max (\o _j) > 0$. 
  
   In case (a), $B_j$ is monomial, for $x \in \sg(B_j)$ let $\Gamma _j$ be its $\Gamma$-function and set $g_j(x):=\Gamma _j (x)$. In case (b), let $M_1(j)$ denote the union of the one-codimensional components of $M(j):= \ma (t_j)$ and $H$ the exceptional divisor of the blowing-up (with center $C_i$) $W_{j-1} \leftarrow W_j$.  For $x \in \sg (B_j)$ there are three  sub-cases: 
   
   ($b_1$) $x \in M_1(t_j) \cap H$. Then we set  $g_j(x)=(t_j(x), {\infty}_{d-1})$                                                         
   
   ($b_2$) $x \in (M(j) \setminus M_1(j))\cap H$. Then consider the smallest index $s$ such that $t_s(x_s)=t_j(x)$, where $x_s$ is the image of $x$ in $\sg (B_s)$ induced by the sequence above. Using 
  the construction of ($\beta$), applied to $x_s \in W_s$, we obtain resolution functions of $B^*_s$, $\widetilde{g_{0}}, \widetilde{g_{1}}, \dots$. Then it makes sense to take $\widetilde{g_{j-s}}(x)$, and it can be proved that this value is well-defined. We set 
 $g_j(x)=(t_j(x),\widetilde{g_{j-s}}(x)) \in \cS _1 \times {\Lambda}^{(d-1)}$. 
 
   ($b_3$)  $ x \notin H$. Then, if $x'$ is the image of $x$ in $W_{j-1}$, set $g_j(x)=g_{j-1}(x')$

 With this definition, if $B_j$ is not monomial  then the center $C_j={\mathrm {Max}}(g_j)$ is contained in ${\mathrm {Max}}(t_j)$.  
 
 It can be proved that the sequence $\{  {\mathrm {max}}(g_j)       \}$ is strictly decreasing, which  leads to a resolution of $B$ (\cite{BEV} or \cite{EV}).
\end{voi} 
\begin{voi}
\label{V:ag8}
We shall better explain some details of this process, specially the crucial inductive step of \ref{V:ag7}. For this, we must review some other concepts. 

\smallskip

($\alpha$) {\it Adapted hypersurfaces, nice objects}. 
 A hypersurface $Z \subset W$ 
is {\it adapted} to $B$ (or  $Z$ is {\it $B$-adapted}) if
 the following conditions hold:
 (A1) $I(Z) \subseteq \Delta ^{b-1} (I)$ (an inclusion of sheaves of
${\cO}_W$-ideals),
 (A2) $Z$ is transversal to $E$ (see \ref{V:is1.0}). If, moreover: 
(A3) ``Whenever $D$ (resp. $D'$) is an irreducible component of $Z$ (resp. of
$V({\Delta}^{b-1}(I))$) then $D \not= D'$'' holds, we say that $Z$ is inductive. 

\smallskip
We shall say that $B$ (a basic object in $\cS$) is {\it nice} if either
Sing($B$) is empty or $B$ admits an adapted hypersurface. An adapted hypersurface is necessarily regular.

\smallskip

($\beta$) {\it Inductive objects} 
If  $B$ is a nice basic object 
  we define a $W$-ideal, called the {\it coefficient ideal} and denoted by
${\cC}(I)$, as follows:
 $${\cC}(I) : = \sum_{i=0}^{b-1} \, [{\Delta}^i(I)]^{b!/b-i}  $$
 If $Z$ a $B$-inductive hypersurface, then 
 the {\it coefficient ideal relative to $Z$}, or the 
$Z$-{\it coefficient ideal}, denoted by
${\cC}(I,Z)$, is the restriction of  
${\cC}(I)$ to $Z$. This is a never-zero $Z$-ideal.

The basic object 
$B_Z := (Z , {\cC}(I,Z), b!, E_Z)$, 
$E_Z = ( H_1 \cap Z,\ldots, H_m \cap Z )$, 
is called the {\it inductive object of $B$, relative to the inductive hypersurface $Z$}.
\smallskip

$(\gamma)$ Consider a $t$-permissible sequence of basic objects and transformations 
$B_0 \leftarrow \cdots \leftarrow B_r$ (where $(B_j=(W_j,I_j, b,E_j))$ and a point $w \in W_r$ such that, near $w$, 
$M_w=\{ x : t_r(x)=t_r(w)  \}$ has codimension (in $W_r$) $>1$. Then there is a nice object $B_r  ''$, defined on a suitable neighborhood $U$ of $w$ in $W_r$, admitting an inductive hypersurface $Z$, such that $\sg (B'' _r) = M_w$. The definition and properties of this object are presented in \cite{EV}, 9.5 or, in a more general context, in section 8 of this paper.

($\delta$) {\it Homogenized ideals and objects}. 
If $W$ is a variety, a $W$-{\it weighted ideal} is a pair $(I,b)$, where $I \subset {\cO}_W$ is $W$-ideal  and $b$ is a non-negative integer. 

 The associated homogenized ideal of $(I,b)$ is the the $W$-ideal 
$$(1) \qquad \cH (I,b)= I + \Delta (I) T(I) + \cdots +  \Delta ^i(I) T(I) ^i + \cdots + \Delta ^{b-1}(I) T(I)^{b-1}$$ 
where we have written $T(I):= {\Delta}^{b-1}(I)$ (see \cite{W}, section 2).

If $B=(W , I,b, E)$ is a  basic object in $\cS$, the basic object  ${\cH}(B):=(W, \cH (I,b), b,E)$ is the homogenized object associated to $B$. This is discussed in detail (in a more general setting) in section \ref{S:HI}.
\end{voi}
\begin{voi}
\label{V:ag9} 
We return to the discussion of the VW-resolution algorithm. In the notation of \ref{V:ag7}, we take  as the open set $U$ a neighborhood of $w$ over which the  nice object $B_s''$ of \ref{V:ag8} ($\gamma$) is defined. Then its associated homogenized object  $\cH ({B_s}''):=(HB_s'')$  is again nice and it admits an adapted hypersurface $Z_s$ containing $w$, defined on $U$. This will be the $Z_s$ of 
 \ref{V:ag7}.  Our object $B^*_s$ of \ref{V:ag7} ($\beta$) will be  $({\cH}(B''_s))_{Z_s}$. 
 
 In \ref{V:an2.0} of this paper we shall check that properties (i)-(iv) of \ref{V:ag7} ($\beta$) are valid.
\end{voi}
 \begin{voi}
 \label{V:ag10} The algorithm discussed in \cite{EV} or \cite{BEV} is very similar to the VW-algorithm just described. It proceeds as in \ref{V:ag7}, the only difference is that in the inductive step the auxiliary object $B^*_s$  now is 
 $(B_s'')_{Z_s}$ rather than $({\cH}(B''_s))_{Z_s}$. This looks simpler, however with this approach it is more difficult to check that the process is independent of the choice of the adapted hypersurfaces $Z_s$ we choose (as mentioned in the introduction, see also \ref{V:ae5.1}).
 
 Actually, it can be proved that both the VW-algorithm and that of \cite{EV}  are  the same, in the sense that the resolution functions in either case coincide. We shall  not check this fact in this paper.
 
 We want to emphasize that we do not claim that the VW-algorithm (or that  of \cite{EV}) and that of \cite{W} are the same. They are not, as is proved in 
 \cite{BMM} Remark 6.16. The VW-algorithm is not that of \cite{W}, but essentially that of \cite{EV}, with some details changed by using some concepts from \cite{W}.
\end{voi}
\begin{voi}
\label{V:ag10.1} The algorithms just discussed enjoy some important additional properties. For instance, they are functorial with respect to etale morphisms $W' \to W$ (where $ W=us(B)$, $B$ a basic object) and extension of the base field. For the precise statements and a proof see \cite{EV} or \cite{BEV}.
\end{voi}\begin{voi}
 \label{V:ag11} In the following sections (3 through 8) we attempt to generalize, as much as possible, the theory just described to the case where we do not work over a base field $k$ but rather a (suitable) artinian ring $A$. Our point of view is to regard a basic object defined over $A$ (see \ref{D:bo}) as an infinitesimal family of basic objects, or an infinitesimal deformation of the only (closed) fiber that we have. In our approach, when adapting a  given notion is adapted to this situation, the intuitive idea is that it applies to the fiber and it ``spreads well'' along the (infinitesimal) parameter space $\Spec (A)$. This won't be always possible. When the whole resolution process of the fiber can be extended to an object over $A$ we'll say that we achieved algorithmic equiresolution. 
 The extension of this theory to the mentioned relative situation sometimes is straightforward, sometimes not. In general, when the translation to this relative situation is simple we shall omit the proofs or certain details.
 
 Concerning the algorithm of resolution, working over an artinian ring $A$ the resolution functions $g_i$ of \ref{V:ag5} do not seem very useful. Indeed, these functions  are really defined on the underlying topological space $W$ (notation of \ref{V:ag2}), or a subspace thereof. If $W$ is a scheme over $A$ (artinian), $W$ and the fiber share the same topological space. Hence, for our purposes the VW-algorithm should be regarded as a rule selecting the appropriate centers $C_0, C_1, \ldots $ involved in the algorithmic resolution sequence of \ref{V:ag5}.  This is our point of view.
 
\end{voi}

\section{Basic Notions}\label{S:BN}

\begin{voi}
\label{V:bn1.0}
In general, we shall use the notation and terminology introduced in \ref{V:ag1}. In addition,   
 throughout, the symbol ${\mathcal A}$ will denote the collection of artinian local rings $(A,M)$ such that the residue field $k=A/{M}$ has characteristic zero. Since such a ring is necessarily complete and equicharacteristic, $A$ will contain a (unique) {\it field of representatives}, i.e.  a subfield mapping onto $k$ via the canonical homomorphism $A \to A/{M}=k$. Thus, $A$ is automatically a $k$-algebra.  If $A \in {\mathcal A}$ we'll usually write $S=\Spec (A)$.
\end{voi}
\begin{voi}
\label{V:bn2.0}
 If $p:W \to S$ is a smooth morphism, $S=\Spec(A)$, the fiber (or closed fiber) of $p$ is the fiber over the only point of $S$ (regarded as a closed subscheme of $S$).  It is denoted it by $W^{(0)}$. Then $W^{(0)}=W_{red}$ and this  is an algebraic variety, smooth over the field $A/M$. If $I \subset {\cO}_{W}$ is a $W$-ideal, 
 $I^{(0)}:=I{\cO}_{W^{(0)}}$ is called the fiber of $I$.

\smallskip

A hypersurface on $W$ over $S$, or an $S$-hypersurface, or an $A$-hypersurface, is a positive Cartier divisor $H$, flat over $S$, inducing over the fiber $W^{(0)}$  a regular codimension one subscheme $H^{(0)}$. Thus, the ideal $I(H)_y$ (the stalk at $y$ of the ideal sheaf defining  $H \subset W$) is defined by a single element, inducing at ${\cO}_{W^{(0)},y}$ an element $a$  of order one. This element $a$ is a non-zero divisor of ${\cO}_{W,x}$. This is a consequence of Theorem 23.2 (page 179) in \cite{M} (or use \ref{L:regse}, witn $n=1$). It can be proved that the naturally induced projection morphism $H \to S$ is smooth (see \ref{L:regse}).
\end{voi}
\begin{voi} 
\label{V:bn3.0}
Let $p:W \to S$  be as in \ref{V:bn1.0}, $w$ a point of $W$, $R={\cO}_{W,w}$,
$R'={\cO}_{W^{(0)},w}$ (the local ring of the special fiber at $w$, which is regular.) A system of elements $a_1, \ldots, a_n$ in $R$ is called a regular system of parameters of $R$ relative to $p$, or simply (if the morphism $p$ is clear) an $A$-regular (or $S$-regular) system of parameters  if the induced elements  $a_1^{(0)}, \ldots,  a_n^{(0)}$ in $R'$ form a regular system of parameters, in the usual sense. Elements $a_1, \ldots, a_r$ of $R$ are
{\it part of an $A$-regular system of parameters}, or a
{\it partial  $A$-regular system of parameters}, if they are contained in an  $A$-regular system of parameters $a_1, \ldots, a_n$, $r \leq n$, of $R$. Then necessarily 
$a_1, \ldots, a_n$ is a regular sequence in the local ring ${\cO}_{W,w}$ (see \ref{L:regse}, or use the result in 
\cite{M} cited at the end of \ref{V:bn2.0} and induction).

\end{voi}
\begin{voi} 
\label{V:bn4.0}
An collection $E = \{H_1, \ldots, H_m \}$ of  $S$-hypersurfaces of $W$ is said to have normal crossings if for all points $w \in W$ there is an $A$-regular system of parameters $a_1, \ldots, a_n$ of
${\cO}_{W,w}$ such that $H_1 \cup \cdots \cup H_m$ is defined at $w$ by a product of elements $a_i$, without multiple factors. This product might be empty, hence $=1$ (this happens when no hypersurface $H_i$ contains $w$.)

\smallskip
A subscheme $C$ of $W$ (with defining sheaf of ideals $I(C)$) is said to have normal crossings with $E$ over $S$ (or relative to $S$) if
$E$ has normal crossings and for all points $w \in C$, there 
is an $A$-regular system of parameters  
$a_1, \ldots, a_n$ in ${\cO}_{W,w}$ such that
 the stalk $I(C)_w$ is generated by ($a_1, \ldots, a_r) {\cO}_{W,w}$ (for some $r \leq n$) and for each
$H_i$ containing $w$, the ideal  $I(H_i)_w$ is generated by a suitable element $a_{j}$, $j \in \{1, \ldots, n \}$.

It can be proved  that the induced projection $C \to S$ is smooth, and  the
blowing-up $W_1$ of $W$ with center $C$ is also $S$-smooth (see \ref{L:regse}).

\end{voi}
\begin{voi} 
\label{V:bn5.0}

An $S$-pair (or $A$-pair) is an ordered pair $(p: W \to S, E)$, with $p$ smooth and $E = ( H_1, \ldots, H_m )$ an ordered $m$-tuple  of distinct $S$-hypersurfaces on $W$ having normal crossings. We call $W$ the {\it scheme of the pair} $(p,E)$. When $A$ is  a field which should be clear from the context (e.g., when taking a closed fiber), often we shall write $(W,E)$ for the pair.

\smallskip
A  {\it permissible center}
for the pair $(p, E)$ (as above) is a subscheme $C$ of $W$,  having normal crossings with $E$. Then $C$ is automatically smooth over $S$ (\ref{V:bn4.0}).
\end{voi}
\begin{voi} 
\label{V:bn6.0}
We use the notation of \ref{V:ag1}. Let $B=(p:W \to S, E)$ be a pair over  $S$, $I$ a 
 never zero $W$-ideal and 
 $C \subset W$ be an irreducible $S$-permissible center for the pair $(p,E)$, defined by the $W$-ideal
$J \subset {\cO}_W$,  with generic point  $y \in C$.

We shall say that the order of $I$ along
$C$  is $\geq m$,
written
$\nu (I,C) \geq m$, if (in the local ring ${\cO}_{W,y}$) we have
$I_y \subseteq (J_y) ^m$.

Finally, we write
$\nu (I,C) = m$ if $m$ is the largest integer such that $\nu (I,C) \geq m$.

\end{voi}
\begin{exa}
\label{E:or}
Let 
$W={\bf A}^2_A=\Spec A[x,y]$ with $A=k[\epsilon]=k[t]/(t^2)$ (and, say, $k= {\bf C}$), $S=\Spec A$, $W \to S$ the natural projection,
$I=(\epsilon x + y^2 +x^3)A[x,y]$, $E=\emptyset$, $C=V(x,y)$, $w$ the ``origin'', i.e. the maximal ideal $(\epsilon,x,y)A[x,y],~ b=2$. Then $C$ is $S$-permissible and $I_w$ has order $2$, but $\nu (I,C)=1$.
\end{exa}
\begin{voi}
\label{V:bn7.0} 

Let $W \to S = {\rm Spec}\,A$ (with $A \in {\mathcal A}$) be a smooth morphism, $w \in W$, $a_1, \ldots a_r$ elements of $R:={\cO}_{W,w}$ which are part of an $A$-regular system of parameters, 
${J}=(a_1, \ldots, a_r)R$, $R'=R/J$. Then, 
the completion $R^*$ of $R$ with respect to $J$ is isomorphic to the power series ring in $r$ variables
$ R'[[x_1, \ldots, x_r]]$, where $x_i$ corresponds to $a_i$, for all $i$ (see \ref{P:preries}). 

\smallskip

Now, with the notation of \ref{V:bn6.0},
let $w$ be the generic point of the center $C$ and ${J}=I(C)_w$. Then, by 
definition of permissible center, there is a sequence    $a_1, \ldots , a_r$ which is  part of an $A$-regular system of parameters (see \ref{V:bn3.0}) and generates $J$. Then, again with the notation of \ref{V:bn6.0}, 
$\nu (I,C) = m$ if and only if
 each $f \in I_w$, when regarded  as an element of the completion 
$R^*=R'[[x_1, \ldots, x_n]]$,  $R'={\cO}_{C,w}$, can be written as a power series in
$x_1, \ldots, x_r$ of order $\geq m$, with coefficients in 
$ R'$, and for some $f$ that order will be {\it exactly} $m$.
\end{voi}
\begin{voi}
\label{V:bn8.0}
The notions of \ref{V:bn6.0} can be described  in a more global way as follows. (cf. \cite{EV} or \cite{BEV}, section 13).  We retain the assumptions and notation of \ref{V:bn6.0}.
Let $J$ be a $W$-ideal. Define $\Delta (J/S):=J + {\cF}_{d-1}(\Omega _{Y/S})$, where $Y:={\mathcal V}(J)$ and
${\cF}_{d-1}$ denotes the $(d-1)$-Fitting ideal. Then, using well known properties of the objects involved and the remarks in \ref{V:bn7.0}, the following facts are easily verified.

If $w \in C$ is a closed point the stalk $I(C)_w$ is generated by an $A$-regular sequence  
$a_1, \ldots, a_n$ of ${\cO}_{W,w}$, let  $R^*=A'[[x_1, \ldots,x_n]]$ (with $A'={\cO}_{W,w}/(a_1,\ldots,a_n)$) denote the completion of $\cO _{W,w}$ with respect to the ideal $(a_1, \ldots, a_n)$. Then  
$ {\Delta (J/S)}_w R^*$ is the ideal generated by 
 the elements $f \in J_w$ and the partials $\partial f / \partial x_i$,
$i=1, \ldots, n$, for all $f \in J_w$. 

We may iterate this construction, getting $W$-ideals
 $\Delta ^i (J/S)$, $i=1, 2, \ldots$, such that 
$ \Delta ^i (J/S)R^*$ is the ideal generated by elements of $I$ and their partial derivatives of order $\leq i$.
\end{voi}
The connection between this object and the notion of \ref{V:bn6.0} is given in the following result.
\begin{pro}
\label{P:ekiv}
If $(W \to S,E)$ is an $S$-pair, $C$ is an irreducible permissible center, with generic point $y$ and defining ideal $I(C)=J$, and $I$ a never-zero $W$-ideal, then the following statements are equivalent:

(i) $I_y \subseteq (J_y)^b$ (i.e., $\nu(I,C)\ge b$).

(ii) $I_x \subseteq (J_x)^b$ for every closed point $x$ in a dense open subset of $C$ 
 (i.e.,  a non-empty subset of $C$).

(iii) ${\Delta}^{b-1}(I/S)_x \subseteq J_x$ for every closed point $x$ in a dense open subset of $C$ (say, for $x$ in $U \cap C$, $U$  a  suitable open set of $W$.)

(iv) ${\Delta}^{b-1}(I/S) \subseteq J$
\end{pro}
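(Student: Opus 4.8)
The plan is to prove the cycle of implications $(i) \Rightarrow (iv) \Rightarrow (iii) \Rightarrow (ii) \Rightarrow (i)$, reducing everything to the local statement of \ref{V:bn7.0} after passing to suitable completions. The key observation, used repeatedly, is that $\Delta^{b-1}(I/S)$ commutes with the completion operations of \ref{V:bn7.0} and \ref{V:bn8.0}: if $w\in C$ and $R^*=R'[[x_1,\dots,x_r]]$ is the completion of $R=\cO_{W,w}$ along $J_w=(a_1,\dots,a_r)$, with the $a_i$ part of an $A$-regular system of parameters, then writing $f\in I_w$ as a power series $\sum c_\alpha x^\alpha$ with $c_\alpha\in R'$, the order of $f$ in $x_1,\dots,x_r$ is $\ge b$ if and only if all partial derivatives $\partial^\beta f/\partial x^\beta$ with $|\beta|\le b-1$ lie in $(x_1,\dots,x_r)R^*$, i.e.\ in $JR^*$. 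This is the (characteristic zero) differential criterion for membership in a power of an ideal, and it is exactly what makes $\Delta^{b-1}(I/S)_w R^*\subseteq J_wR^*$ equivalent to $I$ having order $\ge b$ along $C$ at $w$.

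First I would do $(i)\Rightarrow(iv)$. Since the generic point $y$ of $C$ specializes to every point of $C$, and $\Delta^{b-1}(I/S)$ and $J$ are coherent, it suffices to check the inclusion of stalks at each closed point $x\in C$ (the locus where a coherent ideal is contained in another is closed, and it contains $y$, hence all of $C$ if it contains $y$; but more robustly, I will verify it directly at closed points and then note that containing all closed points of $C$ plus the generic point gives the inclusion on an open set — see below). Fix a closed point $x\in C$. By hypothesis $I_y\subseteq J_y^b$ in $\cO_{W,y}$; passing to the completion along $J$ at the generic point, every $f\in I_y$ is a power series of order $\ge b$ in the $x_i$, so its partials of order $\le b-1$ lie in $JR^*$ at $y$, hence (by faithful flatness of $R\to R^*$ and the description of \ref{V:bn8.0}) $\Delta^{b-1}(I/S)_y\subseteq J_y$. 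Then I would use that $C$ is $S$-smooth with $\cO_{C,x}$ regular, $J$ generated by part of an $A$-regular system of parameters, to spread this from $y$ to the closed points of $C$ near $x$ via the completion isomorphism $R^*\cong R'[[x_1,\dots,x_r]]$ of \ref{V:bn7.0} — the coefficients $c_\alpha$ are sections over (an open of) $C$ and vanishing at the generic point forces vanishing on an open dense subset, hence after shrinking on $C$ the relevant partials land in $JR^*$ everywhere, giving $\Delta^{b-1}(I/S)_x\subseteq J_x$ on a dense open $U\cap C$; coherence and irreducibility of $C$ then upgrade this to the global inclusion $(iv)$.

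The implications $(iv)\Rightarrow(iii)$ and $(ii)\Rightarrow(i)$ are the easy ones. For $(iv)\Rightarrow(iii)$: restrict the global inclusion of sheaves to any closed point $x$ in a dense open of $C$ — nothing to prove. For $(ii)\Rightarrow(i)$: the set of points $x$ with $I_x\subseteq J_x^b$ is closed in $C$ (it is $V$ of a coherent ideal defined by the relevant symbolic/ordinary power comparison, or one argues directly with the completion), so if it contains a dense open subset of the irreducible scheme $C$ it contains the generic point $y$, which is $(i)$. The remaining implication $(iii)\Rightarrow(ii)$ is the heart of the matter and the step I expect to be the main obstacle: from $\Delta^{b-1}(I/S)_x\subseteq J_x$ at a closed point $x$ one must recover $I_x\subseteq J_x^b$, i.e.\ one must run the differential criterion \emph{backwards}. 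Here I would pass to $R^*=\cO_{C,x}[[x_1,\dots,x_r]]$, write each $f\in I_x$ as $\sum c_\alpha x^\alpha$, and show by induction on $|\alpha|$ that $c_\alpha=0$ for $|\alpha|<b$: the hypothesis $(iii)$ says every iterated partial $\partial^{|\beta|}f/\partial x^\beta$ with $|\beta|\le b-1$ lies in $JR^*=(x_1,\dots,x_r)R^*$, and in characteristic zero the coefficient $c_\alpha$ is, up to a nonzero rational factor, the constant term of $\partial^{|\alpha|}f/\partial x^\alpha$, which must therefore vanish. The one genuine subtlety is that $\Delta^{b-1}(I/S)$ is built from the Fitting ideal $\cF_{d-1}(\Omega_{Y/S})$ rather than literally from partials, so I must invoke the identification in \ref{V:bn8.0} — namely that after completion $\Delta^i(J/S)R^*$ is exactly the ideal generated by elements of $I$ and their partials of order $\le i$ — to translate the Fitting-ideal statement into the honest differential statement; with that in hand the backward induction is routine, and $(iii)\Rightarrow(ii)$ (indeed $(iii)\Rightarrow(i)$ at once, since it works at $y$ too) follows. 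This closes the cycle.
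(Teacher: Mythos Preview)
Your cycle $(i)\Rightarrow(iv)\Rightarrow(iii)\Rightarrow(ii)\Rightarrow(i)$ is correct in outline, but your emphasis is almost exactly inverted relative to the paper's. The paper declares all implications except $(iii)\Rightarrow(iv)$ to be ``well-known'' and proves only that one. In particular, what you call the ``heart'' --- the differential criterion $(iii)\Rightarrow(ii)$ at a closed point, run backwards via the completion $R^*=R'[[x_1,\dots,x_r]]$ --- is precisely what the paper takes for granted as the content of \ref{V:bn8.0}. Conversely, the step you dispatch in half a sentence (``coherence and irreducibility of $C$ then upgrade this to the global inclusion $(iv)$'') is the one the paper singles out as requiring an actual argument: from $\Delta^{b-1}(I/S)_x\subseteq J_x$ on a dense open of $C$ one cannot conclude the global inclusion $\Delta^{b-1}(I/S)\subseteq J$ just from irreducibility of $C$; the paper uses that $W$ is Cohen--Macaulay, hence $C$ (locally cut out by a regular sequence) is Cohen--Macaulay and therefore has no embedded points, so that $C$ equals the scheme-theoretic closure of $C\cap U$, which gives $C\subseteq \mathcal V(\Delta^{b-1}(I/S))$. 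Your $(i)\Rightarrow(iv)$ argument silently passes through this same closure step, so you should make the C--M / no-embedded-points input explicit.

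One small slip: in your $(ii)\Rightarrow(i)$, the locus $\{x: I_x\subseteq J_x^b\}$ is \emph{open}, not closed (it is the complement of the support of the coherent sheaf $(I+J^b)/J^b$). The argument still works --- an open set meeting the irreducible scheme $C$ contains its generic point $y$ --- but the reasoning should be rephrased accordingly.
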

\begin{proof}

The other implications being well-known, we discuss  the implication $ (iii) \Rightarrow (iv)$. Recall the following basic facts. Here, if $Y$ is a locally closed subscheme of a scheme $W$, $cl(Y)$ denotes the scheme-theoretic closure of $Y$ in $W$.

\smallskip
{\noindent
(a) If $Y$ is a closed subscheme of $W$, $U$ an open of $W$, $Y' = Y \cap U$ then 
$cl(Y') \subseteq Y$.}

\smallskip
{\noindent
(b) In the notation of (a) if, moreover, the closed subscheme $Y$ is irreducible, has no embedded points (e.g., if it is C-M, that is a Cohen-Macaulay scheme) and $U$ and $Y$ have at least one point in common, then $ cl(Y') = Y $ 
}

\smallskip
Now we prove   $ (iii) \Rightarrow (iv)$ as follows.
Let $Y = {\mathcal V}({\Delta}^{b-1}(I/S))$, note that 
${\mathcal V}(I(C))=C$. Observe, moreover, that $W$ is C-M (use the fact that $W$ is smooth over $ S$ which is C-M, now use  the Corollary to Thm. 23.3, page 181 of \cite{M}). Since $C$ is locally defined by a regular sequence, $C$ is also C-M, in particular with no embedded points. Let  $U$ be an open set in $W$ such that the open subset of $C$ in (iii) is of the form $C'=C \cap U$. Let $Y' = Y \cap U $, and $C'= C \cap U$. The assumption of (iii) implies that 
the restriction of 
${\Delta}^{b-1}(I/S)$ to $U$ is contained in the restriction of $I(C)$ to $U$ 
(because  the closed points are dense,  $W_{red}$ being an algebraic scheme over a field). This implies, taking ${\mathcal V}$, that $C' \subseteq Y'$. Taking closure we get $cl(C')\subseteq cl(Y')$. But, by (a) and (b), $cl(C')=C$ and $cl(Y') \subseteq Y$. Thus, $C \subseteq Y$, which is equivalent to (iv).

\end{proof}
\begin{voi}
\label{V:bn9.0}
Note that the notion of $S$-hypersurface given in \ref{V:bn2.0} may be equivalently expressed as follows. A subscheme $H$ of $W$ is an $S$-hypersurface if, for each 
$x \in H$, $I(H)_x$ is generated by an element $a \in {\cO}_{W,x}$ whose image in 
${\cO}_{W^{(0)},x}$ is in $M \backslash {M}^2 $ (with $M= r({\cO}_{W^{(0)},x})$). Indeed, as remarked in 
\ref{V:bn2.0}, the definition given there implies this notion. Conversely, if $H \subset W$ is as indicated, then $H$ is flat over $S$ (see the end of \ref{V:bn4.0}) and, using \ref{V:bn7.0}, the completion of $R={\cO}_{W,x}$ with respect to $(a)$ is isomorphic to $A'[[a]]$ (with $a$ analytically independent over $A'\cong A/(a)$), showing that $a$ is not a zero divisor in ${\cO}_{W,x}$. Hence, $H$ is a relative Cartier divisor over $S$.
\end{voi}
\begin{voi}
\label{V:bn9.1}
Let $(p:W \to S, E)$ be an $S$-pair, $C$ an $S$-permissible center of it. 
If $W'$ is the blowing up of $W$ with center $C$, $p':W' \to S$  the induced morphism  and  $E'=
(H'_1, \ldots, H'_m , H'_{m+1})$,
where $H'_i$ is the strict transform of $H_i$, for $i=1, \ldots, m$ and $H'_{m+1}$ is the exceptional divisor, then  $(p',E')$ is a new $S$-pair, called the {\it transform} of $(p,E)$ with center $C$. The fact that $H'_{m+1}$ is an $A$-hypersurface (in the sense of \ref{V:bn2.0}) is seen by using \ref{V:bn9.0}, the other points are easy. The closed fiber of $(p',E')$ may be identified to the transform of $(W^{(0)},E^{(0)})$ with center $C^{(0)}:=C \cap W^{(0)}$ (use \ref{P:bloui}).

\end{voi}

\section{Basic objects}\label{S:BO}

\begin{Def}
\label{D:bo}
A basic object over $S$ is a four-tuple $B=(p:W \to S, {I}, b, E)$, where 
$(p, E)$ is an $S$-pair, ${I}$ a never-zero $W$-ideal (\ref{V:ag1})
and $b$ is a non-negative integer.  

The pair $(p,E)$ is   the {\it underlying pair} of the basic object $B$ and $W$ is the {\it underlying scheme of the basic object $B$}, denoted by $us(B)$. The integer $b$ is the {\it index} of the $A$-basic object $B$. The dimension of $B$, $\dim\,(B)$, is the dimension of the scheme $us(B)$.

\smallskip
There is a naturally defined notion of the {\it  fiber} of a basic object $B$, usually denoted by $B^{(0)}= (W^{(0)},{I}^{(0)}, b, E^{(0)})$.
We let $\sg \, (B^{(0)}) $ denote the closed set 
$\{ w \in W^{(0)}: \nu _w ({I}^{(0)}) \geq b \}$ and 
$\sg \, (B) :=  \sg \, ({B}^{(0)}) $.
\end{Def}
\begin{voi}
\label{V:bo1.0}
If $B=(p:W \to S, I,b, E)$ is an $S$-basic object, a subscheme $C \subseteq W$ is a {\it permissible center} for $B$ (or $B$-permissible, or just a $B$-center)  if it is a permissible center for its underlying $S$-pair $(p, E)$  and moreover, for each irreducible component $C'$ of $C$ we have, 
letting $C'^{(0)}$ be the  fiber of the natural projection $C' \to S$, that  
$$ (1) \quad \nu (I,C') = \nu (I^{(0)},C'^{(0)}) \geq b $$ 

{\noindent with the notation of \ref{V:bn6.0}.}
\end{voi}

Note that if $A=k$ (a field) then a $B$-permissible center is a permissible center in the sense of \cite{EV}, \cite{BEV} or \ref{V:ag4}. A $B$-permissible center induces a permissible center on the fiber $B^{(0)}$.
\begin{exa}
\label{E:nohay} It is possible to have $A$-basic objects without any permissible centers. This is a simple example.

Let $B=(W \to S,I,2,\emptyset)$, where $S=\Spec (A)$, with $A=k[\epsilon]$ ($k$ a characteristic zero field,   ${\epsilon}^2=0$, ), $W=\Spec (A[x])$ ($x$ an indeterminate ) $I=(x^2,\epsilon x)$. Were $C$ a $B$-permissible center, it should induce the only permissible center of the fiber $B^{(0)}$, namely the origin ${\mathcal V}(x)$. Hence, the ideal $I(C)$ is generated by an element of the form $y=x+h$, $h \in (\epsilon)A[x]$ and  
$A[x]=A[y]$. Then 
$I=
((y-h)^2,{\epsilon}(y-h))
=(y^2,{\epsilon}y)$ and $C={\mathcal V}(y)$.  Now it is clear that 
$\nu(I,C)=1 $ while $\nu(I^{(0)},C^{(0)})=2 $, impossible were $C$  a $B$-permissible center.
\end{exa}
\begin{pro}
\label{P:ceng}
(a) Let $C$ be an irreducible $B$-center, with $B$ as in \ref{D:bo}, $Z \subset W$ an irreducible $S$-hypersurface, both $C$ and $Z$ having a common  point $y$. Assume $I(Z)_y \subseteq I(C)_y$. Then,  
$C \subset Z$.

\smallskip

(b) Assume $C, \,C'$ are irreducible $B$-centers, having a common point $y$, and $I(C)_y = I(C')_y$. Then $C=C'$.

\end{pro}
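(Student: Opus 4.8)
The plan is to derive both statements from the following lemma: \emph{if $C$ is an irreducible $S$-permissible center, with generic point $\eta$, and $J$ is any $W$-ideal with $J_y\subseteq I(C)_y$ for some $y\in C$, then $J\subseteq I(C)$, i.e.\ $C\subseteq\mathcal V(J)$.} Part~(a) is then the case $J=I(Z)$ (so that $C\subseteq\mathcal V(I(Z))=Z$), and part~(b) is the case $J=I(C')$: the lemma gives $I(C')\subseteq I(C)$, and by symmetry $I(C)\subseteq I(C')$, whence $C=C'$. I expect the lemma itself to be the only real obstacle, and within it the point that must be argued with care — because over the non-reduced base $S=\Spec A$ one cannot pass to underlying reduced sets as one would over a field — is that the localization map $\cO_{C,w}\to\cO_{C,\eta}$ is \emph{injective} for every $w\in C$.

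To prove the lemma I would proceed in three moves. First, pass to the generic point: since $y\in C$, the point $\eta$ generizes $y$, so $\cO_{W,\eta}$ is a localization of $\cO_{W,y}$ and the given inclusion localizes to $J_\eta\subseteq I(C)_\eta$ in $\cO_{W,\eta}$. Second, establish the injectivity of $\cO_{C,w}\to\cO_{C,\eta}$: because $C$ is $S$-permissible it is smooth over $S$, and $A$ is artinian, hence Cohen--Macaulay, so $C$ is Cohen--Macaulay — exactly the observation already used in the proof of Proposition~\ref{P:ekiv} — and therefore $\cO_{C,w}$ has no embedded primes; since $C$ is irreducible it has a single minimal prime, the one corresponding to $\eta$, so $\cO_{C,w}$ has a unique associated prime, the set of its zerodivisors is exactly that prime, and localization at it is injective. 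Third, spread out: fix $w\in C$ and a section $s$ of $J$ defined on a neighbourhood $U$ of $w$; since $\eta$ generizes $w$ we have $\eta\in U$, so the germ $s_\eta$ lies in $J_\eta\subseteq I(C)_\eta$, hence the image of $s$ in $\cO_{C,\eta}$ is $0$; by the injectivity just proved, the image of $s$ in $\cO_{C,w}$ is also $0$, i.e.\ $s_w\in I(C)_w$. As such germs generate the coherent ideal $J_w$, we get $J_w\subseteq I(C)_w$ for every $w\in C$, while $J_w\subseteq\cO_{W,w}=I(C)_w$ trivially for $w\notin C$; hence $J\subseteq I(C)$.

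The crux, as indicated, is the second move: the passage to the generic point must be lossless, and this is guaranteed precisely by the Cohen--Macaulayness of $C$, which in turn comes from smoothness over the artinian (hence Cohen--Macaulay) ring $A$. I would also remark that the order condition in the definition of a $B$-center (see~\ref{V:bo1.0}) is never used here — only permissibility for the underlying $S$-pair enters, through the Cohen--Macaulayness it provides — whereas the irreducibility hypotheses on $C$ (and on $C'$) are essential, being what forces the unique associated prime on which the whole argument turns.
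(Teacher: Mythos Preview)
Your proof is correct and rests on the same key point as the paper's: the Cohen--Macaulayness of $C$ (coming from smoothness over the artinian base) ensures that $C$ has no embedded primes. You express this as injectivity of $\cO_{C,w}\to\cO_{C,\eta}$ and then verify $J\subseteq I(C)$ stalkwise; the paper expresses it via scheme-theoretic closure, first spreading the stalk inclusion at $y$ to an open $U$ (so $C\cap U\subseteq Z\cap U$) and then using that for an irreducible scheme without embedded points the closure of any nonempty open part is the whole scheme. These are equivalent formulations of the same fact. Your packaging as a lemma valid for an arbitrary $W$-ideal $J$ is a small gain: the paper's argument for (a) also invokes the Cohen--Macaulayness and irreducibility of $Z$ to obtain $cl(Z\cap U)=Z$, whereas your lemma shows that no hypothesis on $Z$ beyond closedness is needed --- only the Cohen--Macaulay property of $C$ does real work, exactly as you remark at the end.
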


\begin{proof} It is similar to that of \ref{P:ekiv}. (a) Let $cl$ indicate scheme-theoretic closure in $W$. The hypothesis implies that there is an open dense neighborhood $U$ of $y$ in $W$ such that, letting  
$C_1:=C_{|U}, ~ Z_1:=Z_{|U}$, we get   $C_1 \subseteq Z_1$. Hence    $cl(C_1) \subseteq cl(Z_1)$. But since both $C$ and $Z$ are Cohen-Macaulay, hence without non-trivial embedded points, necessarily $cl(C_1) = C, ~ cl (Z_1) = Z$.

\smallskip

(b) Use the same argument as in (a), with $C'$ rather than $Z$, to get 
$C \subseteq C'$. Similarly we get $C' \subseteq C$.
\end{proof}
\begin{voi}
\label{V:bo2.0}  
If $B$ is an $A$-basic object,  $C$ an $A$-permissible center for $B$ and $\pi: W_1 \to W$ the blowing-up of $W$ with center $C$, the sheaf ${I}$ induces several important sheaves of ${\cO}_{W_1}$-ideals. Namely, we have:

\medskip
 \noindent(i) $I'_1={I}{\cO}_{W_1}$ (the {\it total transform} of $I$ to $W_1$),  

\smallskip
\noindent(ii) $I_1:= {\mathcal E}^{-b} I'_1$, where ${\mathcal E}$ defines the exceptional divisor (the {\it controlled transform} of $I$ to $W_1$),

\noindent(iii) $\overline {{I}_1} := {\mathcal E}^{-a}{I'}_1$, with $a$ as large as possible (the {\it proper transform} of ${I}$.) (If $C$ is not connected, the exponent $a$ is constant along $p^{-1}(C')$, for each connected component $C'$ of $C$, but not necessarily globally constant).

\end{voi}

\begin{voi}
\label{V:bo3.0}
Given a $A$-basic object $B$ (as in \ref{V:bo1.0}), the 4-tuple  
${B}_1 := (W_1 \to S,{I}_1,b,E_1)$ (where $I_1$ is the controlled transform of $I$ and 
$E_1$ is defined as in \ref{V:bn5.0}) is a new $A$-basic object,  called the {\it transform} of the basic object ${B}$ with center $C$ (or at $C$). The process of replacing a basic object $B$ by its transform $B_1$ (with a permissible center $C$, as above) will be called the {\it transformation} of $B$ with center $C$, indicated by 
$B \leftarrow B_1$, if $C$ is clear. Sometimes we shall write $\cT (B,C)$ to denote the transform of $B$ with center $C$.

Using the statement at the end of \ref{V:bn9.1}, one may verify that the transform of an $S$-basic object with a $B$-permissible center over $S$ induces (by taking  fibers) the transform of the  fiber $B^{(0)}$ with center $C^{(0)}:=C \cap W^{(0)}$ (notation as in \ref{V:bn9.1}).
 Moreover, 
${\overline {{I}_1}}^{(0)}=
 \overline {{I^{(0)}}_1}$. 
 
 When $A=k=$ a field, the notions of \ref{V:bo2.0} and \ref{V:bo3.0} reduce themselves to those of \ref{V:ag4}.
\end{voi}

\begin{voi}
\label{V:bo4.0}
A permissible sequence of transformations over $S$ is one of the form:
$$ (1) \qquad B_0 \gets \cdots \gets B_r$$ 
where each arrow $B_{i} \leftarrow B_{i+1} $ stands for a transformation of basic objects, with a $B_i$-permissible center, say $C_i \subset W_i=us(B_i)$. Note that, by taking  fibers, such a sequence induces one:
$$\qquad B_0^{(0)} \gets \cdots \gets B_r^{(0)}$$
where the $i$-th arrow is the transformation with permissible center $C_{i-1}^{(0)}\subset us(B_i^{(0)})$, where $C_{i-1}^{(0)}$ is  the restriction of $C_{i-1}$ to the  fiber. 
\smallskip
The sequence obtained from (1) by deleting $B_{j+1}, \ldots, B_r$ and the corresponding arrows is called the $j$-{\it truncation}  of the sequence (1).

\smallskip

The sequence (1) is called an {\it equiresolution} of the basic object $B_0$ if ${\mathrm Sing} \, (B_r) = \emptyset$ 
\end{voi}
\begin{voi}
\label{bo5.0}
{\it  W-equivalence.} Let $B=(W \to S, I, b, E)$ and $B'=(W \to S, J, c, E')$ be basic objects ($S =\sp \, (A) $, $A$ as in \ref{V:bn1.0}) be basic objects over $S$. We shall say that they are {\emph {pre-equivalent}} if the following conditions hold: (0) $C \subset W$ is a $B$-permissible center if and only if it is a $B'$-permissible center, (1) If $B_1$ (resp. $B'_1$) denotes the transform of $B$ (resp. $B'$) with center $C$, then $C_1$ is a $B_1$-permissible center if and only if it is a $B'_1$ permissible center, consider the transforms with center $C_1$, ..., ($n$) if we repeat this process $n$ times ($n$ any natural number), obtaining basic objects $B_n$ and $B'_n$ respectively, $C_n$ is a $B_n$-permissible center if and only if it is a $B'_n$-permissible center. 

We say that $B$ and $B'$ are {\emph {W-equivalent}} if, in addition, the special fibers 
$B^{(0)}$ and $B'^{(0)}$ are also pre-equivalent. 

In case where the base is a field, pre-equivalence is the same as equivalence. This notion is what in \cite{W} is called equivalence. The notion of equivalence used in  
\cite{EV} requires a further condition, for that reason we use our terminology of W-equivalence.

\end{voi}
\begin{exa} \label{E:eq}
Let $B=(W \to S,I,b,2, \emptyset)$ and
$B=(W \to S,I',b,2, \emptyset)$, where $S=\sp (A)$, $A=k[\epsilon]$, $k$ a field,
$\epsilon ^{2}=0$, $W=\Spec \, (A[x])$, $I=(x^2+{\epsilon}x) $,
$I'=(x^5, {\epsilon}x)$. Then, the $A$-basic objects $B$ and $B'$ are pre-equivalent but not equivalent.
\end{exa}

\section{Some resolution tools}
\label{S:DT}
In this section we study, in the context of $A$-basic objects ($A \in \cA$), some notions discussed in \ref{V:ag6} (where we worked over a field).

\begin{voi}
\label{V:dt2.0} {\it $\o$- and t-permissible centers}.
 Consider a  permissible sequence of $A$-basic objects 
$$\qquad (1) \qquad   B_0 \leftarrow B_1 \leftarrow \cdots \leftarrow B_r$$
where $B_{i+1}=\cT(B_i)$, $C_i$ a $B_i$-permissible center, for all $i$. 
We want to define, recursively on the length $r$, the notion ``(1) is an $\o$-permissible sequence, or an $\omega$-sequence'' (see 
\ref{V:ag6}, where $A=k$, a field). If $r=0$, by definition (1) is always $\omega$-permissible. Assume we know, by induction, what is an $\omega$-permissible sequence when the length is $r$, and that  this implies that the induced sequence of special fibers 
$$(2) \qquad  B_0^{(0)} \leftarrow B_1^{(0)} \leftarrow \cdots \leftarrow B_r^{(0)}$$
is such that each center $C_i^{(0)}$ that was used satisfies 
$C_i^{(0)} \subseteq {\mathrm{Max}}({\omega}_i) $ (i.e., is $\o$-permissible). Given a sequence of length $r+1$, we say it is $\o$-permissible if its $r$-truncation is so (then we have a function 
${\omega}_{r}$ with domain $\sg (B_r^{(0)}):= \sg(B_r)$. Let $b_r/b := 
{\mathrm max}(\o _r)$) and if the center $C_r$ used to obtain $B_{r+1}$ satisfies  
$\nu(\overline{I_r},C_r) = \nu(\overline{I_r^{(0)}},{C_r}^{(0)})= b_r$ (where, e.g., 
$\overline{I_r}$ demotes proper transform, see \ref{V:bo2.0}).  Then the induced sequence of  fibers satisfies 
$C_i^{(0)} \subseteq {\mathrm{Max}}({\omega}_i) $ for all $i$, with notation as above. Given a $\o$-permissible sequence (1), each center $C_j$ used in it is said to be $\o$-permissible for $B_j$.
 If (1) is an $\o$-permissible sequence, by the function $\o _j$ of (1) we mean the function $\o _j$ of the corresponding sequence of  fibers.

\smallskip

Now we study $t$-permissible sequences. Consider a sequence of $A$-basic objects and transformations (1), where we write  $B_j=(W_j \to S, I_j, b, E)$, for all $j$. We shall define, by induction on the length $r$, what it means that (1) is $t$-permissible.

If $r=0$ (i.e. there is just one basic object), the sequence (reduced to one object) is $t$-permissible.

Next assume the notion of $t$-permissible center is defined, by induction, if the sequence has length $\leq r$, in such a way that it  induces a sequence  of  fibers which is $t$-permissible, in the sense of \ref{V:ag7}. We declare a sequence of length $r+1$ $t$-permissible if the following conditions (a) and (b) hold: (a) the $r$-truncation of (1) is $t$-permissible . Hence 
 by looking at  fibers we have functions $t_i$, $i=1, \ldots, r$ satisfying 
${\rm max} ~(t_i) \geq {\rm max}~(t_{i+1}) $, $i=0, \ldots , r$.  
 Let $s$ be the smallest index such that 
${\rm max}\,(\o _s) = {\rm max} (\o _r)$. Let $E_r^{-}$ consist of the hypersurfaces in $E_r$ which are strict transforms of those in $E_s$ and $(\bbar{\o},\bbar{n})= {\mathrm  max}(t_r)$. Then we demand: (b) any component $C$ of the center $C_r$ used to obtain $B_{r+1}$ satisfies:  
 $\nu (\bbar{I_r},C) =  \nu (\bbar{I^{(0)}_r},C^{(0)})= b_r$ and  for each closed point $y \in C$,  
 the number of hypersurfaces in $E^{-}_r$ containing $y$ is as equal to $\bbar{n}$. If (1) is 
  a $t$-permissible sequence, each center $C_j$ used in it is said to be $t$-permissible for $B_j$, and by 
  the function $t _j$ of (1) we mean the function $t _j$ of the corresponding sequence of  fibers. 

\end{voi}
\begin{voi}
\label{V:dt3.0}

We shall consider, in the present situation, the analogue of certain numerical invariants introduced in \cite{V1} (see also \cite{V2}, where it is proved that the algorithm is equivariant, or \cite{EV}, which is an exposition of these two papers).  But we shall use, to simplify, a different notation. 
Consider a permissible sequence of $A$-basic objects and transformations as in (1) of 
\ref{V:dt2.0}, which is  also $t$-permissible.

If $C$ is any irreducible permissible center for $B_j$, $j=0, \ldots r$, we define 
$${\o}_j (C):={{\nu} ({\bar I_j},C)}/b ~ , \qquad 
 {\sigma}_j (C):={{\nu} ({ I_j},C)}/b$$
where we have used the notation of  \ref{V:bn6.0} and \ref{V:bo2.0} . In \cite{EV} these numbers are denoted by
w-ord and ord respectively.

\end{voi}
\begin{pro} 
\label{P:nuom}

With the notation and assumptions introduced in \ref{V:dt3.0}, 
let $C$ be an irreducible permissible center for $B_r$ 
 $D'_i$ its image by the natural morphism $W_r \to W_i:=us(B_i)$, assume $D'_i \subseteq C_i$ and let $D_i$ be the irreducible component of the center $C_i$ containing $D'_i$, $i=0, \ldots , r-1$.  Then,         

$$ {\omega}_r (C) = {\sigma}_r (C) - {\sigma}_{r-1} (D_{r-1}) - \cdots - {\sigma}_0 (D_0) + r$$
\end{pro}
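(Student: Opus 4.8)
The plan is to prove the formula by induction on the length $r$ of the $t$-permissible sequence, reducing everything to the key behaviour of the proper and controlled transforms under a single permissible blowing-up. The base case $r=0$ is the tautology ${\omega}_0(C) = {\sigma}_0(C)$, which holds because ${\bar I}_0 = I_0$ (no exceptional divisor has yet been created), so $\nu({\bar I}_0,C) = \nu(I_0,C)$ and dividing by $b$ gives the claim. For the inductive step, the main identity I want is a ``one-step'' comparison relating the order of the proper transform of $I_r$ along $C$ to the order of the controlled transform $I_r$ along $C$, corrected by the order contributed by the last exceptional divisor $H$ created in the blowing-up $W_{r-1} \leftarrow W_r$ with center $C_{r-1}$.

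Concretely, I would first observe that by \ref{V:bn7.0} all orders along an irreducible center $C$ can be computed at the generic point (or, by \ref{P:ekiv}, at a dense open of closed points), passing to the completion of the local ring relative to $S$; this lets me work with honest power-series coefficients and reduce the relative ($S$-)statement to the absolute statement on the fibers, which is exactly the content of \cite{EV}/\cite{V1} cited in \ref{V:dt3.0}. Indeed, a $t$-permissible sequence over $S$ induces a $t$-permissible sequence of fibers (by the construction in \ref{V:dt2.0}), and by the defining conditions there $\nu({\bar I}_j, D_j) = \nu(\overline{I_j^{(0)}}, D_j^{(0)})$ and similarly $\nu(I_j,D_j) = \nu(I_j^{(0)}, D_j^{(0)})$ at the relevant centers, so the $\omega_j$ and $\sigma_j$ appearing in the formula all coincide with their fiber counterparts. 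Hence it suffices to prove the formula for the induced sequence of basic objects over the residue field, where it is the classical statement; alternatively one redoes the one-step computation directly over $S$ using \ref{V:bo2.0}.

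The heart of the one-step computation is this: write $I_r = {\mathcal E}^{-b} I_{r-1}{\cO}_{W_r}$ and ${\bar I}_r = {\mathcal E}^{-a} I_{r-1}{\cO}_{W_r}$ where ${\mathcal E} = I(H)$ and $a = \nu({\bar I}_{r-1}, D_{r-1}) = b\,{\sigma}_{r-1}(D_{r-1})$ is the exponent by which the proper transform is divided along $D_{r-1}$ (here I use that ${\bar I}_{r-1}$ has order exactly $a$ along $D_{r-1}$ by definition of $\sigma_{r-1}$, and that this is the largest power of ${\mathcal E}$ that can be extracted along $p^{-1}(D_{r-1})$). Comparing the two, ${\bar I}_r = {\mathcal E}^{\,b-a} I_r$ along the exceptional locus; then at the generic point of $C$, using that $C$ maps onto $D_{r-1}$ (so $\nu(I(H),C) = 1$ since $H$ is a smooth $S$-hypersurface meeting $C$ properly — this is where transversality of $C$ to $E_r$ enters, via \ref{V:bn4.0} and \ref{V:bn9.1}), I get $\nu({\bar I}_r, C) = \nu(I_r,C) + (b - a)$, i.e. ${\omega}_r(C) = {\sigma}_r(C) - {\sigma}_{r-1}(D_{r-1}) + 1$. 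Feeding in the induction hypothesis ${\omega}_{r-1}(D_{r-1})$... — more precisely, I need the telescoping: applying the just-derived one-step identity and then the inductive formula for ${\omega}_{r-1}(D_{r-1})$ would not directly telescope, so instead I iterate the one-step identity itself down to level $0$, writing ${\bar I}_r$ as $I_0{\cO}_{W_r}$ divided by the appropriate product of exceptional divisors, each raised to a power $b - (\text{exponent at that stage})$; summing the $+1$ contributions over the $r$ stages and the $-{\sigma}_i(D_i)$ corrections gives exactly the stated formula.

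The step I expect to be the main obstacle is the careful bookkeeping of the exceptional-divisor exponents across all $r$ stages — specifically verifying that at stage $i$ the proper transform ${\bar I}_i$ is divisible by ${\mathcal E}_i := I(H_{i+1})$ to exactly the power $b\,{\sigma}_i(D_i)$ along the component $D_i$ of $C_i$ through which $C$'s image passes, and that these successive divisions are compatible (the proper transform being, by \ref{V:bo2.0}(iii), locally constant along $p^{-1}(D_i)$ but the exponent being read off correctly at the generic point of $C$). Keeping track of strict-versus-total transforms of the earlier exceptional hypersurfaces, and confirming each meets $C$ transversally so contributes order either $0$ or $1$ as appropriate, is the delicate part; once that is pinned down the algebra is the routine telescoping sketched above.
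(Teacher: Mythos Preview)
Your eventual route --- writing the factorization of $I_r$ in terms of $\bar I_r$ and the full product of exceptional divisors and then taking orders along $C$ --- is exactly the paper's argument, and it works. But the ``one-step identity'' you try to derive on the way is wrong, and the derivation shows a genuine confusion you should straighten out.

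The equation $\bar I_r = {\mathcal E}^{-a} I_{r-1}{\cO}_{W_r}$ is false for $r\ge 2$: the proper transform $\bar I_r$ is built from $\bar I_{r-1}$, not from the controlled transform $I_{r-1}$, so the correct relation is $\bar I_r = {\mathcal E}^{-a}\,\bar I_{r-1}{\cO}_{W_r}$ with $a=\nu(\bar I_{r-1},D_{r-1})=b\,\omega_{r-1}(D_{r-1})$. Your claimed equality $\nu(\bar I_{r-1},D_{r-1})=b\,\sigma_{r-1}(D_{r-1})$ conflates $\bar I_{r-1}$ with $I_{r-1}$ (equivalently $\omega$ with $\sigma$); it holds only when $r=1$. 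Consequently the identity $\omega_r(C)=\sigma_r(C)-\sigma_{r-1}(D_{r-1})+1$ is simply false for $r\ge 2$ (it would erase all the earlier $\sigma_i(D_i)$ from the formula). This is why your attempted telescoping does not work and you had to pivot.

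The clean way --- and what the paper does --- is to use the single global factorization
\[
I_r \;=\; I(H_{m+1})^{a_1}\cdots I(H_{m+r})^{a_r}\,\bar I_r,\qquad a_j(y)=\nu(I_{j-1},D_{j-1})-b,
\]
take $\nu(-,C)$ of both sides (each $I(H_{m+j})$ contributes order $1$ along $C$ because $C$ maps into $C_{j-1}$ and has normal crossings with $E_r$), and divide by $b$. No induction or one-step recursion is needed once this factorization and the values $a_j$ are identified.
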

{\begin{proof} Note that if $B_j=(W_j \to S, I_j,b,E_j)$, with 
$E_j= (H_1, \ldots, H_{m+j})$, where $H_1, \ldots, H_m$ are the strict transforms of the hypersurfaces in $E_0$, 
then we have an expression 
$$(1) \quad I_j = I(H_{m+1})^{a_1} \ldots  I(H_{m+j})^{a_{j}} \overline{I_j}$$
where each exponent $a_i$ is constant on each irreducible component of $H_i$. From this (for $j=r)$, taking orders along $C$ (see \ref{V:bn6.0}) we get:
$$\nu (I_r,C) = \nu (\overline{I_j}, C) + \sum_{j=1}^{r} a_j(y)$$ 
where $y$ is the generic oint of $C$. Note that if $y_j$ is the image of $y$ in $W_{j}$ via the appropriate morphism,   
$$  a_j(y_j) = \nu  (I_{j-1},D_{j-1}) - b $$
From this , iterating, we obtain:
$$\nu  ( \overline{I_r},C) = \nu  ({I_r},C) - (\sum_{i=0}^{r-1} \nu (I_i, D_i)) + r b$$
Dividing by $b$ we obtain the desired formula.
\end{proof}

\medskip
We shall discuss the notion of {\it monomial objects} (\ref{V:ag6} (ii) or \cite{EV}, section 5) in the context of basic objects over a ring $A \in {\cA}$.
\begin{voi}
\label{V:det4.0}

{\it Monomial objects.} Using the notation of \ref{V:ag6} (ii), let $B=(W,I,b,E)$ (where $E=(H_1, \ldots, H_m)$) be a monomial object (with $W$ a 
$k$-variety, $k$ a characteristic zero field). We 
 define functions ${\Gamma _i}$, $i=1,2,3$, with domain
$S:= \sg \,(B)$, as follows.

If $w \in S$, $\Gamma _1 (w)$ is the smallest integer $p$ such that there are indices $i_1, \ldots, i_p$ such that
$$(1) \qquad \alpha_{i_1} (w) + \cdots + {\alpha_{i_p}(w)} \geq b $$
Consider, for $w \in S$,  the set $P'(w)$ of sequences  $i_1, \ldots, i_p$ satisfying $(1)$ above, and let  $\Gamma _2 (w)$ be the maximum of the rational numbers
$({\alpha _{i_1} (w)} + \cdots + {\alpha _{i_p}(w)})/b$, for
$(i_1, \ldots, i_p) \in P'(w)$.

If $w \in S$, let $P(w)$ be the set of all sequences 
$(i_1, \ldots, i_p,0,0, \ldots)$ such that 
$({\alpha _{i_1} (w)} + \cdots + {\alpha _{i_p}(w)})/b = \Gamma _2 (w)$, and define 
$\Gamma _3 (w)$ to be the maximum of the set $P(w)$, when we use the lexicographical order.

Finally, one defines a function $\Gamma$ (or $\Gamma _B $) from $S$ to
${\bf Z} \times {\bf Q} \times {\bf Z}^{\bf N}$
 by the formula 
$ \Gamma (w) = (- \Gamma _1 (w), \Gamma _2 (w), \Gamma _3 (w))$. When the target is lexicographically ordered the function $\Gamma$ is upper semicontinuous.

Let max $(\Gamma_3) = (i_1, \ldots, i_p,0,0, \ldots)$ and take
$C= H_{i_1} \cap \cdots \cap H_{i_p}$. Then, it turns out that $C$ is a permissible center for the pair $(W,E)$ and that the transform $B_1$ of $B$ is again monomial, satisfying ${\rm max}~ (\Gamma _{B_1}) < {\rm max} ~(\Gamma_B) $. Thus, iterating this process, after a finite number of steps we reach a situation where the singular locus is empty.
 (See \cite{EV}, section 5 or \cite{BEV}, section 20).

\smallskip

Now consider $A$-basic objects $B=(W \to S, I,b,E), \,E=(H_1, \dots, H_m)$ $ S=\Spec (A) \, A \in {\cA}$. We shall say that such an object is {\it premonomial} if its closed fiber $B^{(0)}$ is monomial. Let ${\Gamma} := {\Gamma}_{B^{(0)}} $. We say that our $A$-basic object $B$ is {\it monomial} if it is premonomial and, letting
$(i_1, \ldots, i_p,0,0, \ldots) = {\rm max}~ (\Gamma_3)$, then
$C:= H_{i_1} \cap \cdots \cap H_{i_p}$ is a $B$-permissible center. This is called the {\it canonical center } of the monomial $A$-basic object $B$.
\end{voi}

\smallskip

The following proposition is an easy consequence of the definitions.
\begin{pro}
\label{P:canctr}

Let $B$ be a monomial $A$-basic object $B$, 
$C:= H_{i_1} \cap \cdots \cap H_{i_p}$ its canonical center. Let $H_i$ induce $H^{(0)} _i$ on the special fiber $B^{(0)}$. Then, if  $B_1$ is the transform of $B$ with center $C$, the closed fiber $B^{(0)}$  is naturally isomorphic to the transform of $B^{(0)}$ with center
$H^{(0)}_{i_1} \cap \cdots \cap H^{(0)}_{i_p}$, and this is the canonical center of $B^{(0)}$.
\end{pro}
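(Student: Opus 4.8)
The statement is essentially a fiber-wise compatibility assertion: the canonical center $C$ of a monomial $A$-basic object blows up, over $S$, to something that on the closed fiber $B^{(0)}$ agrees with the transform of $B^{(0)}$ at its canonical center, and that fiber-center is itself canonical. I would organize the proof around three facts, all of which are either in the excerpt or immediate from it: (1) by Proposition~\ref{P:canctr}'s hypothesis $C = H_{i_1} \cap \cdots \cap H_{i_p}$ is a $B$-permissible center, so by the discussion at the end of \ref{V:bn9.1} (and \ref{V:bo3.0}) the closed fiber of the transform $B_1$ is canonically identified with the transform of $B^{(0)}$ with center $C^{(0)} = C \cap W^{(0)}$; (2) since each $H_i$ is an $S$-hypersurface, $H_i \cap W^{(0)} = H^{(0)}_i$ and, the center being flat over $S$ and cut out by a regular sequence (see \ref{V:bn4.0}), intersecting with the fiber commutes with intersecting the $H_i$, so $C^{(0)} = H^{(0)}_{i_1} \cap \cdots \cap H^{(0)}_{i_p}$; (3) by the very definition of ``monomial'' for $A$-basic objects in \ref{V:det4.0}, the multi-index $(i_1,\ldots,i_p,0,0,\ldots) = {\rm max}(\Gamma_3)$ is computed from $\Gamma_{B^{(0)}}$, i.e. it is \emph{the} datum defining the canonical center of the monomial $k$-variety object $B^{(0)}$ as recalled in \ref{V:det4.0}. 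Assembling (1), (2), (3) gives both assertions of the proposition.

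\textbf{Steps in order.} First I would record that $B^{(0)}$ is monomial by definition of premonomial/monomial in \ref{V:det4.0}, and that its canonical center, in the sense of the field-case construction recalled there, is exactly $H^{(0)}_{i_1} \cap \cdots \cap H^{(0)}_{i_p}$ with $(i_1,\ldots,i_p,0,\ldots) = {\rm max}(\Gamma_{B^{(0)},3})$ — this is tautological once one notes $\Gamma := \Gamma_{B^{(0)}}$ in \ref{V:det4.0}, and gives the last clause of the statement. Second, I would verify the scheme-theoretic identity $C^{(0)} = H^{(0)}_{i_1} \cap \cdots \cap H^{(0)}_{i_p}$: locally at a closed point $w \in C$, pick an $A$-regular system of parameters $a_1,\ldots,a_n$ of ${\cO}_{W,w}$ adapted to $E$ as in \ref{V:bn4.0}, so that each $I(H_{i_j})_w = (a_{\ell_j})$ and $I(C)_w = (a_{\ell_1},\ldots,a_{\ell_p})$; reducing modulo the maximal ideal $M$ of $A$ sends $a_i \mapsto a_i^{(0)}$, a regular system of parameters of ${\cO}_{W^{(0)},w}$, and intersecting with the fiber corresponds to tensoring with $A/M$, which is exact on these free modules, so the displayed ideal identity passes to ${\cO}_{W^{(0)},w}$ unchanged — giving $C^{(0)} = \bigcap_j H^{(0)}_{i_j}$. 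Third, invoke \ref{V:bn9.1} (closed fiber of the transform of a pair with an $S$-permissible center is the transform of the fiber with center $C^{(0)}$) together with \ref{V:bo3.0} (the same for basic objects, compatibly with controlled/proper transforms and with $\overline{(I_1)}^{(0)} = \overline{(I^{(0)})_1}$): this identifies $B_1^{(0)}$ with the transform of $B^{(0)}$ at $C^{(0)}$. Combining with Step~2 rewrites $C^{(0)}$ as $\bigcap_j H^{(0)}_{i_j}$, and combining with Step~1 identifies the latter with the canonical center of $B^{(0)}$.

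\textbf{Expected obstacle.} Almost every ingredient is quoted from earlier in the paper, so there is no deep difficulty; the only genuinely nontrivial point — and the one I would be most careful about — is the commutation ``$(\cdot) \cap W^{(0)}$ commutes with $(\cdot) \cap (\cdot)$'' for the hypersurfaces, i.e. that $C^{(0)}$, defined as the \emph{scheme-theoretic} fiber of $C \to S$, really is the \emph{scheme-theoretic} intersection of the $H^{(0)}_{i_j}$ and not just set-theoretically so or off an embedded-point discrepancy. This is where the flatness of $C$ over $S$ and the fact that $C$ (and each $H_{i_j}$, and their partial intersections) is cut out by part of an $A$-regular system of parameters — hence the relevant quotient modules are $A$-flat, so tensoring with $k = A/M$ is exact — does the work; the normal-crossings hypothesis on $E$ is exactly what guarantees a single regular system of parameters simultaneously adapted to $C$ and all the $H_i$ through $w$. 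I would spell this local computation out and leave the rest as ``immediate from \ref{V:bn9.1}, \ref{V:bo3.0} and \ref{V:det4.0}''.
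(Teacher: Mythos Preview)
Your proposal is correct and follows exactly the approach the paper has in mind: the paper simply states that this proposition ``is an easy consequence of the definitions'' and gives no further argument, and what you have written is precisely the unpacking of those definitions via \ref{V:bn9.1}, \ref{V:bo3.0}, and \ref{V:det4.0}. One small remark: the ``expected obstacle'' you flag is actually lighter than you suggest, since scheme-theoretic intersection is the sum of ideal sheaves and sums of ideals trivially commute with passing to the quotient ring ${\cO}_{W^{(0)}} = {\cO}_W / M{\cO}_W$, so $C^{(0)} = \bigcap_j H^{(0)}_{i_j}$ holds without invoking flatness or regular sequences --- but your more careful local argument is of course also valid.
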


It immediately follows from the proposition that if $B$ is monomial, with canonical center $C$, then the transform $B_1$ of $B$ at $C$ is again premonomial.
\begin{Def}
\label{D:gamper} If $B$ is a basic object over $A$, we say that $B$ admits a {\it $\Gamma$-permissible center} if: (a) $B$ is monomial, (b) the transform $B_1$ of $B$ at the canonical center is again monomial.
\end{Def}
\begin{voi}
\label{V:det4.0'}
{\it ${\rho}$-permissible  sequences}. A sequence $B_0 \leftarrow \cdots \leftarrow B_r$ of $A$-basic objects and transformations is called $\rho$-permissible if there is an integer $s \ge 0$ such that: 
(a) $B_0 \leftarrow \cdots \leftarrow B_s$  is $t$-permissible, (b) $B_j$ is monomial if $s \le j$ and,  for all such $j$, $B_j \leftarrow B_{j+1}$ is the transformation with the canonical center of $B_j$. In particular, it could be $s >r$, in this case the sequence is $t$-permissible, or $s=0$, in this case all the objects are monomial and the transformations have canonical centers.
\end{voi}

\section{The inductive object}\label{S:IS}

\begin{voi}
\label{V:is1.0}
Throughout this section, $A$ denotes an artinian ring in $\mathcal A$ (\ref{V:bn1.0}) and  $S=\Spec (A)$. Given an $A$-basic object
$B=(W \to S, I,b,E)$, 
$E=(H_1, \ldots, H_m)$,  we say that  an $S$-hypersurface $Z \subset W$ is 
 transversal to $E$ if for each closed point $w$ in $Z$, there is an $A$-regular system of parameters 
$a_1, \ldots, a_n$ of ${\cO}_{W,w}$ 
 such that $I(Z)_w$ is defined by $a_1$ and for every $H$ in $E$ containing $w$, the ideal $I(H)_w$ is defined by some $a_i$, with $i>1$. In particular, $Z$ has normal crossings with $E$, in the sense of \ref{V:bn4.0}.
\end{voi}
\begin{voi} 
\label{V:is2.0}
{\it Adapted and inductive hypersurfaces, nice objects.}
Given an $A$-basic object 
$B=
{(W \to S, I,b,E)}$ as above, we say that an $S$-hypersurface $Z \subset W$ 
is {\it adapted} to $B$ (or that $Z$ is {\it $B$-adapted}) if
 the following conditions hold:\\
\smallskip
(A1) $I(Z) \subseteq \Delta ^{b-1} (I/S)$ (an inclusion of sheaves of
${\cO}_W$-ideals),\\
\smallskip
(A2) $Z$ is transversal to $E$. \\
\smallskip
If $Z$ is adapted and, moreover, it satisfies\\ 
(A3) Whenever $D$ (resp. $D'$) is an irreducible component of $Z$ (resp. of
$V({\Delta}^{b-1}(I/S))$) we have $D \not= D'$, \\
then we say that $Z$ is $B$-inductive (or just inductive, if $B$ is clear). 

In  \cite{EV} the analogues of (A1) and (A2) are called (LC) and (IA),
respectively. Condition (A3) does not follow from the others. For instance, if $B$ is the basic object $(\Spec ({\bf C}[x,y],(y),1, \emptyset)$
and $Z$ is the line $V(y)$, then $Z$ satisfies (A1) and (A2)
but $D=Z$ is a component of both $Z$ and $V({\Delta}^{b-1}(I/S))$. But if $\sg(B)$ has codimension at least two, then any $B$-adapted hypersurface is automatically inductive.

\smallskip
We shall say that $B$ (a basic object over $A$) is {\it nice} if either
Sing($B$) is empty or $B$ admits an adapted hypersurface. We say that $B$ {\it
is nice at} $w \in W$ if there is an open neighborhood $U$ of $w$ such that
the restriction of $B$ to $U$ is nice; $B$ is {\it locally nice} if $B$ is nice at $w$, for any $w \in W$.
\end{voi}
\begin{voi}
\label{V:is3.0}
{\it Remarks on hypersurfaces}. We keep the previous assumptions and notation.
Recall that, according to  \ref{V:bn2.0} and \ref{V:bn9.0}, if $Z \subset W$ is a regular
$A$-hypersurface and $w \in Z$ is a closed point, then the stalk $I(Z)_w$ is generated by an
element $a \in {\cO}_{W,w}$ which is part of an $A$-regular system of
parameters 
\ref{V:bn3.0}. In  particular, if
$a_1, \ldots, a_d$ form an $A$-regular system of parameters of
${\cO}_{W,w}$, $J:= (a_1, \ldots, a_d)$ and $a \in J$, then the order of the generator $a$
with respect to $J$ is =1.

Now assume that the regular hypersurface $Z$ also satisfies (A1), i.e.,
$I(Z) \subseteq {\Delta}^{(b-1)}:={\Delta}^{b-1}(I/S)$. Then, if $w \in Z$, the generator $a \in I(Z)_w$ defining $Z$ near $w$ belongs to the stalk 
$[{\Delta}^{(b-1)}]_w$. If $C$ is a $B$-permissible center, then by (A1) and
\ref{P:ekiv} 
necessarily $C \subseteq Z$. If
$w \in C$,  then
$J:=I(C)_w$ is generated by part of an $A$-regular system of parameters. Since $C \subseteq Z$, $a \in J$ and hence the order of $a$ with respect to $J$ is 1. Thus 
we see that $[{\Delta}^{(b-1)}]_w$ contains an element which is part of an
$A$-regular system of parameters.

Conversely, if $w \in W$ and
$[{\Delta}^{(b-1)}]_w$
contains an element $a$ which is part of an $A$-regular system of parameters,
then $a$ defines, near $w$, an $A$-regular hypersurface
satisfying condition (A1).

\end{voi}
\begin{lem} 
\label{L:fibZ}
If $B$ is nice basic object over $A$, then its fiber $B^{(0)}$ is
also a nice basic object (over the residue field of $A$).
\end{lem}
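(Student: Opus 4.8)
The plan is to unwind the definitions of \emph{nice} in the relative and absolute settings and to exhibit, from an adapted $S$-hypersurface for $B$, an adapted hypersurface for the fiber $B^{(0)}$. Recall that $B=(W\to S,I,b,E)$ is nice means either $\sg(B)=\emptyset$ or there is an $S$-hypersurface $Z\subset W$ satisfying (A1) and (A2) of \ref{V:is2.0}. Since $\sg(B)=\sg(B^{(0)})$ by Definition \ref{D:bo}, if $\sg(B)$ is empty then $\sg(B^{(0)})$ is empty and $B^{(0)}$ is nice for trivial reasons. So the only case to treat is when $\sg(B)\neq\emptyset$ and $Z$ is a $B$-adapted $S$-hypersurface; I claim the induced subscheme $Z^{(0)}:=Z\cap W^{(0)}$ is a $B^{(0)}$-adapted hypersurface.

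First I would check $Z^{(0)}$ is actually a regular hypersurface of the $k$-variety $W^{(0)}$: by \ref{V:bn2.0}, at each closed point $w\in Z$ the stalk $I(Z)_w$ is generated by an element $a$ which is part of an $A$-regular system of parameters, so by \ref{V:bn3.0} its image $a^{(0)}$ is part of a (usual) regular system of parameters of $\cO_{W^{(0)},w}$; hence $Z^{(0)}=V(a^{(0)})$ is regular of codimension one near $w$. Second, for (A2): transversality of $Z$ to $E$ over $S$ (\ref{V:is1.0}) is expressed by the existence, at each closed point $w\in Z$, of an $A$-regular system of parameters $a_1,\dots,a_n$ with $I(Z)_w=(a_1)$ and each $I(H)_w$ ($H\in E$ through $w$) generated by some $a_i$, $i>1$; reducing mod $M$ and invoking \ref{V:bn3.0} turns this into exactly the statement that $Z^{(0)}$ is transversal to $E^{(0)}$ in the sense of \cite{BEV}, i.e. (A2) for $B^{(0)}$. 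Third, and this is the step I expect to carry the real content, is (A1): one needs $I(Z^{(0)})\subseteq\Delta^{b-1}(I^{(0)})$. Here I would use that the relative differential operators $\Delta^i(-/S)$ of \ref{V:bn8.0} are compatible with passage to the closed fiber — on completions, $\Delta^i(J/S)_wR^*$ is generated by elements of $J$ and their partials of order $\le i$ with respect to an $A$-regular system of parameters $x_1,\dots,x_n$, and reducing coefficients mod $M$ (equivalently, tensoring with $k$ over $A$) sends this to the ideal generated by elements of $J^{(0)}$ and their partials of order $\le i$, which is $\Delta^i(J^{(0)})$ in the sense of \ref{V:ag3}. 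Applying this with $J=I$ and $i=b-1$ gives $\Delta^{b-1}(I/S)\cO_{W^{(0)}}=\Delta^{b-1}(I^{(0)})$; combined with (A1) for $Z$, namely $I(Z)\subseteq\Delta^{b-1}(I/S)$, and with $I(Z^{(0)})=I(Z)\cO_{W^{(0)}}$, we get $I(Z^{(0)})\subseteq\Delta^{b-1}(I^{(0)})$, i.e. (A1) for $B^{(0)}$.

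The main obstacle is precisely this compatibility $\Delta^{b-1}(I/S)\cO_{W^{(0)}}=\Delta^{b-1}(I^{(0)})$: one must be careful that the relative Fitting-ideal definition of $\Delta(-/S)$ in \ref{V:bn8.0} commutes with the base change $A\to k$, and that the ``partials with respect to an $A$-regular system of parameters'' computed on $W$ restrict to honest partials on $W^{(0)}$ — this is where the hypothesis that $a_1,\dots,a_n$ is an $A$-regular system of parameters (so its reduction is a genuine regular system of parameters of $\cO_{W^{(0)},w}$, \ref{V:bn3.0}) is used, and where the completion description of \ref{V:bn7.0}--\ref{V:bn8.0} does the work. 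Everything else is a direct translation of the defining conditions under reduction modulo $M$. I would remark at the end that if in addition $Z$ is $B$-inductive (A3), the same argument — together with the observation that $Z^{(0)}$ has no common irreducible component with $V(\Delta^{b-1}(I^{(0)}))=V(\Delta^{b-1}(I/S))^{(0)}$, which one checks by noting that a common component over the fiber would lift — shows $Z^{(0)}$ is $B^{(0)}$-inductive, though this is not needed for the statement as stated.
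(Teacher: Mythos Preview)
Your proposal is correct and is exactly the argument one would expect; the paper itself omits the proof entirely, saying only that it ``is simple and will be omitted.'' Your careful verification of (A1) via the base-change compatibility $\Delta^{b-1}(I/S)\,{\cO}_{W^{(0)}}=\Delta^{b-1}(I^{(0)})$ (which follows from the local description in \ref{V:bn8.0} together with \ref{V:bn3.0}) is the only point requiring any thought, and you have handled it appropriately.
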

The proof is simple and will be omitted.

\begin{voi}
\label{V:is4.0}If $B$  is a nice object and $C \subset W$ is an irreducible $B$-permissible
center, with generic point $y$, then $\nu (C,I)=b$. To see this, it suffices to show that if $w$ is a closed point of $\sg B$, then the stalk 
$[{\Delta}^{(b-1)}]_{w}$ contains contains an element which is part of an
$A$-regular system of parameters. But
 if $Z$ is any
adapted hypersurface, from condition (A1) and the inclusion
$\Delta ^{b-1} (I/S) \subseteq I(C)$ we obtain $I(Z) \subseteq I(C)$. From
this, our assertion follows from \ref{V:is3.0}, taking $J=I(C)_y$.
\end{voi}
\begin{voi}
\label{V:is4.1}
Recall that if $B=(W,I,b,E)$ is a basic object over a field, then it is called {\it
good} if $\nu _w(I)=b$ for any $w \in \sg(B)$ (see  \cite{EV} ). Working over $A$ artinian, we shall say that an $A$-basic object $B$ is {\it good} if its special fiber $B^{(0)}$ is good, in the sense just mentioned. The result of \ref{V:is4.0} says that, working over a field, a nice object is good. Since the fiber of a nice $A$-object is a nice $k$-object ($k$ being the residue field of $A$), the same is true working with $A$-basic objects.
\end{voi}
\begin{rem}
\label{R:cegood}
Given a good $A$-basic object $B=(W \to S ,I,b, E)$, a closed subscheme $C$ of $W$ which is a center for the pair $(W \to S, E)$ (see \ref{V:bn5.0}) is also a $B$-permissible center if and only if 
${\Delta}^{b-1}(I/S) \subseteq I(C)$. In other words, the extra condition 
$\nu(I,C')=\nu(I^{(0)},C'^{(0)})$, for any component $C'$ of $C$, automatically follows. Indeed, one implication is clear, for the other note that from the inclusion and the assumption that $B$ is good we get 
$b \leq \nu(I,C') \leq \nu(I^{(0)},C'^{(0)}) = b$, hence all of these are equalities.
\end{rem}
\begin{voi}
\label{V:is5.0}
The notion of adapted hypersurface is stable under permissible
transformation, in the sense that if
$B \leftarrow B_1 = (W_1 \to S, I_1,b,E_1)$ is a transformation along a
$B$-permissible center $C$ and $Z_1$ is the strict transform of $Z$ to $W_1$,
then $Z_1$ is an adapted hypersurface for $B_1$. The stability of condition
(A1) follows from
Lemma \ref{L:gir} below (Giraud's Lemma), part (b), and that of (A2) is well known.   
 Lemma \ref{L:gir} follows from Lemma \ref{L:preg}, to be given next. The proofs of both
lemmas are entirely analogous to those given in 9.1 and 9.2 of \cite{EV}, and
will be omitted. Indeed, thanks to the remarks in \ref{V:bn7.0}, the necessary
calculations in completions of rings (discussed in \cite{EV}, section 9) can
be carried out in our case.
\end{voi}
\begin{lem}
\label{L:preg}
Using the notation of \ref{V:is5.0}, let $B \leftarrow B_1$ be a permissible
transformation of $A$-basic objects, $H \subset W_1$ the exceptional divisor, with defining ideal $I(H)$.  Then,

\smallskip
(i) $\Delta ^{b-i} (I/S) {\cO}_{W_1} \subseteq I(H)^i$,

\smallskip
(ii) $ I(H)^{-i}\Delta ^{b-i} (I/S) {\cO}_{W_1} \, \subseteq  \Delta ^{b-i}
(I_1/S)$.
\end{lem}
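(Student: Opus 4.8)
The plan is to reduce both statements to local computations in completions of local rings, exactly as in \ref{V:ag1} and \ref{V:bn7.0}, so that the proof becomes formally identical to the field case treated in \cite{EV}, 9.1--9.2. Since $\Delta^i(I/S)$, $I(H)$ and $\Delta^i(I_1/S)$ are all defined by the Fitting-ideal description of \ref{V:bn8.0} (which commutes with completion), it suffices to check the inclusions after passing to the completion $R^*=A'[[x_1,\dots,x_n]]$ of $\cO_{W,w}$ at a closed point $w$ of the center $C$, where $a_1,\dots,a_r$ generate $I(C)_w$ and are part of an $A$-regular system of parameters. By \ref{V:bn7.0}--\ref{V:bn8.0} the partial derivatives $\partial/\partial x_i$ behave in $R^*$ exactly as they do over a field, the only difference being that the ``coefficient'' ring is $A'$ (an artinian $A$-algebra) rather than a field; but none of the manipulations in Giraud's computation use that the coefficient ring is a field, only that the $x_i$ are analytically independent and that we may differentiate formally.

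First I would set up the local chart: choose a closed point $w$ of $C$, write $J=I(C)_w=(a_1,\dots,a_r)$, and in $R^*=A'[[x_1,\dots,x_n]]$ (with $x_i\leftrightarrow a_i$) fix the standard affine chart of the blowing-up, say $x_1=u$, $x_i=u\,y_i$ for $2\le i\le r$, so that $I(H)$ is generated by $u$. A generator $f\in I_w$ has, as a power series in $x_1,\dots,x_r$ over $A'$, order $\ge b$ along $C$ (here \ref{P:ekiv} and \ref{V:is4.0} guarantee $\nu(I,C)=b$ in the nice case, but for Lemma \ref{L:preg} one only needs $\nu(I,C)\ge b$, which holds by permissibility). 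Then for part (i): every element of $\Delta^{b-i}(I/S)_w$ is, in $R^*$, a sum of $(b-i)$-fold partial derivatives of elements of $I_w$ (together with lower-order terms), hence a power series of order $\ge b-(b-i)=i$ along $C$; after the substitution defining the chart, such a series is divisible by $u^i$, which is exactly $\Delta^{b-i}(I/S)\cO_{W_1}\subseteq I(H)^i$. For part (ii), having divided by $u^i$, one computes directly that $u^{-i}$ times a $(b-i)$-fold derivative of $f\in I_w$, expressed in the variables $u,y_2,\dots,y_r,x_{r+1},\dots,x_n$, is a combination of $(b-i)$-fold derivatives of $u^{-b}f=$ (a generator of the controlled transform $I_1$) — this is the chain-rule bookkeeping that is the heart of Giraud's Lemma, and it is insensitive to the ground ring. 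One then invokes the conclusion for the complementary charts $x_j=u$, $j\ne 1$, by symmetry, and glues, noting that $\Delta^i(\cdot/S)$ and $I(H)$ are coherent so the stalkwise inclusions at closed points of $W_1$ suffice (closed points are dense, $W_{1,\mathrm{red}}$ being an algebraic scheme over the residue field, cf.\ the argument in \ref{P:ekiv}).

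The main obstacle — and really the only thing to be careful about — is making sure that the ``differential'' description of $\Delta^i(\cdot/S)$ via relative Fitting ideals of $\Omega_{Y/S}$ genuinely coincides, after completing at a closed point of the center, with the naive ideal generated by iterated formal partials $\partial f/\partial x_i$ with respect to an $A$-regular system of parameters. This is asserted in \ref{V:bn8.0} and is where the hypothesis $A\in\cA$ (so that $A$, hence $A'$, is complete equicharacteristic and $R^*\cong A'[[x_1,\dots,x_n]]$ by \ref{P:preries}) is used; granting that identification, the computation proceeds verbatim as in \cite{EV}, sections~9.1--9.2, with $k$ replaced by $A'$. I would therefore state the proof as: \emph{by \ref{V:bn7.0} and \ref{V:bn8.0} the relevant ideals may be computed in $R^*=A'[[x_1,\dots,x_n]]$ using formal partial derivatives, and there the inclusions (i) and (ii) are proved by the same chart computation as in \cite{EV}, 9.1--9.2, which does not use that the coefficient ring is a field; hence the lemma holds over any $A\in\cA$.} No genuinely new estimate is needed beyond verifying that each step of that computation is a formal identity in $A'[[x_1,\dots,x_n]]$.
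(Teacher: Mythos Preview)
Your proposal is correct and is precisely the paper's own approach: the paper omits the proof of this lemma, stating that it is ``entirely analogous to those given in 9.1 and 9.2 of \cite{EV}'' and that ``thanks to the remarks in \ref{V:bn7.0}, the necessary calculations in completions of rings \ldots\ can be carried out in our case.'' You have simply spelled out that one-line justification in more detail, correctly identifying that the only point to check is that \ref{V:bn7.0}--\ref{V:bn8.0} let one compute $\Delta^i(\cdot/S)$ via formal partials in $A'[[x_1,\dots,x_n]]$, after which the chart computation from \cite{EV} goes through verbatim.
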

\begin{lem}
\label{L:gir}
Consider an $A$-basic object
$B=(W \to S,J,b,E)$, an  $A$-adapted hypersurface $Z \subset W$, an irreducible
$B$-permissible center $C \subset Z$ and $B_1=(W_1 \to S, J_1,b,E_1)$, the
transform of $B$ with center $C$. Let
$B'=(W \to S,I(Z),1,E)$. Then :

\smallskip
(a) $C$ is  a $B'$-permissible center, and if
$B'_1=(W_1 \to S, I(Z)_1,1,E_1)$ is the the transform of $B'$ with center
$C$, then
$I(Z)_1 = I(Z_1)$, where $Z_1$ is the strict transform of $Z$ to $W_1$,

\smallskip
(b) $I(Z_1) \subset \Delta ^{b-1}(J_1 /S)$.
\end{lem}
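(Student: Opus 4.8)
\textbf{Proof proposal for Lemma \ref{L:gir}.}

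The plan is to reduce everything to the two inclusions of Lemma \ref{L:preg}, exactly as in \cite{EV}, 9.1--9.2, using that the completions $R^* \cong R'[[x_1,\dots,x_n]]$ described in \ref{V:bn7.0} make the relevant computations identical to the classical ones over a field. First I would establish part (a). Since $Z$ is $B$-adapted we have $I(Z) \subseteq \Delta^{b-1}(J/S)$, and since $C$ is $B$-permissible and $B$ is (after shrinking, or because $\nu$ behaves well) such that $\Delta^{b-1}(J/S) \subseteq I(C)$ (this is the content of \ref{P:ekiv}, (iv)), we get $I(Z) \subseteq I(C)$ at the generic point $y$ of $C$, so $\nu(I(Z),C) \geq 1$; trivially it is $\leq 1$ since $I(Z)$ is locally principal generated by part of an $A$-regular system of parameters (\ref{V:is3.0}). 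As $Z$ is transversal to $E$ (hence has normal crossings with it), $C$ is therefore a permissible center for the pair underlying $B'$, and the order-one condition is the equality $\nu(I(Z),C)=1=\nu(I(Z)^{(0)},C^{(0)})$ needed for $B'$-permissibility; the last equality holds because $I(Z)$ restricted to the fiber still cuts out a regular hypersurface through the regular center $C^{(0)}$. The identification $I(Z)_1 = I(Z_1)$ is the standard computation: the controlled transform of the ideal of a regular hypersurface containing the (regular) center, divided by the exceptional factor to index $1$, is precisely the ideal of the strict transform. This can be checked in the completion $R^*$ at a closed point of $C$ using the explicit chart equations for the blowing-up, and it is here that the remarks of \ref{V:bn7.0} let us copy the field-case argument of \cite{EV} verbatim.

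For part (b), with $Z_1 = $ strict transform of $Z$, I would argue as follows. By part (a), $I(Z_1) = I(Z)_1$, the controlled transform (index $1$) of $I(Z)$ under $B' \leftarrow B'_1$. Using $I(Z) \subseteq \Delta^{b-1}(J/S)$ (condition (A1) for $B$) and applying Lemma \ref{L:preg}(ii) with $i = b-1$, we obtain
$$ I(H)^{-(b-1)} \, \Delta^{b-1}(J/S)\,{\cO}_{W_1} \ \subseteq\ \Delta^{b-1}(J_1/S). $$
On the other hand, the controlled transform of $I(Z)$ to index $1$ is $I(H)^{-1} I(Z){\cO}_{W_1}$, and one must compare the single factor $I(H)^{-1}$ coming from the index-$1$ transform of $I(Z)$ with the factor $I(H)^{-(b-1)}$ appearing above. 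Here Lemma \ref{L:preg}(i), $\Delta^{b-i}(J/S){\cO}_{W_1} \subseteq I(H)^i$, is what bridges the gap: starting from $I(Z){\cO}_{W_1} \subseteq \Delta^{b-1}(J/S){\cO}_{W_1} \subseteq I(H)^{b-1}$ is too crude, so instead one keeps track of orders along $C$ precisely. The clean way is: $I(Z)_y$ has order $1$ along $C$ while a general element of $\Delta^{b-1}(J/S)$ also has order exactly $1$ (that is why $Z$ is adapted and not merely contained in the singular locus), so the exceptional exponent that one strips off $I(Z){\cO}_{W_1}$ to form the strict transform equals the exponent $1$, and $I(Z_1) = I(H)^{-1}I(Z){\cO}_{W_1} \subseteq I(H)^{-1}\Delta^{b-1}(J/S){\cO}_{W_1}$. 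Combining with the displayed inclusion (which gives $I(H)^{-1}\Delta^{b-1}(J/S){\cO}_{W_1} \subseteq I(H)^{b-2}\Delta^{b-1}(J_1/S) \subseteq \Delta^{b-1}(J_1/S)$ since $b \geq 1$) yields $I(Z_1) \subseteq \Delta^{b-1}(J_1/S)$, which is (b).

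I expect the main obstacle to be bookkeeping the exceptional exponents correctly in part (b): one is transforming $I(Z)$ at index $1$ but $J$ at index $b$, and the two controlled transforms interact with $\Delta^{b-1}$ through the twin inclusions of \ref{L:preg} with different values of $i$; getting the inequality $b-1 \geq 1$ to absorb the leftover power of $I(H)$ in the right direction (i.e. that we have \emph{extra} exceptional factors to discard, never a shortfall) is the delicate point. All of this is purely local at closed points of $C$ and, thanks to \ref{V:bn7.0}, reduces to a power-series computation in $R'[[x_1,\dots,x_n]]$ that is formally identical to the one in \cite{EV}, section 9; accordingly, following the stated intention of \ref{V:is5.0}, I would record the statement and refer to \cite{EV}, 9.1--9.2, for the computational details rather than reproduce them.
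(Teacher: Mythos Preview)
Your approach is exactly the paper's: reduce both parts to Lemma \ref{L:preg} and note that, thanks to \ref{V:bn7.0}, the local computations are formally identical to those in \cite{EV}, 9.1--9.2 (the paper in fact omits the proof with precisely this justification, see \ref{V:is5.0}).

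There is one index slip in your part (b). Lemma \ref{L:preg}(ii) reads $I(H)^{-i}\Delta^{b-i}(J/S){\cO}_{W_1} \subseteq \Delta^{b-i}(J_1/S)$, so taking $i=b-1$ gives a statement about $\Delta^{1}$, not $\Delta^{b-1}$; your displayed inclusion is a misquotation. The correct instance is $i=1$, which yields directly
\[
I(H)^{-1}\,\Delta^{b-1}(J/S)\,{\cO}_{W_1}\ \subseteq\ \Delta^{b-1}(J_1/S),
\]
and then $I(Z_1)=I(H)^{-1}I(Z){\cO}_{W_1}\subseteq I(H)^{-1}\Delta^{b-1}(J/S){\cO}_{W_1}\subseteq \Delta^{b-1}(J_1/S)$ in one step. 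Your detour through $I(H)^{b-2}$ is unnecessary (and would fail for $b=1$). With this correction the argument is clean and matches the intended one.
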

\begin{voi}
\label{V:is6.0}
{\it The coefficient ideal and the object $B_Z$.}  We use the notation of \ref{V:is1.0}. Let  $B$ be a nice $A$-basic object and $Z$ a $B$-inductive hypersurface.
 
 \smallskip 
 
 (a) We define a
sheaf of ${\cO}_W$-ideals, called the {\it coefficient ideal} and denoted by
${\cC}(I/S)$, as follows:
 $${\cC}(I/S) : = \sum_{i=0}^{b-1} \, [{\Delta}^i(I/S)]^{b!/b-i}  $$
 Often, when $A$ is a field we'll simply write 
${\cC}(I)$.
 If $B$ is nice and $Z$ a $B$-inductive hypersurface, we define a
sheaf of ideals on $Z$, called the {\it coefficient ideal relative to $Z$}, or the 
$Z$-{\it coefficient ideal}, denoted by
${\cC}(I/S,Z)$, as the restriction of  
${\cC}(I/S)$ to $Z$. 

\smallskip

(b) The $A$-basic object 
$B_Z := (Z \to S, {\cC}(I/S,Z), b!, E_Z)$, where we write $E_Z := ( {H_1 \cap Z},
\ldots, {H_m \cap Z})$ is called the {\it inductive object of $B$, relative to the inductive hypersurface $Z$}. Indeed, this is an $A$-basic object. The fact that $E_Z$ is a
system of $A$-hypersurfaces with normal crossings follows from (A2); what is
left is to check that ${\cC}(I/S,Z)$ is a never zero sheaf of ideals, and this follows
from (A3).
\end{voi}
\begin{voi}
\label{V:is7.0}
Condition (A3) is important. For instance, if the basic object is $B=(\Spec ({\bf C}[x,y],(y^2),
2, \emptyset)$ and $Z$ is the line $V(y)$, then ${\cC}((y^2))$ restricts the zero sheaf
of ideals on $Z$. So, $Z$ is not a $B$-inductive hypersurface.

But, as remarked in \ref{V:is2.0}, if $\sg(B)$ has codimension at least two, any $B$-adapted hypersurface is automatically inductive, so for any  $B$-adapted hypersurface  the inductive object is defined. We shall use this observation several times, for instance in sections \ref{V:ae3.0} and \ref{V:ae4.0}.

It is well known that the object $B_Z$ is not necessarily nice, even if the base ring $A$ is a
field $k$. 
\end{voi}

\begin{voi}
\label{V:is8.0}
In case we are working over a field $k$ (characteristic zero), if
$B=(W,J,b,E)$ is a nice basic object and $Z$ an inductive hypersurface, it is proved in \cite{EV} that the 
object $B_Z$ has the following properties:

\smallskip
(i)
$ \sg (B) = \sg (B_Z)$. From this it follows that a subscheme  $C$ of $W$ is a
$B$-permissible center if and only if $C$ is a $B_Z$-permissible center.

\smallskip
(ii) If $B \leftarrow B_1$ is a permissible transformation, by (i) using the
same center we obtain a permissible transformation
$B_Z \leftarrow (B_Z)_1$. By \ref{V:is5.0}  the strict transform $Z_1$ of $Z$ to
$B_1$ is an inductive hypersurface of $B_1$. Although we cannot prove that
$(B_1)_{Z_1}$ is isomorphic to
$(B_Z)_1$, it is true that
$\sg ((B_1)_{Z_1}) = \sg ({(B_Z)_1})$. We may repeat the process, taking a
transformation of $B_1$ with a permissible center $C_1$ (which will also be
$(B_Z)_1$-permissible.) After iterating $k$ times we get (in obvious notation):
$$(1) \qquad \sg ((B_k)_{Z_k}) = \sg ({(B_Z)_k})$$
The  analog of (i) in the context of basic objects over an artinian ring
$A$ would be the assertion: if $B$ is a nice $A$-basic object and $Z$ is a
$B$-inductive hypersurface, then a center $C$ is $B$-permissible if and only if
it is $B_Z$-permissible. More precisely, if $C$ is a permissible center for $B_Z$, then by (A2) $C$ will be a permissible center for the pair $(W \to S, E)$ (see \ref{V:bn5.0}). But, will it  be $B$-permissible (i.e., will 
$\nu(I,C)=\nu(I^{(0)}, C^{(0)})\ge b$ hold?) Conversely, if $C$ is a $B$-permissible center, then by (A1) it will be permissible for the pair $(Z \to S, E_Z)$. But, will it be $B_Z$-permissible? 

The answer to both questions is negative. Next we present  examples.

\end{voi}
\begin{exa}
\label{E:accidenti}
(Showing that $C$ is $B$-permissible does not imply $C$ is $B_Z$-permissible). 
Let $B=(W \to S,I,2, \emptyset)$ where $S=\Spec (A)$, $A=k[\epsilon]$, $k$ a field,
$\epsilon ^{2}=0$, $W=\sp \, (A[x,z])$, $I=(z^2 + \epsilon x^2, z^3 + x^3) $.
 Then, $\Delta ^{1}(I/S) = (z, \epsilon x, x^2)$ and one easily checks that $Z$, defined by the ideal $(z) \, A[x,z]$ is a $B$-inductive hypersurface, and
 $B_Z = (\sp \, (A[x]) \to \sp \,(A), (\epsilon x^2, x^3), 2, \emptyset)$. Then we see that $C \subset W$, defined by the ideal $(x,z)$ is a $B$-permissible center but not a
 $B_Z$-permissible center.
\end{exa}
\begin{exa}
\label{E:accidento} 
(Showing that $C$ is $B_Z$-permissible does not imply $C$ is $B$-permissible). Let $A$ and  $W \to S$ be as in Example \ref{E:accidenti},  
 $B=(W \to S,I,4, \emptyset)$, where $I=(x^5+\epsilon x^2 \, z + z^4)$. Then one verifies that $Z$, the subscheme of $W$ defined by $(z) \, A[x,z]$ is a $B$-inductive hypersurface, and $B_Z = (\sp \, (A[x]) \to \sp \,(A), ( x^{30}), 24, \emptyset)$. The subscheme $C$ of $Z$, defined by 
 $(x) A[x]$ on $Z$ and by $(x,z) $ on $W$, is a $B_Z$-permissible center but not a $B$-permissible one.

\end{exa}
\begin{Def}
\label{D:strong}
 We use the notation of \ref{V:is6.0}. A closed subscheme $C$ of $Z$ is a {\it strongly permissible} center for $B_Z$ (or a {\it strong $B_Z$-center}) if  it is a permissible center for both $B_Z$ and $B$. 
\end{Def}
\begin{voi} 
\label{V:is10.0}
We extend this notion to sequences as follows. Let $B$ be a nice $S$-basic object, $Z$ a $B$-inductive hypersurface. We use the notation of \ref{V:is6.0}. A permissible sequence of $S$-basic objects: 
$$(1) \qquad B_Z:= (B_Z)_{0} \leftarrow  (B_Z)_{1} \leftarrow \cdots \leftarrow \ (B_Z)_{m}$$ 
(with centers $C_i \subset us((B_Z)_i)$, $i=0, \ldots m-1$) is said to be {\it strongly permissible} if: $C_0$ is a strongly permissible
 $(B_Z)_0$-center (hence, $C_0$ is a $B$-center and if $B_1:={\cT}(B,C_0)$,  $us((B_Z)_1)$
 may be identified, to the strict transform $Z_1$ of $Z$ to $us(B_1)$, and $C_1$ to a closed subscheme of $us(B_1)$),  $C_1$ is a 
 $B_1$-permissible center; if $B_2:={\cT}(B_1,C_1)$, with similar identifications $C_2$ is a permissible $B_2$-center, and so on. So, eventually, with these identifications, $C_i$ must be a $B_i$ center, $i=0, \ldots, m-1$. 
 
 Thus, the strongly permissible sequence (1) induces a permissible sequence of $S$-basic objects 
 $$(2) \quad B \leftarrow  B_1 \leftarrow \cdots \leftarrow \ B_m $$
 using the same centers $C_i$ as in (1).
 
 When $A=k=$ a field, according to \ref{V:is8.0} a center is permissible if and only if it is strongly permissible.
 \end{voi}

\section{The homogenized ideal and applications}
\label{S:HI}

In this section we present, adapted to  our needs, some notions and results due to Wlodarzcyck (cf. \cite{W}, where one works over a field). Throughout, $S= {\rm Spec} \, (A)$, $A \in \cA$, as in \ref{V:bn1.0}. 

\begin{Def} 
\label{D:wid} Given a scheme $W$, a {\it weighted $W$-ideal} is a pair $(I,b)$, where $I \subset {\cO}_W$ is $W$-ideal and $b$ is a non-negative integer. Often, if $W$ is clear, we'll talk about a weighted ideal.

\end{Def}
\begin{Def} 
\label{D:ho} 
Let $(I,b)$ be a weighted $W$-ideal, where $W$ is a scheme, smooth over $S$. The {\it homogenized ideal, relative to $S$,  associated to} $(I,b)$ is the the $W$-ideal $\cH (I/S,b)= I + \Delta (I/S) T(I/S) + \cdots +  \Delta ^i(I/S) T(I/S) ^i + \cdots + \Delta ^{b-1}(I/S) T(I/S)^{b-1}$,  
where we have written $T(I/S):= {\Delta}^{b-1}(I/S)$. If $B=(W \to S, I,b, E)$ is a  $A$-basic object, we use the notation $\cH (B):=(W \to S, \cH (I/S,b), b,E)$ (a new $A$-basic object).
\end{Def}
\noindent
{Note that passing to the  fiber we obtain, in the usual notation,  
$\cH(I^{(0)},b)=\cH(I/S,b)^{(0)}$.}
\begin{lem}
\label{L:Tdelta}
If $(I,b)$ is a weighted ideal on an $S$-smooth scheme $W$, then 
${\Delta}^{b-1}(I/S) = {\Delta}^{b-1}({\cH}(I/S,b))$
\end{lem}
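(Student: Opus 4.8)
The plan is to prove the two inclusions separately, exploiting the symmetry of the situation. The easy direction is $\Delta^{b-1}(\cH(I/S,b)) \subseteq \Delta^{b-1}(I/S)$: indeed, by definition each summand $\Delta^i(I/S)T(I/S)^i$ of $\cH(I/S,b)$ (for $0 \le i \le b-1$) is contained in $\Delta^i(I/S) \subseteq \Delta^{b-1}(I/S)$, since $\Delta$ is an increasing operation and $i \le b-1$ and $T(I/S) \subseteq \cO_W$. Hence $\cH(I/S,b) \subseteq \Delta^{b-1}(I/S)$. Now apply $\Delta^{b-1}$ to both sides; since $\Delta^{b-1}$ is monotone we would like $\Delta^{b-1}(\Delta^{b-1}(I/S)) = \Delta^{b-1}(I/S)$, but that is false. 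Instead I would argue directly: $\Delta^{b-1}$ of a $W$-ideal $J$ is generated (in the relevant completions, cf. \ref{V:bn8.0}) by $J$ together with partial derivatives of order $\le b-1$ of elements of $J$. Since $\cH(I/S,b) \subseteq \Delta^{b-1}(I/S) =: T$, and $T$ already contains the order-$\le b-1$ derivatives of $I$, one checks that $\Delta^{b-1}(\cH(I/S,b)) \subseteq \Delta^{b-1}(T)$; so it suffices to observe $\Delta^{b-1}(T) \supseteq T$ is the wrong direction — rather I should note that since $\cH(I/S,b) \supseteq I$ as well, we get $\Delta^{b-1}(\cH(I/S,b)) \supseteq \Delta^{b-1}(I/S) = T$, and I will use this for the reverse inclusion. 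For the inclusion $\Delta^{b-1}(\cH(I/S,b)) \subseteq T$ I would instead show directly, term by term, that $\Delta^{b-1}(\Delta^i(I/S)T(I/S)^i) \subseteq T$ using the Leibniz rule.

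Concretely, the key computation is a Leibniz-rule estimate. Work locally in a completion $R^* = A'[[x_1,\dots,x_n]]$ as in \ref{V:bn8.0}, so that $\Delta^{b-1}$ of an ideal is generated by that ideal together with all iterated partials $\partial^\alpha f$, $|\alpha| \le b-1$, of its elements. Take a generator of $\cH(I/S,b)$ of the form $g = a_0\, h_1 \cdots h_i$ where $a_0 \in \Delta^i(I/S)$ and $h_1,\dots,h_i \in T(I/S) = \Delta^{b-1}(I/S)$. Applying $\partial^\alpha$ with $|\alpha| \le b-1$ and expanding by the generalized Leibniz rule, every term is a product of a derivative $\partial^{\beta_0} a_0$ (with $|\beta_0| \le |\alpha| \le b-1$) and derivatives $\partial^{\beta_1} h_1 \cdots \partial^{\beta_i} h_i$ with $\sum |\beta_j| \le b-1$. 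Now $\partial^{\beta_0} a_0 \in \Delta^{i + |\beta_0|}(I/S)$, and each $\partial^{\beta_j} h_j \in \Delta^{b-1 + |\beta_j|}(I/S) \subseteq \Delta^{b-1}(I/S) = T$ for $j \ge 1$ (or $h_j$ itself if $\beta_j = 0$, still in $T$). Since $i \ge 1$ forces at least one factor $h_j \in T$, or if $i = 0$ then $\partial^{\beta_0}a_0 \in \Delta^{|\beta_0|}(I/S)$ — here I must be slightly careful with the $i=0$ summand, which is just $I$, and $\Delta^{b-1}(I/S) = T$ trivially. For $i \ge 1$, the product lies in $T \cdot (\text{something in } \cO_W) \subseteq T$. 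Hence $\partial^\alpha g \in T$, so $\Delta^{b-1}(\cH(I/S,b)) \subseteq T = \Delta^{b-1}(I/S)$.

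The reverse inclusion $\Delta^{b-1}(I/S) \subseteq \Delta^{b-1}(\cH(I/S,b))$ is immediate once we note $I \subseteq \cH(I/S,b)$ (the $i=0$ term), so by monotonicity of $\Delta^{b-1}$ we get $\Delta^{b-1}(I/S) \subseteq \Delta^{b-1}(\cH(I/S,b))$. Combining the two inclusions gives the equality. I would remark that the whole argument is local and the passage between the intrinsic Fitting-ideal definition of $\Delta$ in \ref{V:bn8.0} and the concrete description via partials in the completion $R^*$ is exactly what makes the Leibniz computation legitimate; the formula is stable under the étale localizations and completions used here.

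The main obstacle is purely bookkeeping in the Leibniz expansion: one must keep track of how the order index shifts under differentiation ($\partial$ lowers the $\Delta$-exponent needed, i.e. $\partial^\beta(\Delta^j) \subseteq \Delta^{j + |\beta|}$ is the wrong way — actually $\Delta^{j+|\beta|}(I/S) \supseteq \partial^\beta(\text{generators of }\Delta^j(I/S))$, which is what we want) and verify that in every term of the expansion of $\partial^\alpha(a_0 h_1\cdots h_i)$ with $i \ge 1$ at least one factor survives inside $T = \Delta^{b-1}(I/S)$ because the total derivative budget $|\alpha| \le b-1$ cannot simultaneously destroy the $\Delta^{b-1}$-membership of all $i$ factors $h_j$ while also... — in fact each $h_j \in \Delta^{b-1}(I/S)$ has "$b-1$ derivatives to spare," so any $\partial^{\beta_j}$ with $|\beta_j| \le b-1$ keeps $\partial^{\beta_j}h_j$ inside $\Delta^{b-1}(I/S)$ after noting $\Delta^{b-1+|\beta_j|}(I/S) \cap$ generators — one should instead phrase it as: $h_j \in T$ and $\partial^{\beta_j} h_j \in \Delta^{|\beta_j|}(T) \subseteq \Delta^{b-1}(T)$, then separately show $\Delta^{b-1}(T) \subseteq T$... which again is circular. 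The cleanest fix, which I would adopt in the final write-up, is to observe that $T(I/S) = \Delta^{b-1}(I/S)$ satisfies $\Delta(T(I/S)) \subseteq \Delta^b(I/S)$, and more relevantly that the $W$-ideal $\cH(I/S,b)$ is sandwiched as $I \subseteq \cH(I/S,b) \subseteq \Delta^{b-1}(I/S)$, and that for any pair of ideals $I \subseteq J \subseteq \Delta^{b-1}(I/S)$ one automatically has $\Delta^{b-1}(J) = \Delta^{b-1}(I/S)$ — this last is a clean lemma (monotonicity gives $\supseteq$ from $I \subseteq J$, and the Leibniz/chain computation above gives $\Delta^{b-1}(\Delta^{b-1}(I/S)) \subseteq \Delta^{b-1}(I/S)$ when applied to products, hence $\subseteq$ from $J \subseteq \Delta^{b-1}(I/S)$). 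Isolating and proving that sandwich lemma first is the efficient route.
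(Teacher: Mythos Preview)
The paper omits the proof (``elementary and left to the reader''), so there is nothing to compare your route against; the question is only whether your argument is correct.

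Your direct Leibniz computation is on the right track but your write-up never nails down the decisive observation, and your proposed ``clean sandwich lemma'' at the end is actually \emph{false}. The claim ``$I \subseteq J \subseteq \Delta^{b-1}(I/S)$ implies $\Delta^{b-1}(J) = \Delta^{b-1}(I/S)$'' fails already for $I=(x^2)$, $b=2$, $J=(x)$ in $k[x]$: here $\Delta(I)=(x)$ so $I\subseteq J\subseteq\Delta(I)$, yet $\Delta(J)=(1)\neq(x)$. Equivalently, $\Delta^{b-1}(\Delta^{b-1}(I/S))=\Delta^{2(b-1)}(I/S)$, which is strictly larger than $\Delta^{b-1}(I/S)$ in general; your caveat ``when applied to products'' does not rescue this, since a general element of $\Delta^{b-1}(I/S)$ is not a product of the special shape you use. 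So drop the sandwich lemma entirely.

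What does work is the following clean dichotomy in the Leibniz expansion, which your middle paragraph circles around without stating. Fix $i\ge 1$ and a generator $g=a_0\,h_1\cdots h_i$ with $a_0\in\Delta^i(I/S)$ and each $h_j\in T:=\Delta^{b-1}(I/S)$. For a multi-index $\alpha$ with $|\alpha|\le b-1$, each Leibniz term has the form $(\partial^{\beta_0}a_0)(\partial^{\beta_1}h_1)\cdots(\partial^{\beta_i}h_i)$ with $\sum_j|\beta_j|\le b-1$. Now:
\begin{itemize}
\item either some $\beta_j=0$ for $j\ge 1$, so the undifferentiated factor $h_j\in T$ forces the whole product into the ideal $T$;
\item or every $\beta_j$ with $j\ge 1$ has $|\beta_j|\ge 1$, hence $|\beta_0|\le (b-1)-i$, whence $\partial^{\beta_0}a_0\in\Delta^{\,i+|\beta_0|}(I/S)\subseteq\Delta^{b-1}(I/S)=T$, and again the product lies in $T$.
\end{itemize}
The case $i=0$ is just $\Delta^{b-1}(I)\subseteq T$ by definition. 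Together with the trivial inclusion $\Delta^{b-1}(I/S)\subseteq\Delta^{b-1}(\cH(I/S,b))$ coming from $I\subseteq\cH(I/S,b)$, this finishes the proof. This is presumably the ``elementary'' argument the paper has in mind; the point is that the exponent $i$ on $T$ exactly matches the $\Delta$-index of the coefficient $a_0$, and that balance is what makes the dichotomy close up.
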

The proof is elementary and we leave it to the reader.
\begin{pro} 
\label{P:wide} If $B=(W \to S, I,b, E)$ is a good $A$-basic object (see \ref{V:det4.0}), then $\cH (B)$ is W-equivalent to $B$.
\end{pro}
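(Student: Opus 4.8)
### Proof Plan for Proposition \ref{P:wide}

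The plan is to show that $B = (W \to S, I, b, E)$ and $\cH(B) = (W \to S, \cH(I/S,b), b, E)$ are W-equivalent, meaning that they have the same permissible centers at every stage of any sequence of permissible transformations, and that this agreement also holds at the level of the closed fibers. The key structural observation is Lemma \ref{L:Tdelta}: since $\Delta^{b-1}(I/S) = \Delta^{b-1}(\cH(I/S,b))$, the two basic objects have the same singular locus, because $\sg(B) = V(\Delta^{b-1}(I/S))$ by the discussion in \ref{V:ag3} (adapted to the relative setting via \ref{V:bn8.0}). Moreover, because $B$ is good, by Remark \ref{R:cegood} a center $C$ for the underlying pair is $B$-permissible if and only if $\Delta^{b-1}(I/S) \subseteq I(C)$, with no need to separately verify the order condition $\nu(I,C) = \nu(I^{(0)}, C^{(0)})$. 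The same criterion applies to $\cH(B)$ provided $\cH(B)$ is also good; this follows since $\cH(B)$ has the same singular locus as $B$ and, for $w \in \sg(B) = \sg(\cH(B))$, one has $\nu_w(\cH(I/S,b)) = b$ because $I \subseteq \cH(I/S,b) \subseteq \Delta^{b-1}(I/S)^{\,?}$-type containments pin the order down — more carefully, $I \subseteq \cH(I/S,b)$ forces $\nu_w(\cH(I/S,b)) \le \nu_w(I) = b$, and the reverse inequality $\ge b$ holds on all of $\sg(\cH(B))$ by Lemma \ref{L:Tdelta} since $w \in V(\Delta^{b-1}(\cH(I/S,b)))$. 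Hence both objects are good and share the permissibility criterion $\Delta^{b-1}(-/S) \subseteq I(C)$, so they have exactly the same permissible centers.

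Next I would check that this agreement propagates under transformation, which is the inductive content of W-equivalence. Suppose $C$ is a common permissible center and let $B_1 = \cT(B, C)$, $\cH(B)_1 = \cT(\cH(B), C)$ be the transforms, both with underlying scheme the blow-up $W_1$ of $W$ along $C$. I need that $B_1$ is again good and that $\Delta^{b-1}(I_1/S) = \Delta^{b-1}((\cH(I/S,b))_1/S)$, where $I_1$ and $(\cH(I/S,b))_1$ denote controlled transforms. The equality of the $(b-1)$-st differential operators applied to controlled transforms is precisely the kind of statement controlled by Giraud-type lemmas; here the relevant tool is Lemma \ref{L:preg}, part (ii), which gives $I(H)^{-i}\Delta^{b-i}(I/S)\cO_{W_1} \subseteq \Delta^{b-i}(I_1/S)$, together with the homogenization identity. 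The crucial point — and this is really Wlodarczyk's observation transplanted to the relative setting — is that the homogenized ideal is, up to the operation of taking $\Delta^{b-1}$, stable under controlled transform in a way that is compatible with homogenization: one has $\cH(I_1/S, b) = (\cH(I/S,b))_1$, or at least these two weighted ideals have the same $\Delta^{b-1}$ and hence the same singular locus and permissible centers. I would verify this by a local computation in completions, using the description of $\Delta^i(-/S)$ in \ref{V:bn8.0} (via partial derivatives in an $A$-regular system of parameters), which reduces the relative statement to the absolute one already available in \cite{W}; the reduction is legitimate because of the remarks in \ref{V:bn7.0} on completions of local rings relative to $S$. Iterating gives the pre-equivalence of $B$ and $\cH(B)$ over $S$.

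Finally, W-equivalence additionally demands pre-equivalence of the closed fibers $B^{(0)}$ and $\cH(B)^{(0)}$. By the remark following Definition \ref{D:ho}, $\cH(B)^{(0)} = (W^{(0)}, \cH(I^{(0)}, b), b, E^{(0)})$, i.e., the fiber of the homogenization is the homogenization of the fiber. Since $B$ is good, $B^{(0)}$ is good by definition, so the corresponding statement over the field $k = A/M$ is exactly Wlodarczyk's theorem that a weighted ideal and its homogenization are equivalent (\cite{W}), or can be proved by repeating the argument of the previous paragraph with $A$ replaced by $k$. This finishes the proof.

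I expect the main obstacle to be the stability statement $\cH(I_1/S,b) \equiv (\cH(I/S,b))_1$ (equivalence up to $\Delta^{b-1}$) under a permissible transformation: this is where one must genuinely control how the auxiliary ideal $T(I/S) = \Delta^{b-1}(I/S)$ transforms, and where the interplay between controlled transforms, the exceptional divisor, and the differential operators $\Delta^i(-/S)$ has to be handled carefully. In the absolute (over a field) case this is the technical heart of \cite{W}; over an artinian ring $A$ the content is the same, but one must be sure that passing to completions relative to $S$ — rather than absolute completions — does not break any of the needed identities. The saving grace is \ref{V:bn7.0}, which tells us $R^* \cong R'[[x_1,\dots,x_r]]$ with $R' = \cO_{C,w}$, so that all the power-series manipulations and derivative computations go through verbatim with coefficients in $R'$ in place of a field; modulo that observation, no new idea beyond \cite{W} is required.
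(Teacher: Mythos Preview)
Your proposal is correct and follows essentially the same strategy as the paper: both use Lemma \ref{L:Tdelta} together with Remark \ref{R:cegood} for the base step, and both reduce the inductive step to showing that $\Delta^{b-1}(I_r/S) = \Delta^{b-1}\bigl([\cH(I/S,b)]_r\bigr)$ after any sequence of common permissible transforms, invoking Lemma \ref{L:preg} as the key technical input.

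The one place where the paper is sharper is the mechanism for that inductive equality. You propose to check ``$\cH(I_1/S,b) = (\cH(I/S,b))_1$, or at least the same $\Delta^{b-1}$'' by local computations in completions and reduction to the field case via \ref{V:bn7.0}. The paper instead argues entirely over $A$ by a clean sandwich: it proves directly the inclusion $[\cH(I/S)]_r \subseteq \cH(I_r/S)$ (computing the controlled transform of each summand $\Delta^i(I_{r-1}/S)\,T(I_{r-1}/S)^i$ and applying Lemma \ref{L:preg}(ii) to each factor), then combines this with the obvious inclusion $I_r \subseteq [\cH(I/S)]_r$ to get $I_r \subseteq [\cH(I/S)]_r \subseteq \cH(I_r/S)$. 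Applying $\Delta^{b-1}$ throughout and using Lemma \ref{L:Tdelta} on the two ends forces all three to have the same $\Delta^{b-1}$. This avoids any passage to completions or appeal to the field case for the pre-equivalence over $A$; your route would also work, but the paper's inclusion chain is more economical and makes the role of Lemma \ref{L:preg} completely explicit.
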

\begin{proof} 
By Remark \ref{R:cegood} and  Lemma \ref{L:Tdelta} a center $C$ is $B$-permissible if and only if it is $\cH (I)$-permissible.  By induction, by the same reason it suffices to show that, for all $ r > 0$, if 
$B \leftarrow B_1 \leftarrow  \cdots \leftarrow B_r$ and 
$\cH (B) \leftarrow [\cH (B)]_1 \leftarrow  \cdots \leftarrow [\cH (B)]_r$ 
are permissible sequences of basic objects and transformations, where in both cases the same permissible centers have been used, then 
$$(1) \qquad {\Delta}^{b-1}(I_r/S) = {\Delta}^{b-1}([{{\cH}(I/S,b)}]_r)$$
(where 
${{\cH}(I/S,b)}_r$ is the ideal in the basic object 
$\cH (B)_r$). To simplify, sometimes we'll drop ``$b$'', writing, e.g., $\cH (I/S,b)=\cH (I/S)$. 
 
First we shall check that 
$$(2) \qquad [{\cH}(I/S)]_r \subseteq {\cH}(I_r/S)$$ 
To see this,  consider 
 $ [{\cH}(I/S]_{r-1} = I_{r-1} + \sum_{i=1}^{b-1} {\Delta}^i (I_{r-1}/S) T(I_{r-1}/S)^i  $. The controlled transform of this ideal to $W_r$ is 
$$[{\cH}(I/S)]_r = I_{r} + \sum_{i=1}^{b-1} [{\Delta}^i (I_{r-1}/S) T(I_{r-1}/S)^i]_1.$$ 
But, letting $H$ be the exceptional divisor of the blowing up $W_{r-1} \leftarrow W_r$ and $\cE :=I(H)$, 
 $[{\Delta}^i (I_{r-1}/S) T(I_{r-1}/S^i]_1= {\cE}^{-(b-i)}{\Delta}^i(I/S)[{\cE}^{-1}T(I/S)]^i \subseteq 
 {\Delta}^i(I_r/S) [T(I_r/S)]^i$, the last inclusion by \ref{L:preg}. From this (2) clearly follows. So, we have inclusions 
 $I_r \subseteq {\cH}(I/S,b)_r   \subseteq \cH (I_r/S,b)$. Applying the operator 
 ${\Delta}^{b-1}$ to each of these and using \ref{L:Tdelta} we see that the resulting first and third terms are equal, hence we obtain (1).
 \end{proof}
 
  We shall need the following basic result.
 \begin{pro}
 \label{P:twoids}
 Let  $B=(W \to S, I, b, E)$ be an $A$-basic object, $y$ a point of $W$, 
 $R={\cO}_{W,y}$, $a_1, \ldots, a_r$ an $A$-regular system of parameters of $R$, $M$ = r($A$) and  
 $J = \\
 MR + (a_1, \ldots, a_r) R$. Then, $J$ is the maximal ideal of $R$.
 \end{pro}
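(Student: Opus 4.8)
The plan is to pass to the quotient $R/MR$, identify it with the local ring of the closed fiber, and observe that there the statement is nothing but the fact that the reductions of the $a_i$ generate the maximal ideal --- which is precisely what the definition of an $A$-regular system of parameters asserts.

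First I would record two elementary facts. Since $A$ is artinian local, its maximal ideal $M$ is nilpotent, so $MR$ is a nilpotent ideal of $R$; in particular $MR \subseteq r(R)$, and therefore $R/MR$ is again a (noetherian) local ring, with maximal ideal $r(R)/MR$. Secondly, the closed fiber $W^{(0)} = W_{red}$ is the closed subscheme of $W$ defined by $M\cO_W$ (see \ref{V:bn2.0}), so there is a canonical identification $R/MR = \cO_{W^{(0)},y} = R'$, and this is exactly the identification implicit in \ref{V:bn3.0}, under which $a_i$ is sent to $a_i^{(0)}$.

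Next I would conclude as follows. Let $\pi\colon R \to R/MR = R'$ be the quotient map, so $\ker \pi = MR$. By hypothesis and the definition in \ref{V:bn3.0}, the elements $a_1^{(0)} = \pi(a_1), \ldots, a_r^{(0)} = \pi(a_r)$ form a regular system of parameters of the regular local ring $R'$; in particular they generate its maximal ideal $r(R')$. Hence $\pi(J) = \pi(MR) + (a_1^{(0)}, \ldots, a_r^{(0)})R' = (a_1^{(0)}, \ldots, a_r^{(0)})R' = r(R')$. Since $MR = \ker\pi$ is contained in $J$, we have $J = \pi^{-1}(\pi(J)) = \pi^{-1}(r(R')) = r(R)$, which is the assertion.

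There is no genuine obstacle in this argument; the only point requiring a little care is to use that $a_1, \ldots, a_r$ is a \emph{full} $A$-regular system of parameters, not merely part of one, so that the reductions $a_i^{(0)}$ actually generate $r(R')$ and not just lie inside it --- for a partial system the conclusion would fail, as one sees already in very small examples.
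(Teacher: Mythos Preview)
Your proof is correct and in fact more direct than the paper's. Both arguments reduce to the same key observation --- that the images $a_i^{(0)}$ generate the maximal ideal of $R^{(0)}=R/MR$ --- but the paper reaches it by passing to the $J$-adic completion $R'$ of $R$, computing $R'/JR'$ as a quotient of $\widehat{R^{(0)}}$, and concluding that $JR'$ is maximal (hence $JR'=J'R'$, whence $J=J'$ by faithful flatness). You instead work directly with the quotient map $\pi:R\to R/MR$ and use the ideal correspondence: since $\ker\pi=MR\subseteq J$, the equality $\pi(J)=r(R')$ immediately gives $J=\pi^{-1}(r(R'))=r(R)$. Your route avoids completions entirely and is the natural elementary argument; the paper's approach gains nothing here, though it is written in a style consistent with the completion techniques used in the surrounding results (e.g., \ref{P:twocomp}).
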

 \begin{proof}
 Let $J':={\mathrm{max}}(R)$. We have $J \subseteq J'$. To see that equality holds, letting $R'$ denote the completion of $R$ with respect to $J$, and using the fact that $R'$ is a flat $R$-algebra, it suffices to show that $JR=J' R'$. Now, we have the following identifications: 
 $R'/JR' = (R'/MR')/(a_1, \ldots,a_r)(R'/MR') = {\widehat{(R/MR)}}/(a_1, \ldots, a_r){\widehat{(R/MR)}}=
 {\widehat{(R^{(0)})}}/{\mathrm {\mathrm max}}{\widehat{(R^{(0)})}}$, where the hat indicates completion with respect to the maximal ideal. The last equality holds because  
 $a_1, \ldots, a_r$ induces a regular system of parameters in the regular local ring 
 $R^{(0)}={\cO}_{W^{(0)},y}$, in particular it generates the maximal ideal of $R^{(0)}$. So, 
 $R'/JR' $ is a field, hence $JR'$ must be equal to the maximal ideal $J'R'$, as claimed.
 \end{proof}
  \begin{pro}
 \label{P:twocomp}
 With the notation of the previous proposition, letting $\widehat{R}$ denote the completion of $R$ with respect to its maximal ideal and $R'$ the completion with respect to 
 $(a_1, \ldots, a_r)R$, we have 
 $R'=  {
 \widehat{R}}$.
 \end{pro}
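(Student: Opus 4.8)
The plan is to deduce this from the preceding proposition \ref{P:twoids}, which identifies the maximal ideal of $R$ as $r(R) = MR + J_0$, where I write $J_0 := (a_1,\ldots,a_r)R$. Everything then comes down to checking that the $J_0$-adic and the $r(R)$-adic filtrations of $R$ are mutually cofinal, so that they define the same topology on $R$ and hence the same completion; the identification $R' = \widehat{R}$ follows by passing to inverse limits.

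I would carry this out in two steps. First, $J_0 \subseteq r(R)$ is immediate from \ref{P:twoids}, whence $J_0^m \subseteq r(R)^m$ for every $m$. Second, for the reverse cofinality I use that $A$ is artinian, so its maximal ideal $M$ is nilpotent, say $M^N = 0$, and therefore $(MR)^N = M^N R = 0$. Expanding $r(R)^m = (MR + J_0)^m$ as a sum of products $(MR)^i J_0^{m-i}$ (with $i$ running from $0$ to $m$), the summands with $i \ge N$ vanish, and for $i \le N-1$ one has $J_0^{m-i} \subseteq J_0^{\,m-N+1}$; hence $r(R)^m \subseteq J_0^{\,m-N+1}$ for all $m \ge N-1$. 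The two inclusions together give the cofinality, and the standard fact that cofinal ideal filtrations of a ring produce canonically isomorphic completions then yields $R' = \widehat{R}$.

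I do not expect any genuine obstacle here: the argument is entirely formal once \ref{P:twoids} is available. The only points needing a little care are the bookkeeping in the expansion of $(MR + J_0)^m$ (getting the exponent $m - N + 1$ and the threshold $m \ge N-1$ right) and, in the last step, the invocation of the elementary lemma that two cofinal ideal filtrations yield the same inverse limit — a fact one may regard as standard, or establish in a line by writing down mutually inverse maps between the two inverse systems after suitable index shifts.
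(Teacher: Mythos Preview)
Your proof is correct and follows essentially the same approach as the paper: both argue that the $J_0$-adic and $r(R)$-adic filtrations are cofinal, using that $M$ is nilpotent to obtain inclusions of the form $r(R)^m \subseteq J_0^{m-s}$ for large $m$. You in fact supply the explicit exponent bookkeeping that the paper leaves to the reader.
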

 \begin{proof}
 Using the fact that $A$ is artinian and hence a power of its maximal ideal $M$ is equal to zero, it is easy to verify that there is a positive integer $s$ such that for all sufficiently large integers $t$ we have  
 $Q^t \subseteq J^t \subseteq Q^{t-s}$, 
 where $Q=(a_1, \ldots, a_r)R$ and $J = MR + (a_1, \ldots, a_r) R=r(R)$ (use \ref{P:twoids}). It is well known that these inclusions imply that the topologies defined by $Q$ and $J$ are the same, whence the claimed equality of completions. 
 \end{proof}
 \begin{pro}
 \label{P:elauto}
 Let $B$, the point $y \in W$ and $R={\cO}_{W,y}$ be as in \ref{P:twoids}, let both $u,u_2, \ldots, u_n$ and $v, u_2, \ldots, u_n$ be $A$-regular systems of parameters of $R$, $\widehat R$ the completion of $R$ with respect to its maximal ideal ${\mathcal M}$. Then, there is an $A$-linear automorphism $\phi$ of $\widehat R$ such that 
 $\phi (u) = v$ and $\phi (u_i)=u_i$ for $i=2, \ldots, n$ and, moreover, such that if $h=u-v$ and 
 $p:{\widehat R} \to     {\widehat R}/(h)$ is the canonical quotient homomorphism, then $
 p ~ \phi = p$.   
  \end{pro}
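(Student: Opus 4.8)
The plan is to realize $\widehat R$ as a power series ring over $A$ in two ways, one adapted to each of the two given $A$-regular systems of parameters, and to let $\phi$ be the resulting change of coordinates; the requested properties are then immediate.

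First I would record the two presentations of $\widehat R$. Since $u,u_2,\dots,u_n$ is an $A$-regular system of parameters, it is a regular sequence on $R$ (\ref{V:bn3.0}) whose image in $R^{(0)}=R\otimes_A k$ is a regular system of parameters of the regular local ring $R^{(0)}$; as $R$ is $A$-flat ($p$ smooth), the local criterion of flatness shows that $R/(u,u_2,\dots,u_n)R$ is $A$-flat with special fibre $k$, hence $R/(u,u_2,\dots,u_n)R=A$. Combining this with \ref{P:twocomp} and \ref{V:bn7.0}, one obtains a continuous $A$-algebra isomorphism $\iota_1\colon A[[X_1,\dots,X_n]]\xrightarrow{\ \sim\ }\widehat R$ with $\iota_1(X_1)=u$ and $\iota_1(X_i)=u_i$ for $i=2,\dots,n$. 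The identical argument applied to the $A$-regular system $v,u_2,\dots,u_n$ yields a continuous $A$-algebra isomorphism $\iota_2\colon A[[X_1,\dots,X_n]]\xrightarrow{\ \sim\ }\widehat R$ with $\iota_2(X_1)=v$ and $\iota_2(X_i)=u_i$.

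Then I would set $\phi:=\iota_2\circ\iota_1^{-1}$, a continuous $A$-algebra automorphism of $\widehat R$ (in particular $A$-linear). By construction $\phi(u)=\iota_2(\iota_1^{-1}(u))=\iota_2(X_1)=v$ and $\phi(u_i)=\iota_2(\iota_1^{-1}(u_i))=\iota_2(X_i)=u_i$, which is the first assertion. For the statement about $p$, observe that $\phi$ and $\mathrm{id}_{\widehat R}$ are continuous $A$-algebra endomorphisms of $\widehat R$ which agree modulo the ideal $(h)$, $h=u-v$, on the topological generators $u,u_2,\dots,u_n$ of $\widehat R$ over $A$: indeed $\phi(u)-u=v-u=-h\in(h)$ and $\phi(u_i)-u_i=0$. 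Since $(h)$ is a closed ideal of the complete Noetherian local ring $\widehat R$, writing an arbitrary $z\in\widehat R$ as a limit of polynomials in $u,u_2,\dots,u_n$ with coefficients in $A$ gives $\phi(z)\equiv z\pmod{(h)}$; that is, $p\circ\phi=p$.

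The only point that needs a little care is the identification $R/(u,u_2,\dots,u_n)R\cong A$ (and its analogue for $v$) used to produce $\iota_1$ and $\iota_2$: this is precisely where the smoothness of $p$ and the artinian hypothesis on $A$ intervene, through flatness and the local criterion of flatness. Once the two presentations of $\widehat R$ are in hand, everything else is formal.
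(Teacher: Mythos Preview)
Your overall strategy---present $\widehat R$ as a power series ring and let $\phi$ be the resulting change of coordinates---is exactly the paper's approach, but there is a real gap in your identification of the coefficient ring.

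You assert that $R/(u,u_2,\dots,u_n)R\cong A$, arguing that its special fibre is $k$. In fact the special fibre is $R^{(0)}/(u^{(0)},u_2^{(0)},\dots,u_n^{(0)})R^{(0)}$, which is the residue field $k(y)$ of the regular local ring $R^{(0)}=\mathcal{O}_{W^{(0)},y}$; since $y$ is an arbitrary point of $W$ (the statement, via \ref{P:twoids}, does not assume $y$ closed, let alone $k$-rational), one only has $k(y)\supseteq k$. Thus $R/(u,u_2,\dots,u_n)R$ is in general a local $A$-algebra $A'$ with residue field $k(y)$, not $A$. This breaks your argument in two places. First, the coefficient rings for the two presentations are lifts of $R/(u,u_2,\dots,u_n)$ and $R/(v,u_2,\dots,u_n)$ respectively, and these are quotients by \emph{different} ideals whenever $h=u-v\notin(u,u_2,\dots,u_n)R$ (e.g.\ $A=k[\epsilon]$, $R=A[x]_{(x,\epsilon)}$, $u=x$, $v=x+\epsilon$); so $\iota_2\circ\iota_1^{-1}$ is not well-defined without further argument. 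Second, your verification of $p\circ\phi=p$ uses that $u,u_2,\dots,u_n$ topologically generate $\widehat R$ as an $A$-algebra, which again fails when $A'\neq A$.

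The paper sidesteps both issues by using a \emph{single} presentation $\widehat R=A'[[u,u_2,\dots,u_n]]$ (with the correct coefficient ring $A'$, via \ref{P:preries} and \ref{P:twocomp}) and defining $\phi$ directly as the $A'$-algebra endomorphism given by the substitution $u\mapsto v$, $u_j\mapsto u_j$ for $j\ge 2$. That $\phi$ is an automorphism is then the nontrivial step: one reduces modulo $r(A)$, where $\phi$ becomes an evident isomorphism of power series rings over a field, and concludes by Nakayama. The relation $p\circ\phi=p$ then follows because $\phi$ is $A'$-linear and $\phi(u_j)-u_j\in(h)$ for every $j$. Once you repair the coefficient-ring issue by fixing one $A'$ and defining $\iota_2$ over that same $A'$, you are forced to check that this $\iota_2$ is an isomorphism---which is precisely the paper's computation.
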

 \begin{proof}
Consider the completion $R^*$ of $R$ with respect to the ideal generated by $(u,u_2, \ldots, u_n)$. We know: 
(i) $R^* = A'[[u,u_2,\ldots, u_n]]$, for a suitable over-ring $A'$ of $A$ (see 
\ref{V:bn7.0} or \ref{P:preries}), 
(ii) $R^* = \widehat R$ (see \ref{P:twocomp}.) Using the identification (ii), to prove the proposition it suffices to define an automorphism $\phi$ of $R^*$ satisfying the required properties. Define $\phi$ by the conditions: $\phi (a) = a$ for $a \in A'$, 
$\phi (u) =v$ and  $\phi(u_j)=u_j$, $j = 2, \ldots, n$. This is correct, since $v$ , regarded as an element of $R^*$, must be of the form 
$v=m + \alpha $, with $m \in MR^*$ ($M=r(A)$), $\alpha \in (u,u_2, \ldots,u_n)R^*$, and $M^s=0$ for $s$ large enough. Modulo $MR^*$, this induces a homomorphism of rings of formal power series 
$K[[u,u_2, \ldots, u_n]] \to K[[v,u_2, \ldots, u_n]]$ (with $K$ a field), which clearly is an isomorphism. By the ``nilpotent'' Nakayama's Lemma, $\phi$ must be an isomorphism. The statement about the quotient map $p$ follows from the fact that 
$ u - \phi (u) = h$.
\end{proof}
\begin{pro}
\label{P:hompl}
Keep the assumptions and notation of \ref{P:elauto}, but also assume that  $u \in T(I/S)_y$ and $v \in T(I/S)_y$. Then, the automorphism $\phi$ satisfies: 
$\phi (\cH(I/S) {\widehat {R}})=\cH(I/S) {\widehat {R}} $. 
\end{pro}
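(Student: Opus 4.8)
The plan is to adapt Wlodarczyk's proof (\cite{W}) of the coordinate-independence of the homogenized ideal; the only genuinely new feature is that every computation is performed in the completion $\widehat{R}$, where, by \ref{V:bn7.0}, \ref{V:bn8.0} and \ref{P:twocomp}, the relative differential operators $\Delta^{j}(I/S)$ are computed exactly as in the absolute case, the coefficient ring $A'$ of the presentation $\widehat{R}=A'[[u,u_{2},\dots,u_{n}]]$ (see \ref{P:elauto}) playing the role of the ring of constants. In particular, by \ref{V:bn8.0} the operator $\partial_{u}=\partial/\partial u$ carries $\Delta^{j}(I/S)\widehat{R}$ into $\Delta^{j+1}(I/S)\widehat{R}$ for every $j$.

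First I would isolate the statement to be proved: if $\psi$ is a continuous $A'$-algebra automorphism of $\widehat{R}=A'[[u,u_{2},\dots,u_{n}]]$ fixing $u_{2},\dots,u_{n}$ and with $\psi(u)-u\in T(I/S)_{y}\widehat{R}$, then $\psi(\cH(I/S)\widehat{R})\subseteq\cH(I/S)\widehat{R}$. This suffices: our $\phi$ satisfies it, since $\phi(u)-u=v-u=-h\in T(I/S)_{y}$; and $\phi^{-1}$ satisfies it too, because from \ref{P:elauto} we have $p\phi=p$ with $p:\widehat{R}\to\widehat{R}/(h)$, hence $p\phi^{-1}=p$, so $\phi^{-1}(u)-u\in(h)\widehat{R}\subseteq T(I/S)_{y}\widehat{R}$; applying the statement to both $\phi$ and $\phi^{-1}$ yields the two inclusions, whence $\phi(\cH(I/S)\widehat{R})=\cH(I/S)\widehat{R}$. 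Since $\psi$ is a ring homomorphism, it is then enough to check $\psi(g)\in\cH(I/S)\widehat{R}$ for $g$ in a generating set of the ideal $\cH(I/S)\widehat{R}$, and by Definition \ref{D:ho} we may take $g=a_{0}t_{1}\cdots t_{i}$ with $a_{0}\in\Delta^{i}(I/S)_{y}$, $t_{1},\dots,t_{i}\in T(I/S)_{y}$ and $i\ge 0$; the summands with $i\ge b$ already lie in $T(I/S)^{b}\widehat{R}\subseteq\cH(I/S)\widehat{R}$, so allowing all $i\ge 0$ is harmless and simplifies the bookkeeping — in other words $\cH(I/S)\widehat{R}=\sum_{l\ge0}\Delta^{l}(I/S)T(I/S)^{l}\widehat{R}$.

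Next I would Taylor-expand. Writing $\psi(u)=u-h_{0}$ with $h_{0}\in T(I/S)_{y}\widehat{R}$ (in particular $h_{0}$ in the maximal ideal), and using that $A$ contains $\mathbf{Q}$ (being equicharacteristic with residue field of characteristic zero), the series $\psi(f)=f(u-h_{0},u_{2},\dots,u_{n})=\sum_{k\ge0}\frac{(-h_{0})^{k}}{k!}\,\partial_{u}^{k}f$ converges in $\widehat{R}$ for every $f$. Expanding each factor of $g=a_{0}t_{1}\cdots t_{i}$ and grouping by the multinomial formula presents $\psi(g)$ as a convergent sum of terms $\frac{(-h_{0})^{k}}{c_{0}!\cdots c_{i}!}\,(\partial_{u}^{c_{0}}a_{0})(\partial_{u}^{c_{1}}t_{1})\cdots(\partial_{u}^{c_{i}}t_{i})$, $k=c_{0}+\cdots+c_{i}$. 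By the first paragraph $\partial_{u}^{c_{0}}a_{0}\in\Delta^{i+c_{0}}(I/S)\widehat{R}$ and $\partial_{u}^{c_{l}}t_{l}\in\Delta^{b-1+c_{l}}(I/S)\widehat{R}$, while the hypothesis $u,v\in T(I/S)_{y}$ is exactly what makes $h_{0}^{k}\in T(I/S)^{k}\widehat{R}$ (for $\phi$, $h_{0}=h\in T(I/S)_{y}=\Delta^{b-1}(I/S)_{y}$; for $\phi^{-1}$, $h_{0}\in(h)\widehat{R}\subseteq T(I/S)_{y}\widehat{R}$).

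It then remains to see that each such term lies in $\sum_{l\ge0}\Delta^{l}(I/S)T(I/S)^{l}\widehat{R}=\cH(I/S)\widehat{R}$. The bookkeeping idea is that the factor $h_{0}^{k}$ contributes a ``$T$-power'' equal to the total number $k$ of derivatives, whereas each derivative either raises the $\Delta$-order of the body $a_{0}$ by one without consuming a $T$-factor, or consumes one factor $t_{l}\in T(I/S)$, turning it into an element of $\Delta^{b}(I/S)\subseteq\widehat{R}$, which is simply absorbed; a short count then shows that in every term the ``$\Delta$-order'' is at most the ``$T$-power'', so the term lies in some $\Delta^{l}(I/S)T(I/S)^{l}\widehat{R}$ (after absorbing the surplus factors of $T(I/S)$ into $\widehat{R}$), and this sits inside $\cH(I/S)\widehat{R}$ — directly when $l\le b-1$, and because $\Delta^{l}(I/S)T(I/S)^{l}\widehat{R}\subseteq T(I/S)^{l}\widehat{R}\subseteq T(I/S)^{b}\widehat{R}\subseteq\cH(I/S)\widehat{R}$ when $l\ge b$. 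The main obstacle I expect is exactly this index bookkeeping: the delicate point is that one must retain, rather than discard, the body factor $\Delta^{i+c_{0}}(I/S)$, because a bare power $T(I/S)^{k}$ with $k<b$ need not lie in $\cH(I/S)$ — this is precisely where Wlodarczyk's homogenization does its work, and once the completion structure theorems \ref{P:twocomp} and \ref{V:bn8.0} are available, nothing further is needed to run the argument over $A$.
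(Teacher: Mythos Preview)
Your argument is correct and follows the same Wlodarczyk-style Taylor expansion as the paper. Two organizational differences are worth noting. First, the paper avoids treating $\phi^{-1}$ separately: since $\phi$ is an automorphism of the noetherian ring $\widehat R$, the single inclusion $\phi(\cH(I/S)\widehat R)\subseteq \cH(I/S)\widehat R$ already forces equality (the chain $J\subseteq\phi^{-1}(J)\subseteq\phi^{-2}(J)\subseteq\cdots$ must stabilize). Second, rather than expanding a full generator $a_0t_1\cdots t_i$ at once and doing multinomial bookkeeping, the paper separates the computation into two pieces: $(a_j)$ $\phi(\Delta^j(I/S)\widehat R)\subseteq \sum_{l\ge 0}\Delta^{j+l}(I/S)\,T(I/S)^l\widehat R$, obtained from the one-variable Taylor formula applied to any $f\in\Delta^j(I/S)$, and $(b)$ $\phi(T(I/S)\widehat R)\subseteq T(I/S)\widehat R$, again from the Taylor formula using $h\in T(I/S)$. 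Multiplying, $\phi(\Delta^j(I/S)\,T(I/S)^j)\subseteq\cH(I/S)\widehat R$ follows with no index chasing. Your route gives the same conclusion; the paper's packaging just keeps the combinatorics shorter.
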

\begin{proof}
Above, we wrote $\cH(I/S) {\widehat {R}}:=\cH(I/S)_y {\widehat {R}} $.  Recall the definitions in \ref{P:elauto}: we identify ${\widehat {R}}$ with 
$R^*:=A'[[u,u_2, \ldots, u_n]]$, then 
$\phi:R^* \to R^*$ is defined by: $\phi(u)=v, \, \phi(u_i)=u_i$, for $i=2, \ldots, n$, $\phi(a) =a$ for $a \in A'$. Thus, for $f(u,u_2,\ldots, u_n) \in R^*$, 
$$(1) \qquad \phi \,  f=
f(u)+ \dfrac{\partial f}{\partial u} h + \frac{1}{2!}\dfrac{\partial ^{2}f}{\partial u^2} \,   u^2 h^2 +
\frac{1}{3!}\dfrac{\partial ^{3}f}{\partial u^3}  u^3 \, h^3+ \cdots$$
(With this set-up, what follows imitates the proof of \cite{W}, Lemma 2.9.4). To check the desired equality, since $\phi$ is an automorphism of a noetherian ring, it suffices to show: 
$\phi ({\cH}(I/S) R^*) \subseteq {\cH}(I/S) R^*$. \\
To check the latter inclusion, in view of the definition of 
$\cH (I/S)$ in \ref{D:ho}, it suffices to show: 
$\phi (\Delta ^j (I/S)[T(I/S)]^j R^* \subseteq \cH (I/S) R^*$, $j=0, \ldots, b-1$, where as in \ref{D:ho} we have written $T(I/S) = \Delta ^{b-1} (I/S)$. It is easy to verify that this inclusion follows if we can prove the following assertions: 
$(a_j): \phi({\Delta}^j(I/S) R^*) \subseteq {\cH}({\Delta}^j(I/S) R^*)$, $j=0, \ldots, b-1$, $(b): \phi(T(I/S)R^*) \subseteq {\cH}(I/S) R^*$.
Now, $(a_0)$ is a consequence of formula (1) above. Since this is valid for any ideal $I$, we may substitute (in $(a_0)$) $I$ by $\Delta ^j (I/S)$, and we obtain $(a_j)$, for all $j$. Concerning $(b)$, if $g \in T(I/S)_y$, by (1) (with $f=g$) and the fact that $h = u-v\in 
T(I/S)$ (hence $h^i \in [T(I/S)_y]^i$ for all $i$), it follows that $\phi(g) \in T(I/S)$, as desired.
\end{proof}
 
Let us recall some terminology and notation to be used in the next theorem. Given a scheme 
$W$ and a point $w \in W$, an etale neighborhod of $y$ in $W$ consists of a scheme $V$, a point $v \in V$ and an etale morphism 
$\pi:V \to W$ such that $\pi(v)=w$. We use the notation $\pi:(V,v) \to (W,w)$ to indicate this etale neighborhood. To shrink the etale neighborhood $\pi$ means to take the composition of $\pi \, \pi '$ of $\pi$ with another etale neighborhood  $\pi':(V',v') \to (V,v)$. 
 If $f$ and $g$ are morphisms from a scheme $V$ to a scheme $W$ and  $Y$ is a closed subscheme of $W$, we say that $f$ and $g$ {\it agree over $Y$} if 
$f^{-1}(Y) = g^{-1}(Y):=Y'$ and $fj = gj$, where $j:Y' \to V$ is the inclusion.

If $B=(p:W \to S,I,b,E)$ is an $S$-basic object, $f:V \to W$ an etale morphism, $f^*(B)$ denotes the $S$-basic object $(V,f^*(I),b,f^*(E))$, where $f^*(I):=I{\cO}_V$ and $f^*(E)$ is the sequence of inverse images by $f$ of the hypersurfaces that appear in $E$.

\begin{thm}
\label{T:etal} 
Let $B=(W \to S, I, b, E)$ be a nice basic object, $Z$ and $Z'$ adapted hypersurfaces, 
$y \in Z \cap Z'$. Let 
 $Z$ (resp. $Z'$) be defined by 
$u \in \Gamma(W, \cO _W)$ (resp. $v \in \Gamma(W, \cO _W)$) and $J=\cH(I/S)$. Then, there are etale neighborhoods  
$\pi _u:(V,z) \to (W,y)$ and $\pi _v:(V,z) \to (W,y)$ such that: 
 (a) $ \pi_u^*(J)=\pi_v^*(J)$ ,  
 (b) $\pi_u^*(u)=\pi_v^*(v)$,   (c) if $H$ is any hypersurface in $E$, then 
 $\pi _u ^{-1}(H)=\pi _v ^{-1}(H)$ (hence, $\pi_u^*(E) = \pi_v^*(E)$).  
 (d) $\pi_u=\pi_v$ over ${\mathcal V}(u-v)$
\end{thm}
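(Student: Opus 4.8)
The strategy is to set up, over $S=\Spec(A)$, a relative version of Wlodarczyk's Glueing Lemma (\cite{W}, \S2): the formal automorphism furnished by \ref{P:elauto} is algebraized over a common etale neighborhood, using that $\cO_{W,y}$ is excellent. We may assume $y$ is a closed point of $W$ (the general case follows by a routine localization argument). Put $R=\cO_{W,y}$ and let $n=\dim W$. Since $Z$ and $Z'$ are adapted, condition (A1) gives $u\in\Delta^{b-1}(I/S)_y=T(I/S)_y$ and $v\in T(I/S)_y$ (cf.\ \ref{V:is3.0}), where $T(I/S)=\Delta^{b-1}(I/S)$ as in \ref{D:ho}. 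The residue field $k$ of $A$ has characteristic zero, hence is infinite, so by a linear-algebra argument in $r(R)/r(R)^2$, using (A2) (transversality of $Z$ and of $Z'$ to $E$), we may choose $u_2,\dots,u_n\in R$ such that both $u,u_2,\dots,u_n$ and $v,u_2,\dots,u_n$ are $A$-regular systems of parameters of $R$ and such that every hypersurface $H$ of $E$ through $y$ is defined near $y$ by one of $u_2,\dots,u_n$.

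Apply \ref{P:elauto} to these two systems: there is an $A$-linear automorphism $\phi$ of $\widehat R$ (the completion along the maximal ideal, which by \ref{P:twocomp} coincides with the completion along $(u,u_2,\dots,u_n)$) with $\phi(u)=v$, $\phi(u_i)=u_i$ for $i\ge 2$, and $p\,\phi=p$, where $p\colon\widehat R\to\widehat R/(u-v)$ is the quotient map; equivalently $\phi(x)-x\in(u-v)\widehat R$ for every $x$, so that $\phi$ carries the ideal $(u-v)\widehat R$ into, hence onto, itself. Since $u,v\in T(I/S)_y$, \ref{P:hompl} applies to this $\phi$ and yields $\phi\bigl(\cH(I/S)_y\widehat R\bigr)=\cH(I/S)_y\widehat R$. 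Writing $J=\cH(I/S)$, we have thus produced over the completion a single automorphism that moves $(u,u_i,J)$ exactly as (a)--(c) require and that reduces to the identity modulo $(u-v)$, as (d) requires.

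It remains to descend $\phi$ to an etale neighborhood. Choose an affine neighborhood $W=\Spec P$ of $y$ with $P$ a finitely generated $A$-algebra, fix generators $t_1,\dots,t_N$ of $P$ over $A$, and let $X$ be the scheme of finite type over $\Spec R$ representing, on $R$-algebras $T$, the pairs of morphisms $\sigma,\tau\colon\Spec T\to W$ such that $\sigma^*u=\tau^*v$, $\sigma^*u_i=\tau^*u_i$ for $i\ge 2$, the tuple $(\sigma^*u,\sigma^*u_2,\dots,\sigma^*u_n)$ is a relative (over $S$) regular system of parameters at the closed point, and $\tau^*t_j-\sigma^*t_j\in(\sigma^*u-\sigma^*v)T$ and $\sigma^*t_j-\tau^*t_j\in(\tau^*u-\tau^*v)T$ for all $j$. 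The pair $(\sigma_0,\tau_0)$ with $\sigma_0\colon\Spec\widehat R\to W$ canonical and $\tau_0$ the morphism with $\tau_0^*=\phi^{-1}\circ\sigma_0^*$ is an $\widehat R$-point of $X$: indeed $\phi^{-1}(u)=v$ fails — rather $\tau_0^*v=\phi^{-1}(v)=u=\sigma_0^*u$ and $\tau_0^*u_i=u_i$, while the last two relations follow from $\phi^{-1}(x)-x\in(u-v)\widehat R$ together with $(\phi^{-1}(u-v))\widehat R=(u-v)\widehat R$. As $R$ is excellent — being essentially of finite type over the characteristic-zero field $k\subset A$ — Artin approximation (equivalently, Neron--Popescu desingularization of $R\to\widehat R$) provides a point of $X$ over $\cO_{V,z}$ for a suitable pointed etale neighborhood $(V,z)\to(W,y)$, i.e.\ a pair $\pi_u,\pi_v\colon(V,z)\to(W,y)$ (after shrinking $V$) satisfying the same relations on the nose; the imposed regular-system condition, cancellation for etale morphisms, and the fact that both $(u,u_2,\dots,u_n)$ and $(v,u_2,\dots,u_n)$ define morphisms $W\to\mathbf A^n_S$ etale at $y$ force $\pi_u$ and $\pi_v$ to be etale at $z$. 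Now (b) is the relation $\pi_u^*u=\pi_v^*v$; for (c), each $H\in E$ with $y\in H$ equals $\mathcal V(u_i)$ near $y$ for some $i\ge 2$, so $\pi_u^{-1}(H)=\mathcal V(\pi_u^*u_i)=\mathcal V(\pi_v^*u_i)=\pi_v^{-1}(H)$ near $z$, and the $H$ with $y\notin H$ are removed by shrinking; for (d), the two congruence relations give $(\pi_u^*u-\pi_u^*v)=(\pi_v^*u-\pi_v^*v)$ as ideals and $\pi_u=\pi_v$ on their common zero scheme, which is $\pi_u^{-1}(\mathcal V(u-v))=\pi_v^{-1}(\mathcal V(u-v))$; finally (a), $\pi_u^*J=\pi_v^*J$, may be checked at the stalk at $z$ and, by faithful flatness of the completion, after completing, where by the identification $\widehat{\cO_{V,z}}\cong\widehat R$ it becomes the equality $\phi(\cH(I/S)_y\widehat R)=\cH(I/S)_y\widehat R$ of \ref{P:hompl} (one could instead add ``$\sigma^*J=\tau^*J$'' to the defining relations of $X$, legitimate precisely because $\phi$ satisfies it). Shrinking $V$ so all stalk equalities hold on $V$ completes the construction.

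The main obstacle is the algebraization step of the third paragraph: passing from the formal automorphism $\phi$ to an honest pair of etale morphisms while keeping the relations (b) and (d) exact rather than merely approximate. This is exactly where the extra clause $p\,\phi=p$ of \ref{P:elauto} is essential — it is what guarantees that the system defining $X$ has a solution over $\widehat R$ at all (one reducing to the identity over $\mathcal V(u-v)$), so that Artin approximation applies — and where the excellence of $\cO_{W,y}$, itself a consequence of $A$ being a finite algebra over its characteristic-zero residue field, is what makes that approximation legitimate.
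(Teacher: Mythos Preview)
Your approach is essentially the same as the paper's: build the formal automorphism via \ref{P:elauto} and \ref{P:hompl}, then descend by Artin approximation. The paper carries this out by writing explicit polynomial equations over $R$ (its (1)--(4)) rather than packaging them into a representing scheme $X$, but the content is the same; in particular the paper's equation (4) is exactly your parenthetical suggestion ``add $\sigma^*J=\tau^*J$ to the defining relations of $X$''.

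There is, however, a genuine gap in your primary argument for (a). After Artin approximation, the automorphism of $\widehat R$ induced by $(\pi_u,\pi_v)$ is \emph{not} $\phi$; it is only congruent to $\phi$ modulo a high power of the maximal ideal. So the sentence ``after completing \ldots\ it becomes the equality $\phi(\cH(I/S)_y\widehat R)=\cH(I/S)_y\widehat R$ of \ref{P:hompl}'' is incorrect: the map you are looking at is an approximation $\phi'$ of $\phi$, and \ref{P:hompl} says nothing about $\phi'$. The only way to guarantee $\pi_u^*J=\pi_v^*J$ is to impose it among the equations being approximated, exactly as in your parenthetical remark (and as the paper does with its equation (4), writing each generator of $J$ as a combination of generators with unknown coefficients). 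You should promote that parenthetical to the actual argument and drop the completion claim.

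Two smaller points. First, the clause ``$(\sigma^*u,\sigma^*u_2,\dots,\sigma^*u_n)$ is a relative regular system of parameters at the closed point'' is not a closed condition and is not even well-posed for a general $R$-algebra $T$; it is cleaner to define $X$ by the closed conditions alone and then argue, as the paper does, that any approximation agreeing with $(\sigma_0,\tau_0)$ modulo $\mathcal N^2$ induces an automorphism of $\widehat R$, whence $\pi_u$ (and then $\pi_v$, via your factorization through $\mathbf A^n_S$) is \'etale at $z$. Second, your treatment of (d), encoding $p\,\phi=p$ as congruence relations among the generators $t_j$, is more explicit than the paper's; this is a genuine improvement in clarity, since the paper's equations (1)--(4) do not visibly contain a condition forcing (d).
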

\begin{proof}
Since $W$ is of finite type over $A$, we may find an affine open neighborhood of $y$, still denoted by $W$, of the form $\Spec (D)$, where $D = A[{\bf x}]/K$, with 
${\bf x} = (x_1, \ldots, x_m)$ ($x_1, \ldots, x_m$ algebraically independent over $A$) and  $K$ and ideal of $A[{\bf x}]$, say generated by 
 $f_i \in A[{\bf x}]$, $i=1, \ldots, r$. The point $y$ corresponds to a prime ideal $P$ of $D$, so we may identify $R=\cO _{W,y}=D_P$. Moreover, shrinking $W$, if necessary, we may assume that: 
 (i) $J \, R = (g_1, \ldots, g_t) R$, where $g_j \in A[{\bf x}]$, for all $j$, 
 (ii) there are elements $u_2, \ldots, u_n$ in $R$ such that both $u, u_2, \ldots, u_n$ and $v, u_2, \ldots, u_n $ are $A$-regular systems of parameters of $R$, where each hypersurface $H$ in $E$ containing $y$ is defined at $y$ by some $u_i$, $i > 1$ and, moreover, $u$ and  $u_j,\, j = 2, \ldots, n$ are images  in $R=D_P$ of  polynomials (in $A[{\bf x}]$)
 $q({\bf x})$ and 
 $q_j({\bf x}), \, j = 2, \ldots, n$ respectively.
 
 Now, we have natural homomorphisms 
 $$A[{\bf x}] \to A[{\bf x}]/K \to (A[{\bf x}]/K)_P = R \to {\widehat R}$$
 where ${\widehat R}$ denotes the completion of $R$ with respect to its maximal ideal.
  Let $\psi:A[{\bf x}] \to {\widehat R}$ 
 be the composition of these homomorphisms, and  $a_i$ the image of $x_i \in A[{\bf x}]$ in 
 ${\widehat R}$,
  $i=1, \ldots, m$. The fact that $\psi$ factors through $K$ means:
 $$(1) \qquad f_i(a_1, \ldots, a_m ) = 0, ~ i=1, \dots, r$$
Now consider the automorphism $\phi$ (of ${\widehat R}$) associated to 
 $u,v,u_2, \ldots, u_n$, as in \ref{P:elauto}, and the composition 
 $\phi \, \psi : A[{\bf x}]\to  {\widehat R}$. Since $\phi$ is $A$-linear and the coefficients of $q({\bf x})$ are in $A$, the fact that $\phi(u)=v$ means:
$$(2) \qquad q(a_1, \ldots, a_m) = v \in R$$
For the same reasons, the fact that $\phi(u_i)=u_i,~ i = 2, \ldots, n$, means
$$(3) \qquad q_i(a_1, \ldots, a_m) = u_i \in R, ~ i = 2, \ldots, n$$
Let  $J \, R$ be generated by elements 
 $h_1, \ldots, h_p$ in $R$. Then, as above, from $\phi(JR)=JR$ we obtain:
$$(4) \qquad g_j = c_{j1}\, h_1 + \cdots, c_{jp}\, h_p, ~ j=1, \ldots, t $$
for suitable elements $c_{ji} \in {\widehat R}$. 
 
 We may view (1) through (4) as polynomial equations with coefficients in $R$ in unknowns 
 $\{A_k\}$ and $\{C_{ji}\}$, which have a solution  $\{a_k\}$ and $\{c_{ji}\}$ in  
${\widehat R}$. By Artin's approximation Lemma (see \cite{A} 1.10, note that since $A$ is a finite dimensional vector space over $k=A/r(A)$, the hypothesis are valid) for any positive integer 
$s$ we find solutions $\{b_k\}, ~ \{d_{ji}\}$ in $\widetilde R$, the henselization of $R$ with respect to its maximal ideal. Choose $s \geq 2$. By sending 
$X_i$ to $b_i \in \widetilde R  \subset \widehat R$, 
$i=1, \ldots, m$, we  obtain an $A$-linear homomorphism 
$\alpha ':A[{\bf x}] \to \widetilde R  \subset \widehat R$, which induces a homomorphism 
$\alpha : A[{\bf x}]/K \to \widetilde R $ and, localizing at $P$ and completing, another 
$\phi ': \widehat R \to \widehat R$. By our choice $s \geq 2$, $\phi '$ agrees with 
the isomorphism $\phi$ modulo 
${\mathcal N}^{2}$ (${\mathcal N}= r({\widehat R}$). Hence, $\phi '$ is also an automorphism of ${\widehat R}$, satisfying $\phi ' (u) = v$ and $\phi '(u_i) = u_i$, $i=1, \ldots, n$.

Returning to $\alpha: D \to {\widetilde R}$, from the definition of henselization (involving a limit of etale neighborhoods) for some etale neighborhood 
$\pi_v:(V,z) \to (W,y)$ there is an induced morphism 
$\bar{\alpha}: V \to W $ such that $\bar{\alpha}(z)=y$. If we consider the induced homomorphism 
$\widehat{\cO _{W,y}} \to \widehat{\cO _{V,z}}$, by our construction this may be identified to the isomorphism 
$\phi ': \widehat{R} \to \widehat{R}$. Thus, $\bar{\alpha}$ is etale at $z$. Hence, shrinking $V$ if necessary, $\bar{\alpha}$ defines an etale neighborhood 
$\pi_u:(V,z) \to (W,y)$. Shrinking again this neighborhood if necessary, by using \ref{P:elauto} we see $\pi_u$ and $\pi_v$ satisfy properties (a) through (d). 
\end{proof}
 \begin{voi}
\label{V:hi1.0} 
Theorem \ref{T:etal} can be extended to sequences of permissible transformations of $A$-basic objects as follows. Suppose we are in the conditions of \ref{T:etal}. We retain the notation of that theorem, but we also write  
$B_0:= (W \to S, J,b,E)$ (recall $J= \cH (I/S)$). Take a $B_0$-permissible center $C$. Since  $C \subseteq {\mathcal V}(u-v)$, by \ref{T:etal} (d) 
$\pi _u ^{-1}(C)=\pi _u ^{-1}(C) := {\overline C} \subseteq V$. Let us write 
${\overline J}:=\pi _u ^* (J) = \pi _v ^* (J)$ (the equality by \ref{T:etal}, (a)), and 
${\overline B}_0= (V, {\overline J}, b, {\overline E})$, where ${\overline E}:=\pi_u^*(E)=\pi_v^*(E)$. 
 Since $\pi _u$ and  $\pi _v$ are etale, we have that both 
 $\pi  _u ^* ( \Delta ^{b-1}(J/S))$ and $  \pi  _u^* ( \Delta ^{b-1}(J/S)) $ agree with 
 $\Delta ^{b-1}({\overline J}/S)$, i.e, 
  $\pi  _u^* ( \Delta ^{b-1}(J/S))
=  \pi  _u^* ( \Delta ^{b-1}(J/S))$
 $=\Delta ^{b-1}({\overline J}/S)$.
 Now  we may easily check that ${\overline C}$ is a 
$\overline B$-permissible center. We transform $B$ and $\overline{B}$ with centers $C$  and $\overline{C}$ respectively, getting 
$B \leftarrow B_1 = (W_1 \to S, J_1, b, E_1)$ and 
$ {\overline B_0} \leftarrow {\overline B_1} = ({\overline W}_1, {\overline J}_1, b, {\overline E}_1) $ respectively. By  Proposition \ref{P:bloui}, both the pull-backs of 
$\pi _u : V \to W$ and  $\pi _v : V \to W$ via $W \leftarrow W_1$ (the blowing-up of $W$ with center $C$) may be identified to the blowing-up $V_1$ of $V$ with center 
$\overline{C}$. Hence we get etale morphisms 
$\pi_{1u}, ~ \pi_{1v}$ from $V_1$ to $W_1$ induced by $\pi_{u}$ and $\pi_{v}$ respectively. Notice that again, by the etaleness, both
$\pi_{1u}^{*}(J_1)$ and  $\pi_{1v}^{*}(J_1)$ may be identified to 
${\overline J}_1$, hence they are equal. Similarly, if $u_1$ (resp. $v_1$) define the strict transform of $Z$ (resp. $Z'$) to $W_1$, then 
$\pi_{1u}^{*}(u_1) = \pi_{1v}^{*}(v_1) = \overline{u}_{1}$, where 
$\overline{u}_{1}$ is the proper transform of $\overline{u}$ to $V_1$. Note that (by  \ref{V:is5.0}) these hypersurfaces are again adapted.

A formal argument with cartesian (i.e., fiber product) squares shows that 
$\pi_{1u}=\pi_{1v}$ over ${\mathcal V}(u_1 - v_1)$ (in the sense explained after the statement of \ref{T:etal}) and moreover, if we choose a point $y_1 \in W_1$ lying over $y_0:=y$, then there is a point $ z_1 \in V_1$ lying over $z_0$ such that 
$\pi _{u1}(z_1)=\pi _{v1}(z_1)=y_1$.  

Now the procedure may be iterated: if $C_1$ is a permissible $B_1$-center, then 
$\pi_{1u}^{-1}(C_1) = \pi_{1v}^{-1}(C_1) : = \overline{C_1}$ and this is a permissible center for $\overline{B}_1$, we take the transforms of $B_1$ and $\overline{B}_1$ with centers $C_1$ and $\overline{C_1}$ respectively, and proceed as before. Iterating, we obtain the following result:
 \end{voi}
\begin{thm}
\label{T:etaler}
Let 
$$(1) \qquad B_0 \leftarrow B_1 \leftarrow \cdots \leftarrow B_r$$
be a permissible sequence of $A$-basic objects and transformations, where $B_0$ is the homogenized basic object associated to a nice basic object $ B=(W \to S,I,b,E)$, we write 
$B_i=(W_i \to S, J_i,b,E_i)$ (so, $J_0 = \cH (I/S), \, W_0=W$), $p_i:W_i \to W_{i-1}$ the corresponding blowing-up morphism, $i=1, \ldots, r$. Let $Z$ and $Z'$ be $B$-adapted hypersurfaces, defined by elements $u$, $v$ of $\Gamma (W, \cO _{W})$ respectively, $y_0 \in Z \cap Z'$ a point of $W$. Then there are a permissible sequence of $A$-basic objects and transformations 
$\overline{B_0} \leftarrow \overline{B_1} \leftarrow \cdots \leftarrow \overline{B_r} $
(we write $\overline{B_i}=(V_i \to S, \overline{J_i},b,\overline{E_i})$, $q_i$ denotes the $i$-th blowing-up morphism $V_i \to V_{i-1}$), and for each $i$ etale morphisms 
$\pi _{iu}$ and $\pi _{iv}$ from $V_i$ to $W_i$ such that:
 
 (i) $\pi _{iu}^{*}(B_i) = \overline{B_i} = \pi _{iv}^{*}(B_i)$, in particular 
 $\pi _{iu}^{*}(J_i) = \pi _{iv}^{*}(J_i)$
 
 (ii) $\pi _{iu}^{*}(u_i) = \pi _{iv}^{*}(v_i)$, where $u_i$ and $v_i$ denote the strict transforms of $u$ and $v$ to $W_i$, respectively.
 
 (iii) $\pi _{iu}=\pi _{iv}$ over ${\mathcal V}(u_i-v_i)$
 
 (iv) for all $i$, $\pi _{iu}^{-1}(C_i)=\pi _{iv}^{-1}(C_i)$ and this is the center of the $i$-th transformation $\overline{B_i} \leftarrow    \overline{B_{i+1}}$
 
 (v) for all $i$, the squares 
 
\begin{displaymath}
\begin{array}{ccccccccc}
{V_{i-1}}&{{\stackrel{q_i} \leftarrow}}&{V_i}&{}&{}&{V_{i-1}}&{{\stackrel{q_i}      \leftarrow}}&{V_i} \\

{\enskip{\downarrow \!{\scriptstyle{\pi _{u,i-1}}}}}&{}&{\enskip
{\downarrow \! {\scriptstyle{\pi _{ui}}}}}&{}&{}&     
{\enskip{\downarrow \!{\scriptstyle{\pi _{v,i-1}}}}}&&{\enskip{\downarrow \! 
{\scriptstyle {\pi _{vi}}}}}\\


{W_{i-1}}&{{\stackrel{p_i} \leftarrow}}&{W_i}&{}&{}&{W_{i-1}}&{{\stackrel{p_i}      \leftarrow}}&{W_i}

\end{array}
\end{displaymath}
are cartesian.

(vi) If $y_i \in W_i$ is such that $p_i(y_i)=y_{i-1}$, $i=1, \ldots, r$, then there are points $z_i \in V_i$, $i=1, \ldots, r$, such that 
$\pi _{ui}(z_i)=\pi _{vi}(z_i)=y_i$ and $q_i(z_i)=z_{i-1}$, C. Moreover, if 
$z_0, \ldots, z_{r-1}$ as above are given, 
then we can find $z_r \in V_r$, lying over $z_{r-1}$ such that  
$\pi _{ur}(z_r)=\pi _{vr}(z_r)=y_r$ and $q_r(z_r)=z_{r-1}$
\end{thm}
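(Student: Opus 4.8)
The plan is to prove this by induction on $r$, iterating Theorem~\ref{T:etal}; indeed the single inductive step $i\rightsquigarrow i+1$ is precisely the construction already carried out in \ref{V:hi1.0}, and all that is left is to package it as a formal induction. The base case $r=0$ is Theorem~\ref{T:etal} itself: it furnishes one scheme $V_0$ with two \'etale maps $\pi_{0u},\pi_{0v}\colon V_0\to W_0=W$, and its properties (a)--(d) are exactly (i)--(iii) at level $0$, together with the point $z_0$ over $y_0$ required by (vi); conditions (iv) and (v) are vacuous at this stage. So I would assume that $V_0,\dots,V_i$, maps $\pi_{ju},\pi_{jv}$ and points $z_0,\dots,z_i$ have been constructed satisfying (i)--(vi) through level $i$, and that a $B_i$-permissible center $C_i$ is given.

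The first step of the inductive passage is to check $C_i\subseteq\mathcal V(u_i-v_i)$. Since the strict transform of a $B$-adapted hypersurface is again adapted (\ref{V:is5.0}), every $B_j$ is nice, hence good (\ref{V:is4.1}); by Remark~\ref{R:cegood} a $B_i$-permissible center then satisfies $\Delta^{b-1}(J_i/S)\subseteq I(C_i)$, while condition (A1) for the adapted hypersurfaces $Z_i,Z_i'$ (cut out by the strict transforms $u_i,v_i$) gives $I(Z_i),I(Z_i')\subseteq\Delta^{b-1}(J_i/S)$. By \ref{P:ekiv} (equivalently \ref{P:ceng}(a)) this forces $C_i\subseteq Z_i\cap Z_i'\subseteq\mathcal V(u_i-v_i)$, so by (iii) at level $i$ I get $\overline{C_i}:=\pi_{iu}^{-1}(C_i)=\pi_{iv}^{-1}(C_i)$, which, by the \'etale functoriality of the notion of permissible center ($\Delta^j$, normal crossings and goodness all descend under \'etale base change), is a permissible center for $\overline{B_i}$.

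Next I would form $B_{i+1}=\cT(B_i,C_i)$ and $\overline{B_{i+1}}=\cT(\overline{B_i},\overline{C_i})$. Because $\pi_{iu}^{-1}(C_i)=\pi_{iv}^{-1}(C_i)$, Proposition~\ref{P:bloui} identifies the blowing-up $V_{i+1}$ of $V_i$ at $\overline{C_i}$ with both fibre products $V_i\times_{W_i}W_{i+1}$ taken along $\pi_{iu}$ and along $\pi_{iv}$, which produces the induced \'etale morphisms $\pi_{i+1,u},\pi_{i+1,v}\colon V_{i+1}\to W_{i+1}$ and the cartesian squares (v). Etaleness then yields (i), since controlled transforms, strict transforms of the hypersurfaces in $E_i$, and the exceptional divisor all commute with the base changes $q_{i+1}$, so $\pi_{i+1,u}^*(J_{i+1})$ and $\pi_{i+1,v}^*(J_{i+1})$ both coincide with $\overline{J_{i+1}}$, and similarly $\pi_{i+1,u}^*(E_{i+1})=\overline{E_{i+1}}=\pi_{i+1,v}^*(E_{i+1})$; it yields (ii), because $\pi_{iu}^*(u_i)=\pi_{iv}^*(v_i)$ has proper transform to $V_{i+1}$ equal to both $\pi_{i+1,u}^*(u_{i+1})$ and $\pi_{i+1,v}^*(v_{i+1})$. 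Property (iii) comes from a diagram chase with the squares (v): the two maps agree over $\mathcal V(u_{i+1}-v_{i+1})$ because that locus lies, via $q_{i+1}$, over $\mathcal V(u_i-v_i)$, where by induction $\pi_{iu}$ and $\pi_{iv}$ already agree. Then (iv) is (iii) at level $i+1$ combined with $C_{i+1}\subseteq\mathcal V(u_{i+1}-v_{i+1})$ (same argument as above), and (vi) follows from the fibre-product description of $V_{i+1}$: since $\pi_{iu}(z_i)=y_i=p_{i+1}(y_{i+1})$, the pair $(z_i,y_{i+1})$ determines a point $z_{i+1}\in V_{i+1}$ with $q_{i+1}(z_{i+1})=z_i$ and $\pi_{i+1,u}(z_{i+1})=\pi_{i+1,v}(z_{i+1})=y_{i+1}$, the last equality because $\pi_{iu}(z_i)=\pi_{iv}(z_i)$.

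The step I expect to be the real obstacle is property (iii): ensuring that after every blowing-up the two \'etale maps still agree over $\mathcal V(u_{i+1}-v_{i+1})$. This hinges on two facts that must be pinned down carefully once: that the successive centers $C_j$ always lie inside $\mathcal V(u_j-v_j)$ (established above using goodness and \ref{P:ekiv}), and that $u_{i+1}-v_{i+1}$ is correctly compatible with the difference of the strict transforms, so that the agreement locus is transported through the cartesian squares of (v) as claimed. Everything else is routine tracking of \'etale base change, already implicit in \ref{V:hi1.0}.
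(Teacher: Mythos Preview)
Your proposal is correct and follows essentially the same route as the paper: the paper's own argument is precisely the iterative construction in \ref{V:hi1.0}, culminating in the sentence ``Iterating, we obtain the following result'', and you have simply organized that iteration as a formal induction on $r$, supplying a bit more detail (e.g.\ the explicit justification of $C_i\subseteq\mathcal V(u_i-v_i)$ via \ref{V:is5.0}, \ref{V:is4.1}, \ref{R:cegood}, and (A1)). One small slip: in your discussion of (iii), ``that locus lies, via $q_{i+1}$, over $\mathcal V(u_i-v_i)$'' should read ``via $p_{i+1}$'', since $\mathcal V(u_{i+1}-v_{i+1})$ is a subscheme of $W_{i+1}$; the relation $p_{i+1}^*(u_i-v_i)=\mathcal E\cdot(u_{i+1}-v_{i+1})$ (both strict transforms factor out one copy of the exceptional equation because $C_i\subseteq Z_i\cap Z_i'$) gives the needed inclusion, and then the diagram chase you describe goes through.
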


\begin{voi}
\label{V:hi2.0}
Next we discuss  a version of \ref{T:etaler} relative to inductive objects. In it, we use the 
following notation and assumptions. Let 
$B_0=(W_0 \to S, J_0,b,E_0)$ be as in \ref{T:etaler}, i.e., the homogenized $A$-basic object associated to a nice basic object $ B={(W_0 \to S,I,b,E)}$, 
$Z$ and $Z'$ be $B$-inductive hypersurfaces, defined by elements $u$, $v$ of $\Gamma (W_{0}, \cO _{W_{0}})$ respectively. Let $C_0\subset Z \cap Z'$ be a strongly permissible center for both ${B^*}_0:=(B_0)_Z$ and ${B'^*}_0:=(B_0)_{Z'}$. Hence, writing $B_0:=B$,  $C_0$ is also a $B_0$-permissible center. Consider $B^*_1={\cT}(B^*_0,C_0)$, $B'^*_1={\cT}(B'^*_0,C_0)$  and  $B_1={\cT}(B_0,C_0)$. By \ref{V:is5.0} we may identify 
$us(B^*_1)$ and  $us(B'^*_1)$ with the strict transforms $Z_1$ and $Z'_1$ of $Z_0:=Z$ and $Z'_0:=Z'$ to $W_1=us(B_1)$ (via the blowing-up $W_1 \to W$ with center $C_0$) respectively; again $Z_1$ and $Z'_1$ will be $B_1$-inductive hypersurfaces. Let $C_1 \subset Z_1 \cap Z'_1$ be a permissible center simultaneously for $B^*_1$, $B'^*_1$ and $B_1$. Transform these objects with center $C_1$, obtaining $A$-basic objects 
$B^*_2$, $B'^*_2$ and $B_2$ respectively. Again by \ref{V:is5.0} we may identify $us(B^*_2)=Z_2$, $us(B'^*_2)=Z'_2$, where $Z_2$ and $Z'_2$ are the strict transforms of $Z_1$ and $Z'_1$ to $W_2=us(B_2)$, and so on. We assume this can be repeated, eventually obtaining strongly permissible sequences of $A$-basic objects (\ref{V:is10.0}):
$$(1) \qquad (B_0)_{Z} = {B^*}_0 \leftarrow \cdots \leftarrow {B^*}_r$$  and
$$(2) \qquad (B_0)_{Z} = {B'^*}_0 \leftarrow \cdots \leftarrow {B'^*}_r $$       
where in each case we have used (in the sense just explained) 
the same centers 
$C_i \subset Z_i \cap Z'_i$, where  $Z_i = us({B^*}_i)$,  $Z'_i = us({B'^*}_i)$, $i=0,\ldots, r-1$, as well as one:
 $$(3) \qquad {B}_0 \leftarrow \cdots \leftarrow {B}_r$$
where the center of $B_i \leftarrow B_{i+1}$ is again $C_i$. Since, for all $i$, $Z_i$ and $Z'_i$ are $B_i$-inductive hypersurfaces in $W_i$, 
it makes sense to take a point $y \in Z_i \cap Z'_i$.  We still denote by $p_i$ the morphisms $Z_i \to Z_{i-1}$ and $Z'_i \to Z'_{i-1}$ induced by $p_i:W_i \to W_{i-1}$ of \ref{T:etaler}. Let 
${\pi}_i :Y_i \to Z_i$ and ${ \pi '}_i:Y_i \to Z'_i$ be  the morphisms induced by $\pi _{u,i}$  and $\pi _{v,i}$ respectively,  $i=0, \ldots, r$. Then we have:
\end{voi}
\begin{thm}
\label{T:etalez} 
The hypotheses and notation are  those of   \ref{T:etaler} and  \ref{V:hi2.0}, so we have the sequences (1), (2), (3) there introduced. 
 Then, there is a permissible sequence of $A$-basic objects and transformations ${\tilde B}_0 \leftarrow \cdots \leftarrow {\tilde B}_r$ (we write 
${\tilde B}_i=(Y_i \to S, {\tilde J}_i, b!, {\tilde E}_i)$, ${q^*}_i:Y_i \to Y_{i-1}$ the induced morphism, for all $i$), and for each $i$ etale morphisms  
${\pi}_i:Y_i \to Z_i$, ${\pi}'_i:Y_i \to Z'_i$, such that 
${{\pi}_i}^*({B^*}_i)= {\tilde B}_i  ={{\pi}'_i}^*({B'^*}_i)$ and the squares 

\begin{displaymath}
\begin{array}{ccccccccc}
{Y_{i-1}}&{{\stackrel{{q^*}_i} \leftarrow}}&{Y_i}&{}&{}&{Y_{i-1}}&{{\stackrel{{q^*}_i}      \leftarrow}}&{Y_i} \\

{\enskip{\downarrow \!{\scriptstyle{{\pi} _{i-1}}}}}&{}&{\enskip
{\downarrow \! {\scriptstyle{{\pi} _{i}}}}}&{}&{}&     
{\enskip{\downarrow \!{\scriptstyle{{\pi}' _{i-1}}}}}&&{\enskip{\downarrow \! 
{\scriptstyle {{ \pi}' _{i}}}}}\\


{Z_{i-1}}&{{\stackrel{p_i} \leftarrow}}&{Z_i}&{}&{}&{Z'_{i-1}}&{{\stackrel{p_i}      \leftarrow}}&{Z'_i}

\end{array}
\end{displaymath}

{\noindent are cartesian.} \\
Moreover, if $y_i \in Z_i \cap Z'_i$ is such that $p_i(y_i)=y_{i-1}$, $i=1, \ldots, r$ and 
$z_i \in Y_i$, $i=1, \ldots, r-1$ is such that 
${\bar \pi}_{i}(z_i)={\bar \pi}'_{i}(z_i)=y_i$ 
and ${q^*}_i(z_i)=z_{i-1}$,  
 then there is a point $z_r \in V_r$, such that 
 $q^*_r(z_r)=z_{r-1}$ and 
${{ \pi}_r} (z_r)={{{ \pi}'}_r}(z_{r})=y_r$.
\end{thm}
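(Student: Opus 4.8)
The plan is to deduce Theorem~\ref{T:etalez} from Theorem~\ref{T:etaler} by restricting the entire etale picture furnished there to the inductive hypersurfaces. Since a $B$-inductive hypersurface is in particular $B$-adapted, Theorem~\ref{T:etaler} applies to the sequence (3) of \ref{V:hi2.0}, with the elements $u$, $v$ and the point $y_0$, and produces a permissible sequence $\overline{B_0}\leftarrow\cdots\leftarrow\overline{B_r}$ on schemes $V_i$ together with etale morphisms $\pi_{ui},\pi_{vi}\colon V_i\to W_i$ having the properties (i)--(vi) listed there. I would then set $Y_i:=\pi_{ui}^{-1}(Z_i)$. By \ref{T:etaler}(ii) this closed subscheme is cut out by $\pi_{ui}^{*}(u_i)=\pi_{vi}^{*}(v_i)$, hence it equals $\pi_{vi}^{-1}(Z'_i)$; thus $Y_i=V_i\times_{W_i}Z_i=V_i\times_{W_i}Z'_i$ is the strict transform of $Y_0$ under the blowing-ups $q_i$, and the restrictions $\pi_i:=\pi_{ui}|_{Y_i}\colon Y_i\to Z_i$ and ${\pi}'_i:=\pi_{vi}|_{Y_i}\colon Y_i\to Z'_i$ are etale, being base changes of $\pi_{ui}$ and $\pi_{vi}$. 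These are the morphisms $\pi_i,{\pi}'_i$ of the statement.

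Next I would put ${\tilde B}_i:=\pi_i^{*}({B^*}_i)$ --- a basic object on $Y_i$ of index $b!$ --- and prove, by induction on $i$, that ${\tilde B}_i={{\pi}'_i}^{*}({B'^*}_i)$. For $i=0$ the ideals of ${B^*}_0=(B_0)_Z$ and ${B'^*}_0=(B_0)_{Z'}$ are the restrictions, to $Z$ and to $Z'$, of one and the same $W_0$-ideal, the coefficient ideal $\cC(J_0/S)$, which depends only on $J_0=\cH(I/S)$ and not on the chosen adapted hypersurface. Since the operators $\Delta^i(\,\cdot\,/S)$, hence $\cC(\,\cdot\,/S)$, commute with etale base change, and $\pi_{u0}^{*}(J_0)=\pi_{v0}^{*}(J_0)=\overline{J_0}$ by \ref{T:etaler}(i), we get $\pi_{u0}^{*}\cC(J_0/S)=\cC(\overline{J_0}/S)=\pi_{v0}^{*}\cC(J_0/S)$; restricting to $Y_0$ gives $\pi_0^{*}({B^*}_0)=(\overline{B_0})_{Y_0}={{\pi}'_0}^{*}({B'^*}_0)$. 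The equality of the divisor systems ${\tilde E}_0$ on the two sides is seen the same way, using that $\pi_{u0}^{*}(E_0)=\pi_{v0}^{*}(E_0)$ (part of \ref{T:etaler}(i)), so that $\pi_{u0}^{-1}(H)=\pi_{v0}^{-1}(H)$ for each $H\in E$.

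For the induction step, the centers $C_i\subset Z_i\cap Z'_i$ used in the sequences (1) and (2) satisfy $\pi_{ui}^{-1}(C_i)=\pi_{vi}^{-1}(C_i)=:\overline{C_i}\subseteq Y_i$ by \ref{T:etaler}(iv), and $\overline{C_i}$ is a permissible center for ${\tilde B}_i$ since $B$-permissibility (normal crossings and the orders $\nu(\,\cdot\,,C')=\nu(\,\cdot\,^{(0)},C'^{(0)})$) is stable under etale pull-back. By Proposition~\ref{P:bloui} the blowing-up of $Y_{i-1}$ with center $\overline{C_{i-1}}$ is both the pull-back along $\pi_{i-1}$ of the blowing-up of $Z_{i-1}$ with center $C_{i-1}$ and the pull-back along ${\pi}'_{i-1}$ of the blowing-up of $Z'_{i-1}$ with center $C_{i-1}$ --- these being simply the restrictions to the inductive hypersurfaces of the cartesian squares \ref{T:etaler}(v). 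Since the controlled transform of a never-zero ideal commutes with etale pull-back, transforming the relation ${\tilde B}_{i-1}=\pi_{i-1}^{*}({B^*}_{i-1})={{\pi}'_{i-1}}^{*}({B'^*}_{i-1})$ with center $\overline{C_{i-1}}$ yields $\cT({\tilde B}_{i-1},\overline{C_{i-1}})=\pi_i^{*}({B^*}_i)={{\pi}'_i}^{*}({B'^*}_i)$; taking ${\tilde B}_i$ to be this common object produces the asserted permissible sequence ${\tilde B}_0\leftarrow\cdots\leftarrow{\tilde B}_r$ and makes the two families of squares in the statement cartesian (by base change of \ref{T:etaler}(v) to $Z_i$, resp. to $Z'_i$). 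The final point-lifting assertion follows at once from \ref{T:etaler}(vi): the points $z_i$ constructed there lie over $y_i\in Z_i$, hence they belong to $Y_i=\pi_{ui}^{-1}(Z_i)$ and serve as the required lifts.

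The step I expect to be the main obstacle is not any single computation but the coherence of the whole construction: one must check that, through all $r$ stages, the strict-transform identifications $us({B^*}_i)\cong Z_i$ and $us({B'^*}_i)\cong Z'_i$ of \ref{V:is5.0} are compatible with the cartesian squares of \ref{T:etaler} --- that is, that restricting the entire etale picture of Theorem~\ref{T:etaler} to the inductive hypersurfaces is self-consistent --- together with the (standard but, in the relative setting $S=\Spec A$, slightly delicate) fact that forming coefficient ideals and controlled transforms genuinely commutes with etale pull-back. Once these compatibilities are secured, everything else is a formal diagram chase, exactly as in \ref{V:hi1.0}.
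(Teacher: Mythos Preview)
Your proposal is correct and follows essentially the same approach as the paper: restrict the etale picture of Theorem~\ref{T:etaler} to the inductive hypersurfaces by setting $Y_i=\pi_{ui}^{-1}(Z_i)=\pi_{vi}^{-1}(Z'_i)$, identify ${\tilde B}_0$ with the inductive object $(\overline{B_0})_{Y_0}$, and then propagate through the sequence using that blowing-ups and controlled transforms commute with etale base change. The paper's proof is terser (it just writes ${{\pi}_0}^*({B^*}_0)={{\pi}'_0}^*({B'^*}_0)=(\overline{B_0})_{Y_0}$ and ``continues in this way''), while you spell out the commutation of the coefficient-ideal construction with etale pull-back and invoke Proposition~\ref{P:bloui} explicitly; but the underlying argument is the same.
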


\begin{proof} Let $Y_0:={\pi_{u,0}}^{-1}(Z_0)={\pi_{v,0}}^{-1}(Z'_0)$. Then, the induced morphisms 
$\pi_{0}:Y_0 \to Z_0$ and 
$\pi'_{0}:Y_0 \to Z'_0$ 
are etale. From this fact, 
${{\pi}_0}^*({B^*}_0)=  {{\pi}'_0}^*({B'^*}_0)= {\overline{B_0}}_{Y_0} $ 
and $ \overline{C_0}$ is a strongly permissible center for the inductive object 
${\overline{B_0}}_{Y_0}$. Let 
$({\overline{B_0}}_{Y_0})_1:={\cT}({\overline{B_0}}_{Y_0},\overline{C_0} )$ (\ref{V:is6.0} (b)). Then, using the notation of \ref{V:hi1.0}, 
$Y_1:=us( ({\overline{B_0}}_{Y_0})_1)$  
may be identified to the hypersurface of $V_1$ defined by ${\bar u}$ and $\overline{C_1}$ to a closed subscheme of $Y_1$. The induced morphisms 
$\pi _1:{Y_1 \to Z_1}$ and  
$\pi '_1:Y_1 \to Z'_1$ are again etale, which implies that 
${{\pi}_1}^*({B^*}_1)   ={{\pi}'_1}^*({B'^*}_1)= ({\overline{B_0}}_{Y_0})_1 $  and 
$\overline{C_1}$ is  a permissible center for both
$({\overline{B_0}}_{Y_0})_1$ and $\overline{B_1}$. Continuing in this way, we get a permissible sequence of $A$-basic objects 
$${\overline{B_0}}_{Y_0} \leftarrow ({\overline{B_0}}_{Y_0})_1 \leftarrow \cdots \leftarrow ({\overline{B_0}}_{Y_0})_r$$ 
obtained by using the centers $\overline{C_i}$, $i=0, \ldots, r-1$ of \ref{T:etaler}. Let  $Y_i := us(({\overline{B_0}}_{Y_0})_i)$,  
$i=0, \ldots r$. Then
${ \pi}_i :Y_i \to Z_1$ and 
 ${ \pi '}_i:Y_1 \to Z'_i$  (the morphisms induced by $\pi _{u,i}$  and $\pi _{v,i}$),  respectively)  are etale,  and 
 ${{\pi}_i}^*({B^*}_i)= {(\overline{B_i})}_{Y_i}   ={{\pi}'_i}^*({B'^*}_i)$, for all $i=1, \ldots, r$. 
The other assertions are consequences of \ref{T:etaler}
\end{proof}
 
\section{The associated locally nice object}
\label{S:AN} 

In this section we discuss how to associate to a basic object $B$ over an artinian local ring $A \in {\cA}$, locally,   a nice object $B''$, extending the results of \ref{V:ag8} ($\gamma$). The presentation  follows that of 
\cite{EV}, where one works over a base field. In particular, the construction of $B''$ is done via an intermediate object $B'$, which has some but not all the desired properties. Since  the discussion in \cite{EV} is rather sketchy, and some changes are necessary in the present context, we provide some details.

Throughout this section, $A \in {\mathcal A}$ is an artinian ring as in \ref{V:bn1.0}, $S=\Spec (A)$. Given an $A$-basic object $B=(W \to S, I,b,E)$, to say ``$C$ is a center of $B$'' means: $C$ is a center for the underlying pair $(W\to S, E)$, see \ref{V:bn5.0}. (To be a $B$-center is stronger, see \ref{V:bo1.0}). 

We shall use the notation and results of \ref{V:ag6} and \ref{V:dt2.0} about the functions  $\omega_{r}$ and $t_r$.
\begin{Def} 
\label{D:B'} Consider a  $\o$-permissible $A$-sequence ${ B_0 \leftarrow \cdots \leftarrow B_r}$, let ${{\rm max}(\o_{r})=b_r/b}$. We associate to $B_r=(W_r \to S,I_r,b,E_r)$ the following basic object 
$B'_r$. There are two cases: 
\\
($\alpha$) $b_r \geq b$ . We define 
$B_r'=(W_r \to S, \bbar{I_r}, b_r, E_r)$ (see \ref{V:bo2.0} for $\bbar{I_r}$),
\\
($\beta$) $b_r < b$. In the notation of Proposition \ref{P:nuom}, let    
$E_r=(H_1,\ldots, H_{m+r}$) (where $H_{1}, \ldots, H_{m}$ are the strict transforms of the hypersurfaces that appear in $E_0$), 
$\scr{C}_{r}=I(H_{m+1})^{a_1}\ldots I(H_{m+r})^{a_r}$; we set 
$B_r'=(W_r \to S, {\bbar{I_r}}^{b-b_r}+\scr{C}\,^{b_r}, b_r(b-b_r), E_r)$. 
\end{Def} 
 \begin{pro}
\label{P:proB'} With notation and assumptions as in \ref{D:B'}, the $A$-basic object $B'r=(W_r \to S, J_r, b', E_r)$ has the following properties:
\\
(a) $(B_r')^{(0)} = (B_r^{(0)})'$ 
\\
(b) A center $C \subset W_r$ is $\o$-permissible (\ref{V:dt2.0}) if and only if it is $B_r'$-permissible.
\\
(c) For every $B_r'$-center $C$, $\nu(J,C)=\nu(J^{(0)},C^{(0)})=b'$, where $b'$ is the index of $B_0'$.
\\
(d) Let $C \subset W_r$ be a center which is $\o$-permissible (or $B_r'$-permissible, by (b) it is the same.) Consider the transformations $B_r' \leftarrow (B_r')_1$ and 
$B_r \leftarrow B_{r+1}$ with center $C$, and the associated object $B_{r+1}'$. Assume 
$\mathrm{max}(\o_r) = \mathrm{max}(\o_{r+1})$. Then, 
$(B_r')_1=B_{r+1}'$
\end{pro}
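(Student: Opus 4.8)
The plan is to deduce all four parts from the corresponding facts over the residue field $k=A/r(A)$, established in \cite{EV} (or \cite{BEV}), using that every ideal operation entering \ref{D:B'} commutes with passage to the closed fibre. For (a) I would just check that each ingredient of $B_r'$ restricts correctly: $\bbar{I_r}^{(0)}=\bbar{I_r^{(0)}}$ by \ref{V:bo3.0}, each $H_i$ induces $H_i^{(0)}$ by \ref{V:bn9.1}, and the exponents $a_j$ in $\scr{C}_r$ are the integers $\nu(I_{j-1},D_{j-1})-b$ from the proof of \ref{P:nuom}, which equal their fibre analogues because the sequence is $\o$-permissible, hence each $C_j$ is a $B_j$-permissible center; since $b_r$ is by definition computed on the fibre (\ref{V:dt2.0}) and sums, products and powers of ideals commute with restriction, $(B_r')^{(0)}=(B_r^{(0)})'$ follows in both cases $(\alpha)$, $(\beta)$.

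For (b) and (c) I would first recall the over-$k$ statements: $(B_r^{(0)})'$ is a \emph{good} basic object, $\sg((B_r^{(0)})')=\ma(\o_r)$, and a center $C^{(0)}\subset W_r^{(0)}$ is permissible for $(B_r^{(0)})'$ if and only if it is $\o$-permissible for $B_r^{(0)}$; in that case, for each component, $\nu(J^{(0)},C'^{(0)})=b'$, $\nu(\bbar{I_r^{(0)}},C'^{(0)})=b_r$ and $\nu(I_r^{(0)},C'^{(0)})\ge b$. By (a), the equivalence ``$\o$-permissible $\Leftrightarrow$ $B_r'$-permissible'' then reduces to showing that the two extra fibre conditions built into these notions over $A$ agree. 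The direction $\o\Rightarrow B_r'$ is easy: from $\nu(\bbar{I_r},C')=b_r=\nu(\bbar{I_r^{(0)}},C'^{(0)})$ together with $\nu(\scr{C}_r,C')=\nu(\scr{C}_r^{(0)},C'^{(0)})$ (automatic, $\scr{C}_r$ being monomial in the exceptional divisors), substituting into the defining formula of $J$ gives $\nu(J,C')=\nu(J^{(0)},C'^{(0)})$, which is $\ge b'$ by the recollection. For $B_r'\Rightarrow\o$ I would argue: $B_r'$-permissibility plus goodness over $k$ yields $\nu(J,C')=\nu(J^{(0)},C'^{(0)})=b'$ (this is already (c) once (b) is in hand); then, since orders do not drop modulo $r(A)$ (\ref{V:bn7.0}), $\nu(\bbar{I_r},C')\le b_r$, and inspecting the formula for $J$ --- directly in case $(\alpha)$, and via $b_r(b-b_r)=\min((b-b_r)\nu(\bbar{I_r},C'),b_r\nu(\scr{C}_r,C'))$ in case $(\beta)$ --- forces $\nu(\bbar{I_r},C')=b_r=\nu(\bbar{I_r^{(0)}},C'^{(0)})$; finally the factorisation $I_r=\scr{C}_r\bbar{I_r}$ (proof of \ref{P:nuom}), valid also over the fibre, propagates this equality to $\nu(I_r,C')=\nu(I_r^{(0)},C'^{(0)})\ge b$, so $C$ is $\o$-permissible. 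I expect this last direction to be the main obstacle: every order in sight must be determined \emph{exactly}, not merely bounded, before the extra fibre condition can be moved between $B_r$ and $B_r'$, and this rests precisely on goodness of $(B_r^{(0)})'$ over $k$ and on the multiplicative decomposition $I_r=\scr{C}_r\bbar{I_r}$.

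For (d), $\max(\o_r)=\max(\o_{r+1})$ means $b_{r+1}=b_r$, so $B_r'$ and $B_{r+1}'$ fall under the same case. Writing $\cE=I(H_{m+r+1})$ for the new exceptional divisor and using that $C$ is $\o$-permissible --- whence $\bbar{I_r}$ has order exactly $b_r$ along $C$, so that $\bbar{I_r}\cO_{W_{r+1}}=\cE^{\,b_r}\bbar{I_{r+1}}$ --- a short computation identifies the controlled transform $(B_r')_1$ with $B_{r+1}'$: in case $(\alpha)$ directly, and in case $(\beta)$ after also checking $\scr{C}_r\cO_{W_{r+1}}=\cE^{\,b-b_r}\scr{C}_{r+1}$ (compare exceptional exponents, using $I_r\cO_{W_{r+1}}=\cE^{\,b}I_{r+1}$ and $I_r=\scr{C}_r\bbar{I_r}$), which turns $\cE^{-b'}(\bbar{I_r}^{\,b-b_r}+\scr{C}_r^{\,b_r})\cO_{W_{r+1}}$ into $\bbar{I_{r+1}}^{\,b-b_r}+\scr{C}_{r+1}^{\,b_r}$, with index $b'=b_r(b-b_r)=b_{r+1}(b-b_{r+1})$.
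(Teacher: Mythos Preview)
Your overall strategy matches the paper's, and parts (a), (c), and (d) are fine. The genuine gap is in the implication $B_r'\Rightarrow\o$ of (b), case $(\beta)$. You write
\[
b_r(b-b_r)=\min\bigl((b-b_r)\,\nu(\bbar{I_r},C'),\,b_r\,\nu(\scr{C}_r,C')\bigr),
\]
but what is actually true is $\nu(J,C')=\min\bigl(\nu(\bbar{I_r}^{\,b-b_r},C'),\,\nu(\scr{C}_r^{\,b_r},C')\bigr)$. Passing from $\nu(\bbar{I_r}^{\,b-b_r},C')$ to $(b-b_r)\,\nu(\bbar{I_r},C')$ uses multiplicativity of order under powers, and this fails over a non-reduced base: in the completion $A'[[x_1,\ldots,x_r]]$ the associated graded ring $A'[X_1,\ldots,X_r]$ is not a domain when $A'$ has nilpotents, so an element $g\in\bbar{I_r}$ of order $q$ whose leading form lies in $r(A')\cdot A'[X]$ may well satisfy $\nu(g^{b-b_r})>(b-b_r)q$. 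Thus from $\nu(\bbar{I_r}^{\,b-b_r},C')\ge b'$ you cannot conclude $\nu(\bbar{I_r},C')\ge b_r$. (Your treatment of $\scr{C}_r$ is fine, since it is locally a monomial in part of an $A$-regular system of parameters, hence its leading form is a non-zero-divisor.)

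The paper closes this gap by a different device. From the fibre equality $\nu(\bbar{I_r}^{(0)},C'^{(0)})=b_r$ one can choose $h\in\bbar{I_r}$ whose degree-$b_r$ homogeneous part $M_{b_r}$ in $A'[[x_1,\ldots,x_r]]$ reduces to a nonzero form modulo $r(A')$, hence is a \emph{non-zero-divisor}. If, by contradiction, some $g\in\bbar{I_r}$ had order $q<b_r$ with leading form $M'_q\neq 0$, then $g\,h^{\,b-b_r-1}\in\bbar{I_r}^{\,b-b_r}\subset J$ has leading form $M'_q\,M_{b_r}^{\,b-b_r-1}\neq 0$ (since $M_{b_r}$ is regular), so its order is $q+b_r(b-b_r-1)<b_r(b-b_r)=b'$, contradicting $\nu(J,C')=b'$. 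This is exactly the point where the infinitesimal theory diverges from the field case; your ``$\min$'' shortcut is the field argument and does not survive the passage to $A\in\cA$.
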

\begin{proof}
(a) This readily follows from the definitions.

\smallskip

(b) From \cite{EV} (or by direct verification, which is easy) we have: 
$$(1) \qquad \nu({\bbar{I_r}}^{(0)},C^{(0)}) = b_r$$
Thus, to prove (b) we have to show: for a center $C$, 
$\nu(\bbar{I_r},C) = \nu(\bbar{I_r},C) \geq b_r ~ \Leftrightarrow ~
\nu({J_r},C) = \nu({J_r},C) \geq b' $. In case ($\alpha$), $J_r=\bbar{I_r}$, so the equivalence is obvious (and the $\geq$ symbols are equalities). So, consider case ($\beta$), where $J_r=(\bbar{I_r})^{b-b_r} + {\mathcal C}_r ^{b_r}$ and $b'=b_r(b-b_r)$. 

Check the implication $\Rightarrow$ first. By our assumption and (1), we have 
$\nu({\bbar{I_r}}^{b-b_r},C)=  \nu({\bbar{I_r} ^{(0)}}^{b-b_r},C^{(0)})=b_r(b-b_r)$. On the other hand, since our sequence of $A$-basic objects of \ref{D:B'}
 is permissible, 
$\nu({\mathcal C}_r,C) = \nu({\mathcal C}^{(0)}_r,C^{(0)})=b-b_r $ (the exponents $a_i$ are the same over $A$ or at the level of the special fiber). Then 
$\nu({\mathcal C}_r^{b_r},C) = \nu(({\mathcal C}^{(0)}_r)^{b_r},C^{(0)})=b_r(b-b_r) $ and the implication $\Rightarrow$ follows. 

Now we check the implication $\Leftarrow$. First, we claim that 
$ \nu(\bbar{I_r},C) =  \nu(\bbar{I_r}^{(0)},C^{(0)}) = b_r $. We always have 
$ \nu(\bbar{I_r},C) \leq  \nu({\bbar{I_r}}^{(0)},C^{(0)
}) = b_r $. To check that the sign $<$ cannot hold, we may assume $C$ is irreducible, with generic point $y$, and we work in the ring $R$, the completion of ${\cO}_{W,y}$ with respect to $(a_1, \ldots, a_m)$, an $A$-regular system of parameters which generates the ideal $I(C)_y$. We know 
$R = A'[[a_1, \ldots, a_r]]$ (a power series ring), where $A'$ is an appropriate local $A$-algebra. By (1), we may find in 
${\bbar{I_r}}R$ a series $h= M_0 + \cdots + M_{b_r}+ \cdots$ (sum of homogeneous parts in the power series ring), where $M_{b_r}$ is not a zero-divisor of $R$. Were, by contradiction, 
$\nu(\bbar{I_r},C) < b_r$, then we may find in $\bbar{I_r}$ a series $g$, $g=M'_q + \cdots$ (sum of homogeneous parts) where $M'_q \not= 0$ and $q < b_r$. But then the series 
$g \,(h^{b_r-1}) \in {\bbar I_r}^{b-b_r} \subset J_r$ and (because $M_{b_r}  
$ is not a zero-divisor) has order $< b_r(b-b_r)$, a contradiction. So, the equality holds.
But $B'_r$ induces over the special fiber the object ${{B_r}^{(0)}} '$, in particular 
$ C \subset \sg (B_r):=\sg(B_r^{(0)}$. In 
\cite{EV} it is proved that 
$\nu(I_0^{(0)},C^{(0)}) \ge b_r$. Thus, the implication $\Leftarrow$ is proved. It follows 
  $C$ is a $B_r$ center. 

\smallskip

(c) By (b), such a center $C$ is also a $\o$-center. Then, in case ($\alpha)$ the equality follows from the definition of $\o$-center. In case $(\beta)$, from this definition and Proposition \ref{P:nuom}. Note that in  this case $a_1 + \cdots + a_m = b-b_r$.

\smallskip

(d) The proof is a calculation entirely analogous to that for the case where $A=k$ is a field, and we shall omit it.
\end{proof}

If (using the notation above) $B'_r=(W_r \to S, J_r,b',E_r)$ is the object associated to an $A$-basic object $B_r=(W_r \to S, I_r,b,E_r)$,  given a closed point 
$y \in \sg(B'_r)={\rm Max}(\o _r)$ (\ref{P:proB'}) (b),   
we may find  an element 
$f \in {{\Delta}^{b'-1}(J_r/S)}_{y}$ defining, on a neighborhood of $y$, an $S$-hypersurface $Z$ satisfying condition (A1) of \ref{V:is2.0}. But in general (A2) of \ref{V:is2.0} won't be satisfied, hence $Z$ won't be adapted to 
${B_r '}$. To overcome this difficulty we introduce another $A$-basic object, denoted by  ${B_r}''$ (see \cite{EV}, 9.5). This object $B_r''$ will be essential in inductive arguments.
 
\begin{voi} 
\label{V:an1.0}
Here we use the notation of \ref{V:dt2.0}. Consider a $t$-permissible sequence of $A$-basic objects  
$$ (1)\quad B_0 \leftarrow \cdots \leftarrow B_r$$
where we write $B_j=(W_j \to S, I_j, b,E_j)$. 

(a) We shall say that an open set $U \subseteq W_r$ is {\it amenable} for $B_r$ if the following property holds. As usual, we write $t_r=({\o}_r,n_r)$. Let $\mathrm{max} ({t_r}_{|U}) = (\bbar{\o},\bbar{n})$ (where $\bbar{\o}= \mathrm{max} ({{\o}_r}_{|U})$). Then, we require that for hypersurfaces $H^*_1, \ldots, H^*_{\bbar{n}}$ in $E_r^-$ we have:
$$\mathrm{Max} ({t_r}_{|U})  = \mathrm{Max} ({{\o}_r}_{|U}) \cap H^*_1 \cap \cdots   H^*_{\bbar{n}} \cap U$$
These hypersurfaces are uniquely determined, by the maximality of $\bbar{n}$. Note that this concept depends not just on $B_r$ but also on its position in a $t$-permissible sequence like sequence (1) above.\\
If we may take $U=W_r$ as our amenable open, we say that $B_r$ is amenable.

\smallskip

(b) Note that if, in (1), $x \in \sg(B_r)$, by the upper-semicontinuity of $t_r$ and the definitions we may find an open neighborhood  $U$ of $x$ in $W_r$ such that $t_r(x)$ is the maximum of ${t_r}_{|U}$ and $U$ is amenable for $B_r$.

\smallskip

(c) {\it The object ${B_r}''$}. Let $U$ be a amenable open for $B_r$. Then we define a new $A$-basic object 
$({B_r}_{|U})''$ as follows. Let $B_r ' = (W_r, J_r, b',E_r)$. Then, 
$({{B_r}_{|U}}) '' = (U, I''_r, b',E_r^{+})$, where 
$$I_r''=(J_r + I(H^*_1)^{b'} + \cdots + I(H^*_{\bbar n})^{b'})_{|U}, \quad
E_r^{+}= (E_r \setminus E_r ^{-})_{|U}$$ 
(concerning $E_r ^{-}$, see \ref{V:dt2.0}). By (b), given a point $x \in \sg(B_r)$ always there is a neighborhood $U$ os $x$ such that 
$({B_r}_{|U})''$ is defined.
 
 When $B_r$ is amenable and we take $U=us (B_r)$ we simply write $B_r ''$.
\end{voi}
\begin{pro}
\label{P:proB"} 
Let 
$B_0 \leftarrow \cdots \leftarrow B_r$ be a $t$-permissible sequence of $A$-basic objects, $B_j=(W_j \to S, I_j, b, E_j)$, where $B_r$ is amenable, and $B_r''$ the associated object just introduced. Then we have: \\
(a) $(B_r'')^{(0)} = (B_r^{(0)})''$ 
\\
(b)  A center $C$ for $B_r$ is $t$-permissible if and only if it is $B_r''$-permissible.
\\
(c) For every irreducible $B_r''$-center $C$, $\nu(I_r'',C)=\nu(I_r''^{(0)},C^{(0)})=b'$ (the index of $B''_r$).
\\
(d) Let $C \subset W_r$ be a $B_r$-center which is $t$-permissible (or $B_r''$-permissible, it is the same.) Consider the transformations $B_r'' \leftarrow (B_r'')_1$ and 
$B_r \leftarrow B_{r+1}$ with center $C$, and the  object $B_{r+1}''$ associated to $B_{r+1}$. Assume 
$\mathrm{max}(t_r) = \mathrm{max}(t_{r+1})$. Then, 
$(B_r'')_1=(B_{r+1})''$
\\
(e) $B_r''$ is a locally nice object.
\end{pro}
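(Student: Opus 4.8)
\emph{Plan.} The plan is to deduce (a)--(d) almost formally from Proposition \ref{P:proB'}, the point being that the ideal $I_r''$ of $B_r''$ differs from the ideal $J_r$ of $B_r'$ only by the summands $I(H^*_i)^{b'}$ --- where, $U$ being amenable, $\ma({t_r}_{|U})=\ma({\o_r}_{|U})\cap H^*_1\cap\cdots\cap H^*_{\bbar n}\cap U$ with $H^*_1,\dots,H^*_{\bbar n}\in E^-_r$ uniquely determined --- and that these summands are inert along any center contained in $H^*_1\cap\cdots\cap H^*_{\bbar n}$. One first records $\sg(B_r'')=\ma({t_r}_{|U})$, immediate from the formula for $I_r''$, from $\sg(B_r')=\ma(\o_r)$ (see \cite{EV} or \ref{P:proB'}), and from $\nu_x(I(H^*_i))\ge1\iff x\in H^*_i$. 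Part (a) then follows from $(B_r')^{(0)}=(B_r^{(0)})'$ (\ref{P:proB'}(a)), from $I(H^*_i)^{(0)}=I(H^{*(0)}_i)$, and from the fact that strict transforms and the whole permissible sequence --- hence $E^-_r$, $E_r\setminus E^-_r$ and the choice of the $H^*_i$ --- pass to the closed fiber (\ref{V:bn9.1}, \ref{V:bo4.0}). For (b): for a component $C'$ of a center, $\nu(I(H^*_i),C')=\nu(I(H^{*(0)}_i),C'^{(0)})=1$ whenever $C'\subseteq H^*_i$ (an $S$-hypersurface), so along a $C'$ lying in all the $H^*_i$ one has $\nu(I_r'',C')=\min(\nu(J_r,C'),b')$ and likewise on the fiber; if $C$ is $t$-permissible, each $C'$ lies in every $H^*_i$ and is $\o$-permissible, hence $B_r'$-permissible by \ref{P:proB'}(b), hence $\nu(J_r,C')=\nu(J_r^{(0)},C'^{(0)})=b'$ by \ref{P:proB'}(c), so $C$ is $B_r''$-permissible; conversely a $B_r''$-permissible $C$ forces $C'\subseteq H^*_i$ for all $i$, $\nu(J_r,C')\ge b'$ and $\nu(J_r,C')\le\nu(J_r^{(0)},C'^{(0)})$, while the needed equality $\nu(J_r,C')=\nu(J_r^{(0)},C'^{(0)})$ --- i.e.\ $\o$-permissibility of $C'$ --- is obtained by rerunning verbatim the ``$\Leftarrow$'' argument of \ref{P:proB'}(b) in the completion of $\cO_{W_r,y}$ along $I(C')_y$ (legitimate thanks to \ref{V:bn7.0}), after which $C\subseteq\ma({t_r}_{|U})$ and a count of the hypersurfaces of $E^-_r$ through closed points of $C$ gives $t$-permissibility. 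Part (c) is then immediate from (b), from \ref{P:proB'}(c), and from $\nu(I(H^*_i)^{b'},C)=\nu(I(H^{*(0)}_i)^{b'},C^{(0)})=b'$. Part (d) is a controlled-transform computation identical to the case $A=k$, exactly as in the proof of \ref{P:proB'}(d), and I would omit it.

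The substantive statement is (e). It suffices to produce, in a neighbourhood of each $w\in\sg(B_r'')$, an $S$-hypersurface adapted to $B_r''$; near a point outside $\sg(B_r'')$ the restriction of $B_r''$ has empty singular locus, hence is nice. By (a) the fiber $(B_r'')^{(0)}$ equals $(B_r^{(0)})''$, which by the field-case construction of \cite{EV}, 9.5 --- the very reason for interposing the object $B'$ --- is a good, locally nice $k$-basic object; so near $w$ there is an element $\bbar f$ of order one, $\bbar f\in\Delta^{b'-1}(I_r''^{(0)})_w$, defining a hypersurface $\bbar Z$ transversal to $(E^+_r)^{(0)}=(E_r^{(0)})^+$. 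Since $\Delta^{b'-1}$ commutes with restriction to the closed fiber (\ref{V:bn8.0}), the reduction $\Delta^{b'-1}(I_r''/S)_w\twoheadrightarrow\Delta^{b'-1}(I_r''^{(0)})_w$ is onto, so $\bbar f$ lifts to some $f\in\Delta^{b'-1}(I_r''/S)_w$; as $f^{(0)}=\bbar f$ has order one, $f$ is part of an $A$-regular system of parameters, so by \ref{V:is3.0} it cuts out, near $w$, an $S$-hypersurface $Z$ with $I(Z)\subseteq\Delta^{b'-1}(I_r''/S)$, i.e.\ condition (A1) holds. For (A2), $Z^{(0)}={\mathcal V}(\bbar f)=\bbar Z$ has normal crossings with $(E^+_r)^{(0)}$, and since for $S$-hypersurfaces normal crossings is detected on the closed fiber --- lift a fiber-regular system realizing the crossing to an $A$-regular system (\ref{V:bn4.0}, \ref{V:is1.0}) --- $Z$ is transversal to $E^+_r$ over $S$. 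Thus $Z$ is $B_r''$-adapted near $w$, so $B_r''$ is locally nice.

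The main obstacle is the field-case input in (e): one must already know that $(B_r^{(0)})''$ is nice, which is precisely the nontrivial content of the corresponding construction in \cite{EV} (the detour through $B'$ is made exactly so that the adapted hypersurface of $B''$ becomes transversal to the surviving divisors $E^+_r$). Everything else in (e) is the mechanical transfer to level $A$, resting on two facts: compatibility of the operators $\Delta^{b'-1}$ with passage to the fiber, and detectability of transversality of $S$-hypersurfaces on the fiber. A secondary but genuinely necessary point is that the completion/non-zero-divisor argument of \ref{P:proB'}(b) has to be repeated inside the ``$\Leftarrow$'' half of (b); it carries over without change only because of the structure results of \ref{V:bn7.0}.
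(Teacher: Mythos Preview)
Your treatment of (a)--(d) matches the paper's: both derive these formally from Proposition~\ref{P:proB'}, the definition of $I_r''$, and the amenability equality $\ma(t_r)=\ma(\o_r)\cap H^*_1\cap\cdots\cap H^*_{\bbar n}$. The paper does spell out the controlled-transform calculation for (d) explicitly rather than omitting it, but this is a presentational difference only. Your ``$\Leftarrow$'' argument in (b) is correct; the completion argument of \ref{P:proB'}(b) indeed only needs $\nu(J_r,C')\ge b'$ together with $C'^{(0)}\subseteq\ma(\o_r)$, both of which you have.

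For (e) you take a genuinely different route. The paper does \emph{not} invoke the field-case result from \cite{EV} as a black box. Instead it works directly over $A$: it goes back to the smallest index $s$ with $\max(t_s)=\cdots=\max(t_r)$, observes that at that level $E_s^+$ is \emph{empty} so condition (A2) is vacuous, produces the adapted hypersurface there from \ref{P:proB'}(c) and \ref{V:is3.0}, and then propagates it forward to level $r$ via Giraud's Lemma~\ref{L:gir} over $A$ (each new exceptional divisor enters $E^+$, and transversality is preserved because the center lies in the hypersurface). Your approach instead takes the adapted hypersurface on the fiber $(B_r^{(0)})''$ granted by \cite{EV} and lifts it to $A$, checking that (A1) lifts because $\Delta^{b'-1}$ surjects onto the fiber and (A2) lifts because transversality of $S$-hypersurfaces is detected on the closed fiber. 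Both arguments are valid. The paper's version is self-contained over $A$ and in effect re-proves the field case along the way; yours is shorter but imports the field-case niceness as an external input and relies on the two lifting facts you name (compatibility of $\Delta^{b'-1}$ with the fiber, and fiberwise detection of transversality), which are correct but not stated as lemmas in the paper.
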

 \begin{proof} 
(a) easily follows from the definitions. (b) follows from the definition of $I''_r$, using  \ref{P:proB'}(b) and the fact that 
$\mathrm{Max} (t_r) = \mathrm{Max} ({\o}_r) \cap H^*_1 \cap \cdots   H^*_{\bbar{n}}$
 (recall that here $U=W_r$). (c) follows from (c) of \ref{P:proB'} and the fact that such a center $C$ is contained in each hypersurface  $H^*_{i}$.
 
 Concerning (d), let $B_{r+1}''=(W_{r+1},I_{r+1}'', b'', E_r^+)$. We must prove that, via the morphism 
 $p: W_{r+1} \to W_r$ (the blowing up with center a $t$-permissible center $C$, with exceptional divisor $H$),  $I_{r+1}''$ coincides with the controlled transform   $I(H)^{-b''}I_r''{\cO}_{W_{r+1}}$ $=L$. Let $H'_j$ denote the strict transform of $H_j^*$ via $p$, note that $H \in E_{r+1}^{+}$ (because ${\rm max}(t_r)={\rm max} (t_{r+1})$). Letting 
 ${\cO}':={\cO}_{W_{r+1}}$, we have:
 \\
  $L = I(H)^{-b'} I_r''{\cO}'=
 I(H)^{-b'}[I_r' + I(H_1^*)^{b'}+ \cdots + I(H_N^*)^{b'}]{\cO}' =
 \\
 I(H)^{-b'}I_r'+ (I(H)^{-1}I({H_1}^{*}))^{b'} + \cdots +(I(H)^{-1}I({H_N}^{*})^{b'}]{\cO}'=
 \\
 {I_{r+1}}' + {I(H_1')}^{b'}+ \cdots + {I(H_N')}^{b'}$
\\
and the latter is precisely $I_{r+1}''$, as desired.

(e) Consider the smallest index $s$ such that 
${\mathrm{max}}(t_s) = \ldots = {\mathrm{max}}(t_r)$. As usual, $t_j$ denotes the $j$-th $t$-function of the sequence of special fibers. Working with $B_s''$, by 
 \ref{P:proB'} (c) and \ref{V:is3.0}, 
 if $x \in \sg(B_s'')$ is a closed point we may get an open neighborhood $V$ of $x$ in $W_s$ and an element 
$h \in \Gamma(V, \Delta^{b''-1}(I_s''/S))$ defining an $A$-hypersurface on $V$. This is an adapted hypersurface (for $B_r''$ restricted to $V$). Indeed, clearly (A1) of \ref{V:is2.0} is satisfied, and (A2) is obvious because $E_s^*$ is empty. Letting $x$ vary in the (dense) set of closed points of 
 $\sg(B_s'')$  we get an open covering 
$\{ V_j \}$, $j=1, \ldots, q$ of $\sg(B_s'')$ and hypersurfaces $Z_j$ defined on $V_j$ and adapted for 
the restriction of $B_s ''$ to ${V_j}$, for all $j$. Transforming with center $C_s$, letting $V_{1j}$ be the pre-image of $V_j$ in $W_{s+1}$ and $Z_{1j}$ the strict transform of $Z_j$, by Giraud's Lemma \ref{L:gir}, $Z_{1j}$ is locally defined by an element of 
$\Delta ^{b''-1}(I_{s+1}''/S)$ and is transversal to ${E_{s+1}}^+ $ (which consists of the exceptional divisor $H_1$ of the transformation only), because $C_s \cap V_j$  is contained in $Z_j$. Thus, $Z_{1j}$ is adapted to the restriction of $B_{s+1}''$ to $V_{1j}$, for all $j$. Reiterating, we get 
a covering of $\sg(B''_r)$ by open sets $V_{(r-s),j}$, $j=1, \ldots, q$ of $W_r$ 
 and on each $V_{(r-s),j}$ an $A$-hypersurface  $Z_{(r-s),j}$, adapted to the restriction of $B_r''$ to $V_{(r-s),j}$, for all $j$. Thus, $B_r''$ is locally nice, as claimed.
\end{proof}
 \begin{voi}
\label{V:an2.0}
{\it More on the resolution algorithm over a field.} 
Now we check the assertions made in \ref{V:ag9}, by using  the theory of section \ref{S:HI} and the present one (in case $A=k$, a field). 
 
 We take  as the open set $U$ of \ref{V:ag9} an amenable neighborhood of $w$. Then the nice object $B_s''$ of \ref{V:an1.0} is defined on $U$, hence its associated homogenized object $\cH ({B_s}''):=(HB_s'')$ (which is again nice) admits an adapted hypersurface $Z_{sU}$ (or simply $Z_s$) containing $w$, defined on $U$. From the assumption ``$\dim (\ma(t_s) < d-1$'' made in \ref{V:ag7}, $Z_s$ is inductive. This is the hypersurface $Z_s$ of 
 \ref{V:ag7}, while our object $B^*_s$ is $(HB''_s)_{Z_s}$ (\ref{V:is6.0} (b)). (More properly, we should write, e.g.,  
 $B^*_{s,U}={{\cH}(({B_s}_{|U})'')_{Z_{sU}}}$ rather than $B^*_s$, we try to simplify the notation).\\
Now we check properties (i)-(iv) of \ref{V:ag7}.

(i) By \ref{V:is8.0}, 
  $\sg(B^*_s)=\sg(H{B_s}'')$. Since, by Proposition \ref{P:wide}, ${B_s}''$ and $H{B_s}''$ are W-equivalent, we obtain  $\sg(H{B_s}'')=\sg ({B_s}'')$. Combining these facts, we get (i) of \ref{V:ag7}.

(ii) By repeated application of \ref{V:is8.0} (i), (ii), from the algorithmic resolution (2) of \ref{V:ag7} (obtained by the inductive hypothesis) we get a permissible sequence 
$$(1) \qquad (H{B_s}'') \leftarrow (H{B_s}'')_1 \leftarrow \cdots $$
By the W-equivalence of $(H{B_s}'')$ and ${B_s}''$ (\ref{P:wide}), the sequence (1) induces (by using  the same centers) a permissible sequence 
$$(2)\qquad {B_s}'' \leftarrow ({B_s}'')_1  \leftarrow \cdots $$
which, by  \ref{P:proB"} (b), induces (by using again the same centers) a $t$-permissible sequence 
$$(3) \qquad {\widetilde{B}_{s}} \leftarrow \widetilde{B}_{s1} \leftarrow \cdots $$
as claimed in (ii) of \ref{V:ag7}.

(iii) By \ref{L:gir}, $us(B^*_{s+j})$ may be identified to the strict transform $Z_j$ of $Z_s$ to the scheme $us((H{B_s}'')_j)$. By the W-equivalence of ${B_s}''$ and $(H{B_s}'')$, 
$us((H{B_s}'')_j)=us(({B_s}'')_j)$. But, from our assumption 
``${\rm max}({\widetilde t_{s}})= \ldots = {\rm max}({\widetilde t_{s+q}})  $'', we get $({B_s}'')_j= {(\widetilde{B}_{r+j})}$. Since 
$Z_j = us(B^*_{s+j})$,  the fact that $us(B^*_{s+j})= {\rm Max} ({\widetilde t_{s+j}}) $ follows from \ref{P:proB"}.

(iv) Since $\sg(B^*_{s+j})$ can be identified to  ${\rm Max} ({\widetilde t_{s+j}}) \subseteq \sg(B_{s+j})$, ${\widetilde g_{j}}$ defines a function as claimed. The fact that its value ${\widetilde g_{j}}(x)$ is well-defined comes from the following observation, by restricting to a suitable amenable open neighborhood of $x$ and using the compatibility of our constructions with etale pull-backs, in particular open inclusions.

Let $B=(U \to S, I, b, E)$ be a nice basic object, $(HB)$ its associated homogenized (nice) object, $Z,\,Z'$ adapted hypersurfaces for $(HB)$, ${B}^*=(HB)_{Z} $, ${B'}^*=(HB)'_{Z'}$. Consider the resolution functions 
$g_0, \ldots, g_{m}$ of ${B}^*$ and 
$h_0, \ldots, h_{m'}$ of ${B'}^*$
 respectively (known by induction on the dimension). We claim that $g_i=h_i$, for all $i$. This means that $m=m'$, for all $i$ both $g_i$ and $h_i$ have the same domain, and they take equal values. \\
Indeed, for $j=0$, note that, by \cite{EV},  
$S_0:=\sg ({B}^*)=\sg((HB)_Z)= \sg((HB))=\sg((HB)_{Z'})= \sg ({B}'^*)$  
hence the domains of $g_0$ and $h_0$ agree. If $x \in S_0$, by \ref{T:etalez} we get etale neighborhoods 
$p:(Y,y) \to (Z,x)$ and $p':(Y,y) \to (Z',x)$ such that 
${p}^*({B}^*)={p'}^*({B'}^*):= \bbar B$. By the compatibility of the algorithm with etale morphisms, 
 ${p}^*({{g_0}})$ and ${p'}^*({{h_0}})$ agree on $\sg   ({\bbar B})$. Hence, 
$ {\widetilde {g_0}} (x) =  {p}^*({\widetilde {g_0}})(y)      =  {p'}^*({\widetilde {h_0}})(y) ={\widetilde {h_0}} (x)$. Consequently, 
${\rm Max} (g_0) = {\rm Max} (h_0)$ (say, $=C$). But $C$ is the first center we use in the resolution both for $B^*$ and ${B'}^*$. By \cite{EV}, $C$ is also a 
permissible center for $(HB)$ (or its W-equivalent object $B$). Transform these objects with center $C$, getting ${B}^* \leftarrow {{B}^*}_1$, ${B'}^* \leftarrow {{B'}^*}_1$, 
${HB} \leftarrow {{HB}}_1$, ${B} \leftarrow {{B}}_1$ respectively. We know that $us({{B}^*}_1)$ (resp. $us({{B'}^*}_1)$) is the strict transform $Z_1$ (resp. $Z'_1$) of $Z$ (resp. $Z'$) to $U_1:=us((HB)_1)=us(B_1)$ (use \ref{L:gir}). Again 
$S_1=\sg ({{B}^*}_1)=\sg((HB_1))=\sg ({{B'}^*}_1)$ (both $=\sg (B_1)$). Using again 
\ref{T:etalez}, given an arbitrary point $x_1$ in $S_1$, lying over $x_0 \in S \subset U$, we get commutative cartesian diagrams 
\begin{displaymath}
\begin{array}{ccccccccc}

{Y}&{{\stackrel{{q}_1} \leftarrow}}&{Y_1}&{}&{}&{Y}&{{\stackrel{{q}_1}      \leftarrow}}&{Y_1} \\

{\enskip{\downarrow \!{\scriptstyle{p}}}}&{}&{\enskip
{\downarrow \! {\scriptstyle{p_1}}}}&{}&{}&     
{\enskip{\downarrow \!{\scriptstyle{p' }}}}&&{\enskip{\downarrow \! 
{\scriptstyle {p' _{1}}}}}\\

{Z}&{{\stackrel{\rho_1} \leftarrow}}&{Z_1}&{}&{}&{Z'}&{{\stackrel{{{\rho}'}_1}      \leftarrow}}&{Z'_1}

\end{array}
\end{displaymath}
(where, by the compatibility of the resolution algorithm with etale pull-backs, 
${{q}_1} $ is the first transformation in the algorithmic resolution process for
${\bbar {B}} $) such that 
${p_1}^*({{B}^*}_1)={{{p'}_1}^*}({{B'}^*}_1):= {\bbar B}_1$. Then, both 
${p_1}^*({g_0})$ and ${p'_1}^*({{h_1}})$ must be the first resolution function for ${\bbar B}$, hence they must agree. Exactly as in case $j=0$ we conclude that $g_1(x_1)=h_1(x_1)$. Since $x_1$ was arbitrary in $S$, $g_1=h_1$. We may iterate this procedure, each time using \ref{T:etalez} to show that $m=m'$ and $g_i = h_i$ for all $i$.

We should check that the functions $g_j$ thus defined are compatible with etale pull-backs (we used this fact in the inductive step). But this is simple, and we leave it to the reader (or see \cite{BEV} or \cite{EV}).  
\end{voi} 

\section{Algorithmic equiresolution}
\label{S:AE} 
\begin{voi} 
\label{V:ae1.0}
In this section 
we  work with the VW-resolution algorithm (for basic objects over fields of characteristic zero) discussed in 
\ref{V:ag7}, \ref{V:ag9}  and \ref{V:an2.0}.  Henceforth this will be referred to as {\it the algorithm}. As usually, $\cA$ is the class of rings of \ref{V:bn1.0}.

We shall prove Theorem \ref{T:main}. To that effect, given a ring $A \in {\cA}$ and 
an $A$-basic object  $B=(W \to S,I,b,E)$, $S=\sp \, (A)$,  we shall introduce certain conditions 
${\cE}_{j},\, j=0,1, \ldots$, which may be valid or not. 
 Intuitively, these conditions work as follows. Let  
 $$(1) \qquad B^{(0)}:= B_0^{(0)} \leftarrow \cdots \leftarrow B^{(0)}_r$$
 be the algorithmic resolution of the  fiber $B^{(0)}$ of $B$, obtained by using algorithmic resolution functions $g_i, \, i=0, \ldots, r-1$, which yield resolution centers $C_i=\ma (g_i)\subset us(B^{(0)}_i)$. For $0 \le j < r$ the validity of ${\cE}_{j}$ means that all the operations  involved to complete the first $j-1$ steps in the algorithmic resolution (1) (i.e., what is necessary to obtain the $j$-truncation of (1)) can be extended, in a natural way, along $S=\Spec A$. Theorem \ref{T:main} will be an immediate consequence of this theory. Let us discuss these conditions more carefully.

We define  conditions $\cE _j$ for every non-negative integer. We declare them  vacuously valid for  
$j \geq r$. In particular if, in (1), $r=0$, i.e., $\sg (B^{(0)})= \emptyset$, then $B$ is algorithmically equisolvable. So, assume $r > 0$. Then, for values  $j=0, \ldots, r-1$, condition ${\cE}_{j}$, if valid, defines centers $C_i$, $i=0, \ldots, j$, where more precisely $C_0$ is a center for $B=B_{0}$, $C_1$ is a center for 
$B_1={\cT}(B_{0},C_{0}), \ldots, C_{j}$ is a center for $B_{j}={\cT}(B_{j-1},C_{j-1})$. Thus, if 
${\cE}_{j-1}$ is valid, $1 \leq j \leq r$, one has an induced permissible sequence of $A$-basic objects  
$$(2)_{j} \qquad  B_{0} \leftarrow \cdots \leftarrow B_{j}$$
These conditions ${\cE}_{j}$ will satisfy the following properties: 

\smallskip

(a) The sequence $(2)_j$ induces, by taking fiber, the $j$-truncation of the algorithmic resolution of $B^{(0)}$. 

\smallskip

(b) If $i < j$, then the sequence $(2)_i$ is the $i$-truncation of the analogous sequence $(2)_j$.

\smallskip

(c) If $f:W' \to W$ is an etale morphism  and $B'$ is the $A$-basic object induced from $B$ by pull-back,    if condition ${\cE}_{j}$ is valid for $B$ then it is also valid for $B'$  and the centers $C'_{j}$ associated to $B'$ are the pull-backs of those associated by ${\cE}_{j}$ to $B$.

\smallskip

(d) The conclusion of (c) remains true if 
 $W'=W \times _{S} S'$, where $S' \to S$ is induced by a ring homomorphism (of rings in $\cA$) $A \to A'$ and $f:W' \to W$ is the first projection (i.e., ${\cE}_j$ is stable under change of the base ring $A$). 
 
These conditions, satisfying the mentioned properties, will be defined inductively on 
$d= {\rm dim} \, B$. This will be done in the following sub-sections. 
 But, accepting the results just stated, we may prove Theorem \ref{T:main} as follows.
 \end{voi}

 \begin{voi}
 \label{V:prova}
 {\it Proof of Theorem \ref{T:main}}. Given an $A$-basic object $B$, with fiber $B^{(0)}$, let condition ${\cE}_i$ be valid for $i=0, \ldots , s-1$. Then define $e(B) := s$. There is an associated $A$-permissible sequence 
 $B=B_0 \leftarrow \cdots \leftarrow B_s$ (this is $(2)_j$ of  \ref{V:ae1.0}, with $j=s$), using centers $C_i \subset us(B_i), ~ i=0, \ldots,s-1$. These are the centers we associate to $B$ to complete the definition of the function of Theorem \ref{T:main}. The claimed properties of this function follow from properties (a), (b), (c) and (d) of conditions ${\cE}_i$ stated in \ref{V:ae1.0}.
 \end{voi}
 
 \begin{Def}
 \label{D:aleq} Given an $A$-basic object $B$ as in \ref{V:ae1.0} we say that it is {\it algorithmically equisolvable} if 
conditions ${\cE}_{j}$ are valid for every $j \ge 0$. Equivalently, 
since conditions ${\cE}_{j}$ are vacuously valid for 
 $j \geq r$,  we may  require that ${\cE}_{j}$ be valid for $j=0, \ldots, r-1$ (with $r$ as in sequence (1) in \ref{V:ae1.0}) or that in  Theorem \ref{T:main} it must be $e(B)= \ell (B^{(0)})$. 
  
  In this case, the sequence $(2)_r$ of \ref{V:ae1.0} that is determined will be called the {\it algorithmic equiresolution sequence of} $B$.
 
 Of course,  an  $A$-basic object may not be algorithmically equisolvable. For instance, the basic object of  Example \ref{E:nohay} has non-empty singular locus but it does not admit any permissible center. Hence it cannot have any equiresolution.
 \end{Def}
 Now let us precisely define conditions ${\cE}_i$.
 
\begin{voi}
\label{V:ae2.0}
{\it Defining ${\cE}_j$ when dim $(B)=1$.} Let (1) of \ref{V:ae1.0} be  the algorithmic resolution of the special fiber $B^{(0)}$. We shall define, for $0 \leq j < r$, conditions ${\cE}_{j}$, in such a way that if ${\cE}_{j}$ is valid, then the resulting sequence $(2)_j$ (of \ref{V:ae1.0}) is $\rho$-permissible (see \ref{V:det4.0'}). 
 
Start with ${\cE}_0$. Consider $B:=B_0$. Notice that necessarily max$(\o _0)>0$. In this case, the zeroth algorithmic center $C^{(0)}$ of the fiber $B^{(0)}$ is Max($t_0)$. To define ${\cE}_0$, take an open cover 
$\{U_i\}$ of Max($t_{0}$) such that each $U_{i}$ is amenable for $B$, hence $ (B_{|U_i})'' :=  B''_i={(U_i \to S, I''_i,b'', E''_i)}$ is defined (see \ref{V:an1.0}). Then, to have ${\cE}_0$ valid,  we require that, for all $i$,  
$\Delta^{b''-1}(I''_i/S)$ define a $B''_i$-permissible center $C_i$. Since the locally defined objects $B''_i$ agree on intersections (this follows from the construction in \ref{V:an1.0}), this requirement is independent of the choice of the cover and the different $C_i$ glue together to yield a well-defined center $C$ which is $B$-permissible (because, by \ref{P:proB"}, $C$ restricted to each $U_i$ is $B_{|U_i}$- permissible). This $C$ is the center ${\cE}_0$ defines. Property  (a) is true  because because $(B_{|U_i})''$ restricted to the fiber is 
$({B^{(0)}}_{|U_i})''$ and ${\sg {({B^{(0)}}_{|U_i})''}}= {\mathcal V}(\Delta ^{b-1}({I^{(0)}}_{|{U_i}}))={\rm Max} (t_0) \cap U_i$. Property (b) is clear. Moreover, $C$ is $t$-permissible (\ref{P:proB"} (b)).

Now, if $j>1$, assuming ${\cE}_s$ defined for $s < j$, we introduce condition ${\cE}_j$. Looking at $B^{(0)}_j$, there are two cases: ($\alpha$) max$(\o _j)>0$, ($\beta$) max$(\o _j)=0$. 

Consider case ($\alpha$). To declare ${\cE}_j$ valid, first we  require that conditions ${\cE}_s$ be valid, for $s < j$. Hence, we have a $t$-permissible sequence $B_0 \leftarrow \cdots \leftarrow B_{j}$. Recall that in this case the $j$-th center $C^{(0)}_j$  in (1) of \ref{V:ae1.0} is Max($t_j$).  Apply to $B_{j}$ the technique used in the case $j=0$. Namely, cover Max($t_{j}$) by amenable open sets, so that   nice basic $A$-objects 
$(B''_{j})_i= (U_i \to S,I''_{i},b''_i,E''_i)$  are defined (see \ref{V:an1.0}). To finish the definition of ${\cE}_j$ in this case   we require that the subscheme defined by 
$\Delta ^{{b''_i} -1}(I''_i/S)$ be a $(B_{j}'')_i$-center $C_{j i}$, for all $i$. As above, this is independent of the chosen cover, and these centers patch together to produce a $t$-permissible center $C_{j}$ for $B_j$, which will be the $B_j$-center determined by ${\cE}_j$.   Requirement (a) is treated as above (case $j=0$) and (b) is clear.   
 
Now consider case ($\beta$). Here, the object $B_j$ is premonomial. To have condition  ${\cE}_j$ satisfied we require that $B_j$ be monomial, and we take as the associated center the canonical monomial center. The verification of  properties (a) though (d) is simple.

\end{voi}
\begin{voi}
\label{V:ae2.1} It remains to check, in case ($\alpha$), properties (c) and (d). We are not going to present the details of this verification, either here or  in the other parts of the discussion of conditions ${\cE}_j$. In fact, the verification is either immediate or a  consequence of calculations in certain local rings (see, e.g.,  \ref{V:bn8.0} and \ref{P:ekiv}). These calculations are simple once we use the following observations.

For (c), use the identification of Proposition \ref{P:twocomp}. Concerning (d), note that if $B={(W \to {\Spec A},I,b,E)}$ is an $A$-basic object, $w \in W$ a closed point, $A \to A'$ a homomorphism in $\cA$, $W'=W \times _{A} A' \to W$ the first projection, $w' \in W'$ a point lying over $w$, $R = {\cO}_{W,w} \stackrel{\phi}{\rightarrow}{\cO}'_{W',w'}=R'$ the induced homomorphism, then: (a) an $A$-regular system of parameters 
${\bf a}=(a_1, \ldots, a_n)$ of $R$ 
maps into an $A'$-regular system of parameters 
${\bf a'}=(a'_1, \ldots, a'_n)$ of $R'$, (b) if 
 $\widehat {R}$ (resp. $\widehat {R'}$) is the completion of $R$ with respect to  ${\bf a}$ (resp. of $R'$ with respect to  ${\bf a}'$) then there is an identification of 
 ${\widehat R}$ and the power series ring $  A_1[[x_1,\ldots,x_n]]$ (resp. ${\widehat R} = A'_1[[x_1,\ldots,x_n]]$) where 
 $A_1=R/(a_1, \ldots,a_n)$ (resp. $A'_1=R'/(a'_1, \ldots,a'_n)$), with these, $\phi$ induces the natural homomorphism from 
 $A_1=R/(a_1, \ldots,a_n)$ to $A'_1=R'/(a'_1, \ldots,a'_n)$ (sending each $x_i $ to itself); we also use the corresponding known result in case $A$ and $A'$ are fields (\ref{V:ag10.1}).
 \end{voi}

In the discussion of the  case where the dimension $d$ of $B$ is arbitrary we shall need a lemma
 that we state next. Its proof will be presented in \ref{V:ae5.0}. Informally, it says that if $(HB)$ is the homogenized $A$-object associated to $B$ (nice), $Z$ and $Z'$ are inductive hypersurfaces for $(HB)$ and we assume, inductively on the dimension, that the inductive objects $(HB)_{Z}$ and $(HB)_{Z'}$ are algorithmically equisolvable, then the $0$-th centers $C_0$ and $C'_0$ of their algorithmic equiresolutions coincide: $C_0 = C'_0$. Moreover, under suitable strong permissibility assumptions similar equalities hold for the other algorithmic resolution centers associated to  $(HB)_{Z}$ and $(HB)_{Z'}$.

\begin{lem}

\smallskip

\label{L:pego}
 In the previous notation, assume conditions ${\cE}_{j}$, satisfying (a), (b), (c) of \ref{V:ae1.0}, are defined for every possible $j$ when the dimension of our basic object is $<d$. Let $B=(W \to S,I,b,E)$ be a nice $d$-dimensional basic object over $A$, $(HB):={\cH}(B)={(W \to S, \cH (I/S,b),b,E)}$ (cf. \ref{D:ho}, it is again nice), $Z$ and $Z'$  inductive hypersurfaces for $(HB)$. Let $(HB)_Z$ 
 and $(HB)_{Z'}$ be the corresponding inductive objects. Assume that both 
 $(HB)_Z$ and $(HB)_{Z'}$ are algorithmically equisolvable (i.e., they satisfy conditions  ${\cE}_j$ for all possible $j$, since their dimensions are equal to $d-1$, by our hypothesis these conditions are defined). Consider the corresponding algorithmic equiresolution sequences 
$$(1) \qquad  (HB)_{Z}= ((HB)_{Z})_{0} \leftarrow \cdots \leftarrow ((HB)_{Z})_t$$
$$(2) \qquad (HB)_{Z'}= ((HB)_{Z'})_{0} \leftarrow \cdots \leftarrow ((HB)_{Z'})_t$$ 
(Note that the length of these sequences, which dependes on the special fibers only, must be the same, by the results of \ref{V:an2.0}). 
 Assume that both (1) and (2) are strongly permissible sequences (see \ref{V:is10.0}), determined by centers $C_i \subset us(((HB)_{Z})_i)$, $C'_i \subset us(((HB)_{Z'})_i)$, $i=0, \ldots, t-1$ respectively. Let     
 $$(3) \qquad  (HB)_0 \leftarrow  (HB)_1  \leftarrow \cdots \leftarrow (HB)_t \, , $$
$$(4) \qquad  (HB)_0=(HB)'_0 \leftarrow (HB)'_1 \leftarrow \cdots \leftarrow (HB)'_t $$
be the   induced permissible sequences 
 (see \ref{V:is10.0}). By  \ref{L:gir}, if  
  $Z_i$ and $Z'_i$ are  the strict transforms of $Z$ and $Z'$ to $(HB)_i$ and $(HB)'_i$ respectively, 
 there are identifications 
 $ Z_i = us((HB)_Z)_i$, $ Z'_i = us((HB)_Z')_i$.\\
 Then, using these identifications, $C_0 = C'_0$,  hence $us(HB)_1=us((HB)'_1)$; using this fact, 
 $C_1=C'_1$ (an equality of subschemes of $us((HB)_1)$, hence $(HB)_2=(HB)'_2$), and so on, eventually getting $C_{\lambda}=C'_{\lambda}$, (as subschemes of $us((HB)_{\lambda})=us((HB)'_{\lambda})$, ${\lambda}=0, \ldots, t$. 
\end{lem}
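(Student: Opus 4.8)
The plan is to prove this by induction on $\lambda$, the key point at each step being the compatibility of the resolution algorithm with étale pull-backs together with Theorem~\ref{T:etalez}. First I would treat the base case $\lambda = 0$. Both sequences (1) and (2) are algorithmic equiresolutions, and by definition their zeroth centers are $C_0 = \ma(\widetilde g_0)$ and $C'_0 = \ma(\widetilde h_0)$, where $\widetilde g_0$, $\widetilde h_0$ are the zeroth resolution functions of $(HB)_Z$, $(HB)_{Z'}$ respectively. Since $Z$ and $Z'$ are both inductive hypersurfaces for the \emph{same} homogenized object $(HB)$, I can apply Theorem~\ref{T:etalez} (with $r = 0$) to obtain, for any point $x$ in $S_0 := \sg((HB)_Z) = \sg((HB)) = \sg((HB)_{Z'})$ (the equalities by \ref{V:is8.0} and Proposition~\ref{P:wide}), a common étale neighborhood $Y_0$ with étale maps $\pi_0 : (Y_0,y) \to (Z,x)$, $\pi'_0 : (Y_0,y) \to (Z',x)$ such that $\pi_0^*((HB)_Z) = \pi_0'^*((HB)_{Z'})$. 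By the étale-functoriality of the algorithm, $\pi_0^*(\widetilde g_0)$ and $\pi_0'^*(\widetilde h_0)$ are both the zeroth resolution function of this common pulled-back object, hence agree; pulling back along the étale surjection then gives $\widetilde g_0(x) = \widetilde h_0(x)$. Since $x$ was arbitrary, $\widetilde g_0 = \widetilde h_0$ as functions on $S_0$, whence $C_0 = C'_0$ as subschemes of $us((HB)_Z)_0 = Z_0 = Z'_0$.

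Next I would carry out the inductive step, which is essentially a reiteration of the base case after transforming. Assume $C_i = C'_i$ has been established for $i = 0, \ldots, \lambda - 1$, so that the identifications $us((HB)_i) = us((HB)'_i)$ and $Z_i = us((HB)_Z)_i$, $Z'_i = us((HB)_{Z'})_i$ are all in force and the sequences (3), (4) coincide up to level $\lambda$. By hypothesis the sequences (1), (2) are strongly permissible, so $C_{\lambda}$ is simultaneously permissible for $((HB)_Z)_{\lambda}$ and for $(HB)_{\lambda}$, and likewise for the primed objects; by \ref{L:gir} the strict transforms $Z_{\lambda}$, $Z'_{\lambda}$ remain inductive hypersurfaces for $(HB)_{\lambda} = (HB)'_{\lambda}$. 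Now $C_{\lambda} = \ma(\widetilde g_{\lambda})$ and $C'_{\lambda} = \ma(\widetilde h_{\lambda})$, where $\widetilde g_{\lambda}$, $\widetilde h_{\lambda}$ are the $\lambda$-th resolution functions of $(HB)_Z$, $(HB)_{Z'}$. Applying Theorem~\ref{T:etalez} now with parameter $r = \lambda$ to the two strongly permissible sequences (1), (2) over the common $(HB)$, I get for each point $x_{\lambda}$ of $S_{\lambda} := \sg((HB)_Z)_{\lambda} = \sg((HB)_{\lambda}) = \sg((HB)_{Z'})_{\lambda}$ (equalities again by \ref{V:is8.0}, \ref{P:wide} and repeated use of \ref{V:is8.0}(1)) a common étale neighborhood $Y_{\lambda}$ with étale morphisms $\pi_{\lambda} : Y_{\lambda} \to Z_{\lambda}$, $\pi'_{\lambda} : Y_{\lambda} \to Z'_{\lambda}$, compatible with the diagrams of \ref{T:etalez}, such that $\pi_{\lambda}^*(((HB)_Z)_{\lambda}) = \pi_{\lambda}'^*(((HB)_{Z'})_{\lambda})$. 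By étale-functoriality of the algorithm, $\pi_{\lambda}^*(\widetilde g_{\lambda})$ and $\pi_{\lambda}'^*(\widetilde h_{\lambda})$ both equal the $\lambda$-th resolution function of the common pullback, so $\widetilde g_{\lambda}(x_{\lambda}) = \widetilde h_{\lambda}(x_{\lambda})$. As $x_{\lambda}$ ranges over $S_{\lambda}$ this yields $\widetilde g_{\lambda} = \widetilde h_{\lambda}$, hence $C_{\lambda} = C'_{\lambda}$ as subschemes of $us((HB)_Z)_{\lambda} = us((HB)_{Z'})_{\lambda} = Z_{\lambda} = Z'_{\lambda}$ (the last identifications by \ref{L:gir}); and since both are $B_{\lambda}$-centers in $us((HB)_{\lambda}) = us((HB)'_{\lambda})$, we also get $us((HB)_{\lambda+1}) = us((HB)'_{\lambda+1})$, which closes the induction.

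The main obstacle I expect is bookkeeping the identifications rather than any deep difficulty: one must check that the étale neighborhoods $Y_i$ produced by Theorem~\ref{T:etalez} at successive levels fit into the cartesian towers of that theorem, so that the resolution functions really are being compared on matching loci, and that ``strongly permissible'' is exactly the hypothesis that lets the centers $C_i$ (defined a priori on $Z_i$) be transported to $us((HB)_i)$ and back without ambiguity. One should also be slightly careful that the comparison is genuinely local on $S_i$ — the functions $\widetilde g_i$, $\widetilde h_i$ are defined only near the relevant fiber point $w$, as in \ref{V:ag7}(iv), so the argument is really ``for every $x_i$ there is an étale neighborhood on which the two agree,'' and one concludes global equality because agreement on an étale cover forces agreement of the upper-semicontinuous functions. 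All of this is modeled verbatim on the field-coefficient argument carried out in \ref{V:an2.0}; the only new input needed over an artinian $A$ is that Theorem~\ref{T:etalez} (hence \ref{T:etal}, hence Proposition~\ref{P:elauto} and Artin approximation) is already available in that generality, which is exactly how it is stated above.
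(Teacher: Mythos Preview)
Your overall strategy (induction on $\lambda$, Theorem~\ref{T:etalez}, \'etale functoriality) is exactly the paper's, but there is a genuine gap in the execution. You phrase the argument in terms of resolution functions: you write $C_0 = \ma(\widetilde g_0)$, $C'_0 = \ma(\widetilde h_0)$, and then argue $\widetilde g_0 = \widetilde h_0$ via \'etale pullback. But over an artinian $A$ the resolution functions $\widetilde g_i$ live only on the topological space, i.e.\ on the special fiber (cf.\ \ref{V:ag11}); the equality $\widetilde g_0 = \widetilde h_0$ therefore gives you $C_0^{(0)} = C'_0^{(0)}$ as reduced subschemes of $W^{(0)}$, not $C_0 = C'_0$ as $A$-subschemes. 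The whole point of the lemma is the latter, scheme-theoretic equality over $A$: the centers $C_i$ are produced by conditions $\cE_j$ in dimension $d-1$, and carry nilpotent structure that the functions $\widetilde g_i$ cannot see.

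The fix is exactly what the paper does in \ref{V:ae5.0}: first use property (a) of the $\cE_j$ to get $C_0^{(0)} = C'_0^{(0)}$ (this is essentially your resolution-function argument, or one can just quote \ref{V:an2.0}); then, for each closed point $y$ in this common fiber, apply Theorem~\ref{T:etalez} to obtain $\pi_0^*((HB)_Z) = {\pi'_0}^*((HB)_{Z'})$ and invoke property (c) of $\cE_0$ \emph{directly on the centers}: both $\pi_0^{-1}(C_0)$ and ${\pi'_0}^{-1}(C'_0)$ are the $\cE_0$-center of the common pulled-back $A$-basic object, hence equal as $A$-subschemes of $Y_0$, whence $C_0 = C'_0$ near $y$. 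The inductive step is the same correction. So your proof becomes correct once you replace ``\'etale functoriality of the resolution functions'' by ``property (c) of the conditions $\cE_j$ for the $(d-1)$-dimensional centers''.
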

\begin{voi}
\label{V:ae3.0}
{\it Defining conditions ${\cE}_j$ when ${\dim B}=d$, an arbitrary positive integer}. The case $d=1$ being explained, we shall proceed by induction on $d$. So, assuming the definition (having properties (a), (b), (c) and (d) of \ref{V:ae1.0}) known when the dimension of the $A$-basic object is less than $d$, we'll introduce conditions $\cE _j$ for $B$ $d$-dimensional. This will be done again by induction  on  $j$ .

Start with $j=0$. Let $B=B_{0}$ be our  $A$-basic object. Looking at the fiber $B^{(0)}$, its  ${\o}_0$ function must satisfy 
 max $({\o}_{0}) > 0$. Consider  
$M={\rm Max}(t_0)$. There are two cases:  $\dim M =d-1$ or  $\dim M < d-1$.

(I) {\it Case  $\dim M=d-1$}. Here, on the fiber $B^{(0)}$, the zeroth  algorithmic resolution center is $M(1)$, the union of the components of $M$ of codimension one.  We proceed as in the one-dimensional situation. Namely, 
consider locally defined nice $A$-basic objects $B''_i = {(U_i \to S, I''_i, b''_i, E''_i)}$ as in \ref{V:an1.0}, where the open sets $U_i$
cover $M(1)$ and, for all $i$, $U_i \cap Y = \emptyset$, for any irreducible component $Y$ of codimension $>1$ of $M$. (This is possible, since by \cite{EV} $M(1)$ is regular.) Then  we declare condition $\cE _{0}$ valid if for each $i$ the $W$-ideal $\Delta ^{b''_i - 1}(I''_i/S)$ defines a permissible center $C_i$. Again, since the considered nice basic objects agree on each intersection $U_i \cap U_{i'}$, the different $C_i$ define a $B$-center $C$. We take $C$ as the center that 
 $\cE _0$ associates to $B$. 
  From the construction of the nice object $B''_i$ (\ref{V:an1.0}), this is independent of the choice of the open cover $\{U_i\}$. 
 Conditions (a), (b), (c) and (d) of \ref{V:ae1.0} are easily checked (see \ref {V:ae2.0} and \ref {V:ae2.1}). This finishes the case  where $j=0$ and dim($M$)=$d-1$.

 (II) {\it Case $\dim M < d-1$}. 
 In this situation, the zeroth algorithmic center $C^{(0)}_{0}$ in the resolution sequence (1) of \ref{V:ae1.0} is defined inductively, on the dimension (using locally defined  basic objects ${B^{(0)}_{0}}''$, inductive hypersurfaces, homogeneization (\ref{D:ho}) and the corresponding inductive objects). Accordingly, we  
 cover $M=\max (g_0)$ by amenable opens $U_i$ ($i$ in a suitable set, $g_0$ is the $0$-th resolution function of $B_0^{(0)}$). Hence   on each $U_i$ there is a nice $A$-basic object $B''_i= {(U_i \to S, I''_i,b''_i, E''_i)}$ admitting an adapted hypersurface $Z_i \subset U_i$ (see \ref{V:an1.0}). For each $i$ take the homogenized object $(HB''_i):={\cH}(B''_i)=(U_i \to S,I^*_i,b''_i,E''_i)$ (where $I^{*}_i := {\cH}(I''_i/S,b''_i)$). The object $(HB''_i)$ is  again  nice, admitting $Z_i$ as an adapted hypersurface. From our assumption on dim($M$), by  \ref{P:proB"} (b) and \ref{V:is7.0}, $Z_i$ also satisfies (A3) of \ref{V:is2.0}, i.e., $Z_i$ is inductive. Consider the inductive object 
 $B^*_i := (HB''_i)_{Z_i}=(Z_i \to S, {\cC}(I^*_i/S,Z_i), \tilde{b_i}, \tilde{E_i})$, see 
 \ref{V:is6.0}  (a). By induction we know what it means the expression ``condition $\cE _s$ is valid for $B^*_i$, for every $i$ and $s$''. So, we declare: $\cE _0$ is valid for $B$ if the following conditions hold: (a) for all $i$, 
  $\cE _s$ is valid for $B^*_i$, for all $s \ge 0$; (b) the center $C_i$ that condition ${\cE}_0$ is associates to  $B^*_i$, is {\it strongly permissible} (\ref{V:is10.0}). Then 
  we claim that these centers $C_i$ agree on intersections, defining a $B$-permissible center $C \subset W$. These statements follow from the following :
  
  \smallskip
  
{\it Observation}:  Let $V_1$ and $V_2$ be amenable open neighborhoods of $x \in M$, such that nice basic objects $B''_1$ and $B''_2$, with inductive hypersurfaces $Z_1$ and $Z_2$, are defined on $V_1$ and $V_2$ respectively, as in \ref{V:an1.0}. Let $B^*_i:=(HB''_i)_{Z_i}$, $i=1,2$, both satisfying condition $\cE _s$, for all $s \ge 0$. Let 
$U \subseteq V_1 \cap V_2$ be an open neighborhood of $x$, $C_i$ the center that condition 
$\cE _0$ assigns to $B^*_i, ~i=1,2$. Assume $C_i$, $i=1,2$, are strongly permissible centers.  Then, 
$C_1 \cap U = C_2 \cap U$. 

To prove this observation note that, by the construction of  
\ref{V:an1.0}, ${B''_1}_{|U}={B''_2}_{|U}$ (say =$B''$). Now, both $Z_1 \cap U $ and $Z_2 \cap U$ are hypersurfaces on $U$ adapted to $B''$. Then by applying Lemma  
\ref{L:pego} (case $\lambda=0$) we get $C_1 \cap U = C_2 \cap U$, as needed.
\end{voi}
\begin{voi}
\label{V:ae4.0} 
 Now, inductively, assuming conditions $\cE _s$, satisfying properties (a), (b), (c) and (d) of \ref{V:ae1.0}, have been defined for $s < j$, $j > 0$, let's introduce condition $\cE _j$. By our inductive hypothesis (see \ref{V:ae1.0}, $(2)_{j}$) we get a permissible sequence of $A$-basic objects 
 $$ (1) \qquad B=B_0 \leftarrow \cdots \leftarrow B_j$$
inducing on fibers a sequence 
$$ (2) \qquad B^{(0)}=B_0^{(0)} \leftarrow \cdots \leftarrow B_j^{(0)}$$
(where (2) is the $j$-truncation of the algorithmic resolution of $B^{(0)}$).  Looking at the functions $\o_p$ corresponding to the sequence (2), we distinguish two cases: 
(I) max $(\o _{j})=0$, (II) max $(\o _{j})>0$. In case (I), $B_j$ is premonomial. We declare 
 condition $\cE _j$ valid if $B_j$ is monomial and we take the corresponding center $C_j$ to be the canonical center in this case (see \ref{V:det4.0}.)  
 
 In situation (II), looking at the 
 functions $t_p$ corresponding to the sequence (2), and letting $M:={\mathrm {Max}}(t_j)$ and $d=\dim(B)$, we distinguish the following two possibilities: ($\alpha$) $\dim M <d-1$, ($\beta$) $\dim M =d-1$.
 
Consider ($\alpha$) first. This is the situation where to define the $j$-th  center $C^{(0)}_{0}$ in the algorithmic resolution of $B^{(0)}$  we use induction on the dimension (via local associated homogenized nice basic objects, inductive hypersurfaces, inductive objects). 
Take the index $q$ such that  
  ${\mathrm {max}(t _{q})}= {\mathrm {max}(t _{q+1})} = \cdots = {\mathrm {max}  (t _{j})}$ 
but ${\mathrm {max}(t _{q-1})}> {\mathrm {max} (t _{q})}$. 
 To declare condition $\cE j$ valid, first we require that conditions $\cE _s$ hold, for $s < j$. In particular, 
  condition $\cE _q$ is valid.
  Take an open cover $\{V_{iq}\}$ of ${C^{(0)}}_q$ (teh $q$-th algorithmic center in (1) of \ref{V:ae1.0}) such that on each $V_{iq}$ we have a nice $A$-basic object $B_{iq}'':={({B_q}_{|U_i})''}$ with adapted hypersurface $Z_{iq} \subset V_{iq}$ (i.e., $V_{iq}$ is amenable, see \ref{V:an1.0}). From  
 dim($M$)$<d-1$ we see that   $Z_{iq}$ is inductive (i.e., (A3) of \ref{V:is2.0} holds, see \ref{V:is7.0}) Next we take, for each $i$, the homogeneous $A$-basic object 
 $(HB_{iq}''):={\cH}({B_{iq}''})$  
  (again nice), admitting $Z_{iq}$ as an inductive hypersurface. Consider next, for each index $i$, the inductive object $B^* _{iq}:=(HB_{iq}'')_{Z_{iq}}$ and 
the algorithmic equiresolution sequence 
$$ (3) \quad B^*_{iq} \leftarrow B^*_{i q+1} \leftarrow \cdots \leftarrow B^*_{ij} \leftarrow \cdots $$
(obtained by the validity of condition $\cE _q$) with centers 
$C_{ip} \subset us(B^*_p)$, $q \leq p \leq j$. (It reaches at least level $j$ because of the equalities 
${\rm max}(t_q) = \cdots = {\rm max}(t_j)$). Finally, we  say that $B$ satisfies condition $\cE _j$ if 
 the sequence (3) is strongly permissible (\ref{V:is10.0}), for all $i$.
 Then there is an induced permissible sequence 
{$(HB_{iq}'') \leftarrow \cdots \leftarrow (HB_{ij}'') $}. Let $\widetilde{B_{iq}}$ denote the restriction of $B_q$ to $ V_{iq}$. By the W-equivalence of $\widetilde{B_{iq}}$ and $(HB_{iq}'')$ and 
 repeated application of \ref{P:proB"}, 
we obtain (by using the same centers) an induced $t$-permissible sequence
$$(4) \quad \widetilde{B_{iq} } \leftarrow \widetilde{B_{i,q+1} } \leftarrow \cdots \leftarrow \widetilde{B_{ij} }$$
From the validity of $\cE_p$ for $p < j$, by induction we may assume that by varying $i$ the centers $C_{ip}$ glue together, to produce globally defined centers $C_p$, $q \leq p < j$, so that the resulting sequence coincides with (1). More precisely, we obtain $C_q \subset us(B_q)$, next $C_{q+1} \subset us(B_{q+1})$, where $B_{q+1}=\cT (B_q,C_q)$, and so on. Finally, we have 
$B_j = \cT(B_{j-1},C_{j-1})$. 
In this way, 
$us(\widetilde{B_j}) $ any be identified to an open $V_{ij} \subset us(B_j)$, the center 
$C_{ij}$ of $B^*_{ij}$ to a closed subscheme of $V_{ij} \subset us(B_j)$. 
 Arguing as in \ref{V:ae3.0}(II), using 
 Lemma \ref{L:pego} (now in case $\lambda=j$) we see  that these agree on intersections. By the strong permissibility condition on the sequence (3), we obtain a $t$-permissible center $C_j$ of $B_j$. This $C_j$ is the center 
$\cE _j$ attaches to $B_j$.
 Again by Lemma \ref{L:pego}, this center is independent of the choice of the cover and the adapted surfaces selected, so $C$ is well defined. 
 
 Situation $(\beta)$ is treated exactly as case ``$\dim M=d-1$ '' in \ref{V:ae3.0}. 
 
 In both cases $\alpha$ and $\beta$ property (a) of \ref{V:ae1.0} follows from the fact that our construction over $A$ induces on the fiber the appropriate algorithmic center in the resolution (1) of \ref{V:ae1.0}, (b) of \ref{V:ae1.0} is immediate and (c), (d) are easily verified by using the remarks of \ref{V:ae2.1}.
 
So, we have completed the definition of conditions $\cE _j$, and the proof of Theorem \ref{T:main} (see \ref{V:prova}) is complete.  
 \end{voi}
 \begin{exa}
 \label{E:unosolo}
 A basic object over $A$ may have many equiresolutions, but only one of these can be the algorithmic equiresolution. Consider the following example. Here, $k$ denotes a characteristic zero field, $A=k[\epsilon]$, where ${\epsilon}^2=0$, $R=A[x,y]$ ($x,y$  indeterminates), $S=\Spec (A)$, $W=\Spec(R)$, $W \to S$ is induced by the natural homomorphism $A \to R$. Let $B=(W \to S, (y^2,x^3),2, \emptyset)$. Then 
 $B^{(0)}=(\Spec (k[x,y],(y^2,x^3),2, \emptyset)$ and Sing($B^{(0)}$)=$V(x,y)$ (the origin). If we blow-up the origin $0$ of $\Spec (k[x,y])$ the resulting transform $B^{(0)}_1$ will satisfy Sing($B^{(0)}_1$)=$\emptyset$, and clearly this is the algorithmic resolution of 
 $B^{(0)}$. Concerning $B$, if (for $\lambda \in k$) $C_{\lambda} \subset W$ is the subscheme defined by the ideal $I(C_{\l}):=(y,x-{\lambda}{\epsilon}) \subset R$, then $C_{\l}$ is a permissible center. Indeed, if we let 
 $x':=x-{\l}{\epsilon}$, then 
 $I=(y^2,{x'}^3+3{\l}{\epsilon}{x'}^2)$ and   $I(C_{\l})=(y,x')$. Now we immediately see that 
 $\nu (I,C)=\nu (I^{(0)},C^{(0)})=2$. Clearly $C_{\l}$ induces the origin $0$ on $B^{(0)}$, for all $\l$. If $B_{1{\l}}$ is the transform $B$ with center $C_{\l}$, then $\sg(B_{1{\l}})$ is nonsingular, for all $\l$. In fact, its restriction to the non trivial affine open of the blowing-up is 
 $(\Spec (A[x',y] \to S,(y^2,x'+3{\lambda}{\epsilon}),2,H)$ (with $H={\mathcal V}(x')$), and we see that 
 $\sg (B_{1{\l}})={\emptyset}$. Thus, for any $\l$ we obtain an equiresolution of $B$. \\
 For what values of $\l$ will this be algorithmic, that is will condition $\cE _{0}$ hold? We are in the case where (in the notation of \ref{V:ae3.0}) we have codim($M) >1$, so we have to use the inductive object. 
 Note that $B$ is a nice $A$-basic object, $Z$ defined by the ideal $(y)A[x,y]$ is an inductive  hypersurface. Let us study condition ${\cE}_{0}$ using $Z$. Here, $\Delta(I/S)=(y,x^2)$, $\cH(I/S)=(y^2,x^2y,x^3)$, 
 $\cC(\cH(I/S),Z)=(x^3)A[x]$. Hence, $B_Z=(\Spec (A[x] \to S,(x^3),2,{\emptyset})$.
By induction, to have condition $\cE _{0}$ satisfied, we need a permissible center for $B_Z$. To study permissiblity for $C_{\l}$ (defined by $I(C_{\l})=(x-{\l}{\epsilon}))$, note that  $(x-{\l}{\epsilon})^3=(x^3-3{\l}{\epsilon}x^2)$. From this fact we  see
 that for ${\l}\not=0$ we get 
 $2 = \nu (J,C_{\l})<\nu (J^{(0)},{C_{\l} ^{(0)})}=3$, so that the center $C_{\l}$ is not permissible, while for ${\l} =0$ we get 
  $\nu (J,C_{0})=\nu (J^{(0)},{C_{0} ^{(0)})} =3$, so that $C_0$ is permissible. 
So, this way the only  algorithmic center we get is  $C_0$, in agreement with the general theory
 \end{exa}
\begin{voi}
\label{V:ae5.0}
{\it Proof of Lemma \ref{L:pego}}. We proceed by induction on $t$. Let $t=0$, i.e., we have a single $A$-basic object $(HB)$. Let $C$ (resp. $C'$) be the algorithmic center of 
$(HB)_{Z}$ (resp. $(HB)_{Z'}$). Firstly, we claim that $C^{(0)}=C'^{(0)}$ (equality of fibers). Indeed, by (a) in \ref{V:ae1.0}, these are the algorithmic zeroth centers of the fibers 
$(HB^{(0)})_{Z^{(0)}}$ and $(HB^{(0)})_{Z'^{(0)}}$ respectively. But  both centers, regarded as subschemes of $W^{(0)}:=us(HB^{(0)})$
 must be the (unique) algorithmic zeroth center of $(HB^{(0)})$ (by the construction when the base is a field). Hence they agree: $C^{(0)}=C'^{(0)}$. \\
 Since the set of closed points is dense, to show the equality $C=C'$ it suffices to show that if $y$ is any closed point in $C^{(0)}={C'}^{(0)}$, then  
  the restrictions of $C$ and $C'$ to a neighborhood of $y$ coincide. \\ 
 By using \ref{T:etalez} we find etale morphisms $\pi, {\pi}'$ from a scheme $Y$ into $Z$ and $Z'$ respectively, such that 
 $\pi ^{*} ((HB)_Z) = {\pi'}^{*} ((HB)_{Z'})$. By (c) of \ref{V:ae1.0}, both 
${\pi}^{-1}(C)$ and 
  $\pi '^{-1}(C')$ must be the algorithmic zeroth center that condition $\cE_0$ assigns to 
  ${\pi} ^*((HB)_Z))$ and ${\pi'} ^*((HB)_{Z'})$ respectively. Since these $A$-basic objects are equal, ${\pi}^{-1}(C)={\pi '}^{-1}(C')$, whence $C=C'$ near $y$, as claimed.\\
  The inductive step  (transition from $t$ to $t+1$) is accomplished with an argument similar to that used for $t=0$, applying the inductive hypothesis and again  Theorem \ref{T:etalez}.
 \end{voi}
 \begin{voi}
 \label{V:ae5.1}
 {\it On generalized basic objects}. Initially, we intended to use strictly  the resolution process for basic objects (over a field) discussed in \cite{EV} or \cite{BEV}. In it there is a fundamental inductive step, where one replaces a basic object $B$ first by a nice object $B''$ (locally defined) and then this one by an object $B_Z$ with underlying scheme $Z$, a regular hypersurface defined on $us(B'')$. By induction we have a resolution function for $B_Z$, which allows us to define (locally) a resolution function for $B$. Alternately, we have an algorithmic permissible center for $B_Z$ which produces (locally) an algorithmic center for $B$. But this construction is local and moreover, given $B''$, there are many adapted hypersurfaces $Z$, so there is a problem of patching, if we try to get a globally defined resolution function (or algorithmic permissible resolution center). (Similar considerations apply to a basic  object $B_j$ which appears in a suitable permissible sequence of basic objects, see \ref{V:is8.0}). 
  In these references to solve this problem the authors 
  use  {\it generalized basic objects}. In a suitable sense these are, locally, basic objects, but it is possible to define for them global permissible centers, globally defined $t$-functions (see \ref{V:ag7}), and so on. Once this is verified, the mentioned patching problem is easily solved. Key results to implement this approach are: (a) Hironaka's trick (see \cite{BEV}, section 21) (b) the fact that formula (1) of  \ref{V:is8.0} holds or (essentially equivalently), (c) that a center $C$ is permissible for $B$ if and only if it is permissible for $B_Z$. As we saw, the statement analogous to (c) working over $A \in \cA$ rather than a field is not true (see \ref{E:accidenti} and \ref{E:accidento}). Hence it does not seem possible to adapt this approach to the situation where we work over an artinian ring, necessary for  Theorem \ref{T:main}. This difficulty was overcome 
   by abandoning generalized basic objects and using some techniques from \cite {W} instead. This leads to the VW-resolution algorithm, that we have used. However, many aspects of the theory of generalized basic objects, including Hironaka's trick,  can be adapted to the case of objects over $A \in \cA$. Since we do not make use of these results, we do not present the details here.
 \end{voi}

  \section{Families of ideals and varieties}
\label{S:FV} 
  
  It is known that, working over a field, our algorithm of resolution for basic objects induces algorithms for principalization of ideals and resolution of embedded varieties (see \cite{BEV}).  In this section we present a brief review of these facts and a description of an analogue  when we work over an Artin ring. Throughout, $A$ will denote a ring in the class $\cA$ of \ref{V:bn1.0}, $S:=\Spec (A)$.
 \begin{voi}
 \label{V:ideals} {\it Families of ideals}. Given an Artin ring $A \in {\mathcal A}$, an {\it  idealistic triple} (or {\it id-triple}) over $A$ is a 3-tuple  
 ${\mathcal T}=(p: W \to S,I,E)$,  
   $S= \Spec (A)$, such that $(W \to S,E)$ is an $S$-pair (\ref{V:bn5.0}) and $I$ is a never-zero $W$-ideal (\ref{V:ag1}). 
   
   There is natural notion of  (closed) fiber, by reducing modulo $r(A)$. This fiber is an id-triple over $k=A/r(A)$,  ${\mathcal T}^{(0)}=(W^{(0)} \to {\Spec k},I^{(0)},E^{(0)})$. We might call an id-triple over $A \in {\mathcal A}$ a {\it family of triples (over fields) parametrized by $S$} or an {\it infinitesimal deformation} of the triple ${\mathcal T}^{(0)}$. If $A=k$ is a field, often we shall write 
   $(W,I,E):= (W \to {\Spec k},I,E)$.
   
   Given and id-triple  ${\mathcal T}=(W\to S,I,E)$ and a subscheme $C$ of $W$ which is a permissible center for the underlying $S$-pair  $(W\to S,E)$ (\ref{V:bn5.0}), we may define the {\it transform} of $\mathcal T$ with center $C$. This is the id-triple 
    ${\mathcal T}_1=(W_1 \to S,I'_1,E_1)$, where $(W_1 \to S,E_1)$ is the transform of the $S$-pair $(W \to S, E)$ with center $C$ (\ref{V:bn9.1}) 
 and $I'_1=I{\cO}_{W_1}$ (the total transform of $I$ to $W_1$, \ref{V:bo2.0}). 
 
 An $A$-resolution, or {\it equiprincipalization} of $\mathcal T$ is a sequence 
 ${\mathcal T}={\mathcal T}_0 \leftarrow {\mathcal T}_1{\leftarrow  \cdots \leftarrow \mathcal T}_r$ of id-triples, with  
${\mathcal T}_i=(W_i \to S,I_i,E_i)$, ${\mathcal T}_{i+1}$ the transform of 
${\mathcal T}_i$ with a permissible center, such that  
$I'_r=I(H_1)^{c_1} \ldots I(H_s)^{c_s}$ (where $E_r=(H_1, \ldots , E_s$)), for suitable (locally constant) integral exponents $c_i \ge 0$, $i=i, \ldots, s$. By taking fibers, such an $A$-principalization induces a principalization sequence for ${\mathcal T}^{(0)}$ (see \cite{BEV}, Theorem 2.5). 
  
 It is known that when the base is a field $k$ (of characteristic zero) our algorithm of resolution for basic objects induces an algorithm for {\it principalization} of id-triples. Namely, given the id-triple 
 $\mathcal T=(W,I,E)$ over a field $k$, one considers the basic object 
 $B_0= (W,I,1,E)$ and applies the algorithm to $B_0$, getting a resolution 
 $B=B_0 \leftarrow \cdots \leftarrow B_r$, $B_i=(W_i,I_i,1,E_i)$. Just by dropping the entry $b=1$ in each basic object $B_i$, we get the desired principalization. See \cite{BEV}, Parts I and II, for details. Henceforth this principalization process will be referred to as {\it the algorithmic principalization  of} $\mathcal T=(W,I,E)$.
 
 Now, relative to the VW-algorithm, we introduce a 
 notion of {\it algorithmic equiresolution}, or {\it equiprincipalization}, for id-triples over $A \in {\cA}$. Namely, given such an id-triple 
 $T=(W\to S,I,E)$, with fiber  $\mathcal T^{(0)}=(W^{(0)},I^{(0)},E^{(0)})$, we 
  say that $\mathcal T$ is {\it algorithmically equiprincipalizable} if conditions ${\mathcal E}_0, \ldots, {\cE}_{r-1}$ are valid for the basic object $B=(W \to S, I,1,E)$. Alternatively, we could demand that conditions   ${\mathcal E}_j$ be valid for all possible $j$, since they are vacuously hold for $j \ge r$. So, if $\mathcal T$ is  equiprincipalizable, we have an algorithmic equiresolution  
 $B=B_0 \leftarrow \cdots \leftarrow B_r$, $B_i=(W_i,I_i,1,E_i)$, of $B$.
 As before (when we worked over a field), by dropping throughout the entry $b=1$, we obtain an $A$-principalization of $\mathcal T$ inducing by taking fibers the algorithmic principalization of ${\cT}^{(0)}$. This will be called the {\it algorithmic equiprincipalization of $\mathcal T$}.
 
 It is easy to state and prove a theorem analogous to \ref{T:main} for $A$-triples, $A \in \cA$. We leave this task to the reader.
 
 We could have defined the notion of {\it family of ideals parametrized by $\Spec (A)$, $A \in \cA$} as a pair $(p:W \to S,I)$, with $p$ and $I$ as above, and essentially repeat what was done above. But since in an $A$-resolution process the exceptional divisors that appear must be considered, it seems more reasonable to work from the outset with id-triples instead. 
 \end{voi}

\begin{voi}
\label{V:var1.0}
Working over a characteristic zero field $k$, 
 a pair $\mathcal X = (X, W)$ where $W$ is a scheme, smooth over $k$, and $X$ is a reduced, equidimensional subscheme of $W$ will be called  an {\it embedded variety }. 
 
 A {\it resolution} of $\cX$ is a proper, birational morphism $f:W' \to W$, with $W'$ smooth, such that: 
   (i) the exceptional locus of $f$ is the  union of regular hypersurfaces 
 $H_1, \ldots, H_n$ with normal crossings, (ii) the strict transform $X'$ of $X$ to $W'$ is regular, and has normal crossings with $H_1, \ldots, H_n$, (iii) $f$ induces an isomorphism $X'- f^{-1}(\Sigma) \to X- {\Sigma}$, where $\Sigma$
 is the singular locus of $X$.
 
 As explained in sections (2.4) and (5.8) of \cite{BEV} , our algorithm for resolution of basic objects induces an algorithm for resolution of embedded varieties. Indeed, consider the basic object $B=(W,I(X),1,\emptyset)$ and its corresponding algorithmic resolution:
 $$B=B_0 \leftarrow {B}_1 \leftarrow \cdots \leftarrow {B}_r           $$
 obtained via resolution functions $g_0, \ldots, g_{r-1}$, taking values in a totally ordered set ${\Lambda}^{(d)}$, $d=\dim(W)$ (see \ref{V:ag7}). We write 
 $B_i = (W_i, I_i, 1, E_i)$, for all $i$.  A property of our algorithm says that  $g_0$ is constant, say = $a \in {\Lambda}^{(d)}$ on $W \setminus \sg (X)$ and there is a unique index $n$ (depending on $B$, hence on $\cX$) such that max($g_n$)=$a$.  It turns out that the strict transform $X_n$ of $X$ to $W_n$ is a union of components of ${\mathrm {Max}}(g_n)$ (i.e., the $n$-th center in the algorithmic resolution process), hence it is regular, having normal crossings with $E_n$.  
 We shall denote the index $n$ above by $\eta({\cX})$. 
 \end{voi}

\begin{voi}
\label{V:var2} {\it Relative $A$-varieties and their resolutions}.  If $A \in \cA$ (\ref{V:bn1.0}), a scheme $X$ together with a  flat morphism of finite type 
$f:X \to S=\Spec A$ will be called a {\it relative $A$-scheme}. The morphism won't be specified when it is clear from the context. If the (only) fiber 
$X^{(0)}$ is reduced and equidimensional, we shall talk about a {\it relative  $A$-variety}. In that case, if $U$ is the open set of points where 
$X^{(0)}$ is smooth over $k=A/r(A)$, the induced morphism $X_{|U} \to S$  is smooth over is smooth (because it is flat, with smooth fiber).  Note that $X_{|U}$ is a Cohen-Macaulay scheme (use 21.C, page 154, in \cite{Mm}). Let us write $\cS (X/S) := X_{|U}$.

A {\it resolution} of an  a relative   $A$-variety  $f:X \to S$ is a proper morphism $\phi : X' \to X$ such that: (i) $f \phi : X' \to S$ is smooth, (ii) $\phi$ induces a resolution of fibers morphism ${\phi}^{(0)}:{{X'}^{(0)} \to X^{(0)}}$ (i.e., ${\phi}^{(0)}$ is proper, birational and an isomorphism off the singular locus of $X^{(0)}$). 
\end{voi}
 
 \begin{voi}
 \label{V:var2.0}
 {\it Relative embedded $A$-schemes and varieties}. 
  As usually, $A$ is an Artin ring in $\cA$, $S = \Spec (A)$. 
  
  (a) A {\it relative embedded $A$-scheme} (or an embedded scheme, flat over $S$) is a pair $\cX = {(X, p:W \to S)}$,  with $p$ smooth, $X$ a closed subscheme of $W$, such that 
    the  morphism $q: X \to S$ induced by $p$ is flat. 
  
  If in addition $X^{(0)}$ is reduced and equidimensional, we talk about a {\it relative embedded $A$-variety}. In this case 
   $(X^{(0)}, W^{(0)})$ is an embedded variety over $k=A/r(A)$.

 (b) We say that an the relative embedded $A$-variety $\cX$ is {\it equisolvable} if there is a proper morphism $\psi : W' \to W$ such that: 
 (i) the composition $p \psi : W' \to S$ is smooth, (ii) $\psi $ induces a proper birational morphism of fibers ${W'}^{(0)} \to W^{(0)}$, (iii) If ${\tilde X} = {\cS (X/A)}$ (the largest open subscheme of $X$ smooth over $S$, see \ref{V:var2}) and $X'$ is the scheme-theoretic closure of ${\psi}^{-1}({\tilde X})$ in $W'$, then the induced morphism 
 $X' \to X$ is a resolution of the relative $A$-variety $q:X \to S$. 
 
 We have used the term {\it relative}  
 by analogy with the well established terminology concerning divisors flat over a base scheme, see \cite{Mu}, page 72. 
 \end{voi}

\begin{thm}
 \label{T:emva} Let $\cX = {(X, p:W \to S)}$ be a relative $A$-embedded variety, 
  $\cX ^{(0)} = (X^{(0)}, W^{(0)})$ its fiber. Assume  $B:=(p:W \to S,I(X),1,\emptyset)$, 
   satisfies conditions ${\cE}_0, \ldots,{\cE}_{q}$, where $q=\eta({\cX}^{(0)})$ (\ref{V:ae6.0}). Then, 
   $\cX$ is equisolvable in the sense of \ref{V:var2.0} (b). 
  \end{thm}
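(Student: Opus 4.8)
The plan is to run the partial algorithmic equiresolution machinery of Theorem~\ref{T:main} on the basic object $B=(p:W\to S,I(X),1,\emptyset)$ for exactly the steps permitted by the hypotheses $\cE_0,\dots,\cE_q$, to take $W'$ to be the ambient scheme at the stage where the strict transform of the closed fibre has become smooth, and to identify the scheme $X'$ of \ref{V:var2.0}(b) with a union of connected components of the last $A$-permissible centre. First I would unwind the hypotheses: validity of $\cE_0,\dots,\cE_{q-1}$ yields (\ref{V:ae1.0}, sequence $(2)_q$) an $A$-permissible sequence $B=B_0\leftarrow\cdots\leftarrow B_q$, $B_i=(W_i\to S,I_i,1,E_i)$, with centres $C_0,\dots,C_{q-1}$, and $\cE_q$ yields a further $A$-permissible centre $C_q\subset W_q$. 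By property (a) of \ref{V:ae1.0} the induced sequence of closed fibres is the $q$-truncation of the algorithmic resolution of $B^{(0)}=(W^{(0)},I(X^{(0)}),1,\emptyset)$; since $q=\eta(\cX^{(0)})$, the discussion of \ref{V:var1.0} applies, so the strict transform $X^{(0)}_q$ of $X^{(0)}$ to $W^{(0)}_q$ is regular, has normal crossings with $E^{(0)}_q$, is a union of irreducible components of $C_q^{(0)}=\mathrm{Max}(g_q)$, and $W^{(0)}_q\to W^{(0)}$ restricts on it to the algorithmic resolution of the embedded variety $\cX^{(0)}$.

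Set $W':=W_q$ and let $\psi:W'\to W$ be the composite blowing-up. Then $p\psi:W'\to S$ is smooth by iterating \ref{V:bn4.0}, and $\psi^{(0)}:{W'}^{(0)}\to W^{(0)}$ is proper and birational since fibrewise it is the composite of the blowing-ups of the $C_i^{(0)}$, each a proper (indeed nowhere dense) closed subscheme; moreover $\psi^{(0)}$ is an isomorphism over $W^{(0)}\setminus\mathrm{Sing}(X^{(0)})$, as this holds for the algorithmic resolution of $\cX^{(0)}$. This gives conditions (i) and (ii) of \ref{V:var2.0}(b). Now let $\widetilde X:=\cS(X/A)$ and let $X'$ be the scheme-theoretic closure of $\psi^{-1}(\widetilde X)$ in $W'$; I must show $\psi\colon X'\to X$ is a resolution of the relative $A$-variety $q:X\to S$, i.e. (cf.\ \ref{V:var2}) that $X'\to S$ is smooth and ${X'}^{(0)}\to X^{(0)}$ is proper, birational, and an isomorphism off $\mathrm{Sing}(X^{(0)})$.

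The heart of the argument is the identification $X'=C_q'$, where $C_q'$ is the union of those connected components of $C_q$ whose (reduced) closed fibres are the irreducible components of $X^{(0)}_q$. Since $C_q$ is an $A$-permissible centre it is smooth over $S$ (\ref{V:bn5.0}, \ref{V:bn4.0}); as $C_q$ and $C_q^{(0)}$ share their underlying space and $C_q^{(0)}$ is regular, $C_q'$ is open and closed in $C_q$, hence $S$-smooth, with ${C_q'}^{(0)}=X^{(0)}_q$. On one hand $\psi^{-1}(\widetilde X)$ is $S$-flat: off the preimages of the $C_i$ it is an open subscheme of the $S$-smooth $\widetilde X$, and on fibres those centres are disjoint from $\widetilde X^{(0)}$ because $\mathrm{Max}(g_0)\subseteq\mathrm{Sing}(X^{(0)})$ and $\psi^{(0)}$ is an isomorphism over the smooth locus; its closed fibre is $(\psi^{(0)})^{-1}(\widetilde X^{(0)})$, an open subscheme of $X^{(0)}_q={C_q'}^{(0)}$. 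Comparing fibres of $S$-flat subschemes of $W'$ gives $\psi^{-1}(\widetilde X)\subseteq C_q'$. On the other hand $(\psi^{(0)})^{-1}(\widetilde X^{(0)})$ is dense in $X^{(0)}_q$, so $\psi^{-1}(\widetilde X)$ is schematically dense in the $S$-flat scheme $C_q'$, whence its scheme-theoretic closure in $C_q'$ — equivalently in $W'$ — is all of $C_q'$. Thus $X'=C_q'$; in particular $X'\to S$ is smooth and ${X'}^{(0)}=X^{(0)}_q$, which by \ref{V:var1.0} maps to $X^{(0)}$ as a resolution. So $\psi:X'\to X$ is a resolution of $q:X\to S$, and $\psi:W'\to W$ together with this $X'$ witnesses that $\cX$ is equisolvable.

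\textbf{The main obstacle} is precisely the identification $X'=C_q'$. Because $S=\Spec A$ is non-reduced, scheme-theoretic closure does not commute with the base change $\Spec k\hookrightarrow\Spec A$, so one cannot read off ${X'}^{(0)}$ from the fibre alone; one is forced to know in advance that the relevant (iterated) strict transforms are $S$-flat, and this $S$-flatness is exactly what the extra hypothesis $\cE_q$ buys, via the $S$-smoothness of the centre $C_q$ that absorbs the strict transform of $X$. A secondary point, resting on the routine local computations repeatedly deferred in the paper (cf.\ \ref{V:bn8.0}, \ref{V:ae2.1}) together with \ref{P:bloui}, is that the formation of $\psi^{-1}(\widetilde X)$, and of strict transforms at each intermediate stage, commutes with passage to the closed fibre — again a consequence of $S$-flatness and the compatibility of blowing-ups with the change of base ring.
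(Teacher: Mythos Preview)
Your approach coincides with the paper's: run the conditions $\cE_0,\dots,\cE_q$ to obtain the permissible sequence and the extra centre $C_q$, take $W'=W_q$ with $\psi$ the composite blowing-up, and identify the closure $X'$ of $\psi^{-1}(\widetilde X)$ with the union $C_q'$ of those components $Y_i$ of $C_q$ whose fibres are the components of the strict transform $X^{(0)}_q$; smoothness of $X'\to S$ then comes for free from the $A$-permissibility of $C_q$.

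The one substantive difference is in how you justify the key identification $X'=C_q'$. Your step ``comparing fibres of $S$-flat subschemes of $W'$ gives $\psi^{-1}(\widetilde X)\subseteq C_q'$'' does not work as stated: two $S$-flat closed subschemes can have equal (or nested) closed fibres without being equal (or nested) over $A$. For instance $\mathcal V(x)$ and $\mathcal V(x-\epsilon)$ in $\Spec k[\epsilon][x]$ are both $S$-smooth with the same closed fibre but are distinct. So fibre-containment plus $S$-flatness alone does not produce the scheme-theoretic inclusion you need, and hence your schematic-density argument for the reverse inclusion has nothing to stand on. The paper's proof instead invokes the Cohen--Macaulay property of $\cS(X/A)$ (noted in \ref{V:var2}) together with remark~(b) in the proof of Proposition~\ref{P:ekiv}: each component $Y_i$ of $C_q'$ is irreducible and Cohen--Macaulay (being smooth over the Artinian, hence CM, ring $A$), so has no embedded points, and therefore the scheme-theoretic closure of any nonempty open of $Y_i$ is all of $Y_i$. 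This is the device that passes from information on the open locus where $\psi$ is an isomorphism to the global equality $cl(\psi^{-1}(\widetilde X))=\bigcup_i Y_i$, and it replaces your flatness step.
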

  \begin{proof}
  The validity of these conditions implies the existence of a permissible sequence 
 $$B=B_0 \leftarrow \cdots \leftarrow B_q \leftarrow B_{q+1}$$
 $B_i=(W_i,I(X)_i,1,E_j)$,  with centers $C_i \subset W_i$, $i=0, \ldots q$, inducing on special fibers the $(q+1)$-truncation of the algorithmic resolution sequence of the fiber $B^{(0)}=B_0^{(0)}$ (with centers $C_i^{(0)}, ~ i=0, \ldots, q$). By \ref{V:var1.0} the strict transform $X^{(0)}_q$ of $X$ to $us(B_s^{(0)})$ is the (disjoint) union of components $Y_1^{(0)}, \ldots, Y_s^{(0)}$ of the center $C_q^{(0)}$. Let $Y_i$, $i=0,\ldots,s$ be the component of the center $C_q$ inducing $Y_i^{(0)}$ on the special fiber (i.e., modulo $r(A)$) and 
 $\psi:W_q \to W_{0}$  the naturally induced morphism. Using the fact that ${\cS (X/A)}$ is a Cohen-Macaulay scheme and remark (b) in the proof of Proposition \ref{P:ekiv},  one sees that the scheme -theoretic closure of $ {\psi}^{-1}(\cS(X/A))$
 in $W_q$ is equal to $Y_1 \cup \ldots \cup Y_s:=X_q$. Since $C_q$ is smooth over $S$ (via the restriction of the natural projection $W_q \to S$)  $X_q$ is smooth over $S$ and      one readily checks  that the  morphism $\psi:W_q \to W$ defines an equiresolution of 
 $\cX$ in the sense of \ref{V:var2.0} (b). 
 \end{proof}
\begin{voi}
 \label{V:var2.1}
In the notation of Theorem \ref{T:emva}, when conditions   ${\cE}_0, \ldots,{\cE}_{q}$,  $q=\eta({\cX}^{(0)})$, hold for $B=(p:W \to S,I(X),1,\emptyset)$ we say that the relative embedded $A$-variety $\cX$ is {\it algorithmically equisolvable}, and call the equiresolution obtained in the proof of \ref{T:emva} its {\it algorithmic equiresolution}. 
 \end{voi}
 \begin{voi}
 \label{V:var3} We use the notation of \ref{T:emva} and its proof. If $\cX$ is is algorithmically equisolvable the algorithmic equiresolution of \ref{T:emva} has some additional properties, namely: (a) the  morphism $\Psi:W_q \to W$ is a composition of blowing-ups with centers smooth over $S$, (b) the exceptional divisor $D$ of $\psi$ is a union of hypersurfaces with normal crossings (\ref{V:bn4.0}), (c) $X_q$ has normal crossings with $D$.
 \end{voi}

\begin{voi}
\label{V:ae6.0}
To finish this chapter we indicate how our notions of equiresolution (for basic objects, ideals or embedded schemes), given when we work over $S=\Spec(A)$, $A \in \cA$, naturally induce similar notions when working with families parametrized by an arbitrary noetherian scheme  $T$. Consider, e.g., a {\it basic object over a scheme $T$}, that is a four-tuple $B={(p:W \to T,I,b,E)}$, with $p$ a smooth morphism, $I$ a never-zero ideal of ${\cO}_W$, $b$ a positive integer, $E$ a finite sequence of distinct hypersurfaces of $W$ with normal crossings. Let $t \in T$,  $R_{t}:={\cO}_{T,t}$ and $P_t$ the maximal ideal of $R$. Hence, for any non-negative integer $m$, $R_{t,m}:=R/{P_{t}}^{m+1} \in {\cA}$. The family $B$ induces an 
$R_{t,m}$-basic object $B_{t,m}=({p_{t,m}:W_{t,m} \to S_{t,m}},I_{t,m}, b,E_{t,m})$ ($S_{t,m}=\Spec(R_{t,m})$) where the morphism $p_{t,m}$ is obtained by base change
 (via the natural morphism $S_{t,m} \to T$), the hypersurfaces in $E_{t,m}$ come, by pull-back, from those in $E$, and $I_{t,m}=I {\cO}_{W_{t,m}}$. We say that $B$ is {\it algorithmically  equisolvable at $t$} if, for every integer $m \ge 0$, the induced family $B_{t,m}$ is an equisolvable $R_{t,m}$-basic object, in the sense of 
 \ref{D:aleq}.
  Finally we say that $B$ is  {\it algorithmically equisolvable} if it is algorithmically  equisolvable at $t \in T$, for every $t \in T$. 
 
 Similarly, essentially by substituting in our previous work (in \ref{V:ideals} and \ref{V:var2.0}) the base $S=\Spec (A)$ by an arbitrary noetherian scheme $T$, we  introduce the notions of family of id-triples and family of embedded schemes, parametrized by $T$. With the notation above, we naturally obtain induced families over $R_{t,m}$, at each point $t \in T$ and we say that a family of id-triples  is equiprincipalizable  at $t$ is the induced family over $R_{t,m}$ is so, for all $m \ge 0$.  Finally we say that it is equiprincipalizable  if it is equiprincipalizable  at every $t \in T$. 
 
 Following \cite{EV} or \cite{BEV} we say that a family of embedded schemes $(X,W \to S)$ is {\it algorithmically  equisolvable} if its associated family of id-triples $(W \to S, I(X), b, \emptyset)$ is algorithmically  equiprincipalizable. 
 
 In case the parameter space is a smooth algebraic $k$-scheme $T$ ($k$ a characteristic zero field) these notions are closely related  to those studied in 
  \cite{EN} or \cite{BEV}, section 10. This will be discussed in a subsequent article. We hope that that  the present theory will also have  applications similar to those considered in \cite{EN}, section 4 (Hilbert schemes).
\end{voi}

\section{Appendix 1: Review of useful results}
\label{S:A}
 In this appendix we collect a number of basic algebraic and geometric results that are used in the paper. Probably most of them are well known, but we prove those for which we could not find appropriate references in the literature.

\begin{voi}

\label{V:ap1.0}

(a) We shall use the notation and terminology of \ref{V:bn1.0}. Thus, 
${\mathcal A}$  denotes the collection of artinian local rings $(A,\cM)$ such that the residue field $k=A/{\cM}$ has characteristic zero. Any ring in ${\mathcal A}$ is necessarily a complete $k$-algebra, $k=A/r(A)$.

\smallskip

(b) We shall be primarily concerned with the following situation: $(A,{\cM})$ is an artinian ring in $ {\mathcal A}$, $(R,{\mathcal N})$  a local noetherian $A$-algebra, essentially of finite type (via a local homomorphism 
$A \to R$), $S=\Spec (A)$. Let  $Z=\Spec (R)$ and  
$\pi : Z \to S$ be the induced morphism.  The closed fiber is isomorphic to $\Spec (R^{(0)})$, where 
$R^{(0)}:= R/{\cM}R  $ 

\smallskip

(c) Often it will be the case that $R$ is a ${\mathcal N}$-smooth
 $A$-algebra, or 
${\mathcal N}$-smooth over $A$. This is usually defined in terms of a homomorphism lifting property (see \cite{M}, page 213, or \cite{And}, Def. 14, p. 222.) Equivalently, this means that that the morphism $\pi$ above is flat, with its closed fiber geometrically regular, see \cite{And}, Thm. 18, p. 224. Under our assumption that the field $A/{\cM}$ has zero characteric, this just means: ``the closed fiber is regular'', that is `` $R^{(0)}$ is regular''. 

\end{voi}

\begin{lem}
\label{L:regse}
With notation as in \ref{V:ap1.0} (b), assume moreover that $R^{(0)}$ is a regular ring, of dimension $n$, 
$a^{(0)}_1,\ldots, a^{(0)}_n$ is  a regular system of parameters of $R^{(0)}$, and for all $i$ let $a_i \in {\mathcal N} = r (R)$ be such that $a_i$ induces $a^{(0)}_i$ via the canonical homomorphism $R \to R^{(0)}$. Then: 
$(i)$  $a_1,\ldots, a_n$ is a regular sequence in $R$, $(ii)$ $R$ is a Cohen-Macaulay ring, 
$(iii)$ If $R$ is a ${\mathcal N}$-smooth $A$-algebra then, for any indices  
$1 \leq i_1 < \ldots < i_r \leq n$,  
$R/(a_{i_1}, \ldots, a_{i_r})R$ is a ${\mathcal N}$-smooth $A$-algebra.

\end{lem}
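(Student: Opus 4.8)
The plan is to deduce all three parts from a single ingredient: the criterion relating regular sequences and flatness for a local homomorphism of Noetherian local rings, i.e. the result of \cite{M} already used at the end of \ref{V:bn2.0}. Explicitly, for a flat local homomorphism $A\to R$ and elements $x_1,\dots,x_r$ of the maximal ideal $\mathcal N$, the sequence $x_1,\dots,x_r$ is $R$-regular and $R/(x_1,\dots,x_r)R$ is $A$-flat if (and only if) the images $\bar x_1,\dots,\bar x_r$ form a regular sequence in the closed fibre $R^{(0)}$. Combined with the remark that, $A$ being artinian, $\mathcal M R$ is a nilpotent ideal of $R$, this will suffice. (I use here that $A\to R$ is flat; this is automatic in every application, where $R=\mathcal O_{W,w}$ with $p:W\to S$ smooth, and is part of the setting of \ref{V:ap1.0}.)

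First I would record that, for any $1\le i_1<\cdots<i_r\le n$, the sequence $a^{(0)}_{i_1},\dots,a^{(0)}_{i_r}$ is regular in $R^{(0)}$: each successive quotient $R^{(0)}/(a^{(0)}_{i_1},\dots,a^{(0)}_{i_j})$ is regular local of dimension $n-j$ (a quotient of a regular local ring by part of a regular system of parameters is again regular local, in particular a domain), so the next element is a non-zero-divisor there. Applying the criterion to $A\to R$ with $\underline x=a_{i_1},\dots,a_{i_r}$ then yields at once that $a_{i_1},\dots,a_{i_r}$ is an $R$-regular sequence \emph{and} that $R/(a_{i_1},\dots,a_{i_r})R$ is flat over $A$. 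Taking $r=n$ and $i_k=k$ gives (i); the general statement will be reused for (iii).

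For (ii): by (i), $\mathcal N$ contains an $R$-regular sequence of length $n$, so $\operatorname{depth}R\ge n$. On the other hand $\dim R=n$, since $\mathcal M R$ is nilpotent and hence $\dim R=\dim(R/\mathcal M R)=\dim R^{(0)}=n$ (equivalently, flatness gives $\dim R=\dim A+\dim R^{(0)}=n$). As $\operatorname{depth}R\le\dim R$ always, $\operatorname{depth}R=\dim R=n$, i.e. $R$ is Cohen--Macaulay. For (iii): if $R$ is $\mathcal N$-smooth over $A$ then, by \ref{V:ap1.0}(c) and the characteristic-zero hypothesis, $A\to R$ is flat with $R^{(0)}$ regular. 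Put $R'=R/(a_{i_1},\dots,a_{i_r})R$; by the second paragraph $R'$ is flat over $A$, and its closed fibre $R'/\mathcal M R'=R^{(0)}/(a^{(0)}_{i_1},\dots,a^{(0)}_{i_r})R^{(0)}$ is regular, being the quotient of the regular local ring $R^{(0)}$ by part of a regular system of parameters. Thus $R'$ is flat over $A$ with regular closed fibre, hence $\mathcal N'$-smooth over $A$ by the equivalence recalled in \ref{V:ap1.0}(c).

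The only real content should lie in correctly quoting and applying the flatness/regular-sequence criterion from \cite{M}; once that is in hand, (i) is its "regular sequence" conclusion, (iii) its "flat quotient" conclusion (plus the triviality that killing part of a regular system of parameters in a regular local ring leaves a regular local ring), and (ii) is a depth-versus-dimension count whose only subtlety is using artinianness of $A$ to make $\mathcal M R$ nilpotent, so that $\dim R=\dim R^{(0)}$.
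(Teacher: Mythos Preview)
Your proof is correct and takes a cleaner route than the paper's. The paper proves (i) by a double induction: first on $n$ (reducing to the case of a single element), then on $\dim_k A$, using a socle element $\epsilon\in\mathcal M$ with $\epsilon\mathcal M=0$ and the identification $(\epsilon)R\cong R^{(0)}$ to run the argument by hand. For (iii) it obtains flatness via Artin's ``lifting relations'' criterion (the relations of a regular sequence being trivial). You instead invoke the Matsumura criterion once and read off both the regularity of the sequence and the flatness of the quotient simultaneously; this is shorter and is, in fact, exactly the alternative the paper itself points to in \ref{V:bn2.0} and \ref{V:bn3.0}. Part (ii) is handled identically in both proofs.

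One point worth making explicit, since you already noticed it: the hypothesis of \ref{V:ap1.0}(b) does \emph{not} include flatness of $R$ over $A$, and the lemma as stated is actually false without it (e.g.\ $A=k[\epsilon]/(\epsilon^2)$, $R=A[x]_{(\epsilon,x)}/(\epsilon x)$, where $x$ is a zero-divisor). The paper's proof uses flatness implicitly at the step $(\epsilon)R\cong R^{(0)}$, which fails in such examples. So your parenthetical remark that flatness is being assumed is not merely a convenience for your approach; it is a genuine missing hypothesis that both proofs require, and it is indeed satisfied in every use of the lemma in the paper.
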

\begin{proof} We show that 
$a_1,\ldots, a_n$ is a regular sequence by induction on $n$, the case $n=0$ (i.e., $A=R$) being trivial. It suffices to check that $a:=a_1$ is a regular element of $R$. In fact, if this is the case, then our hypotheses applies to $R/(a_1)$ and the images of 
$a_2,\ldots, a_n$ in this ring, and we use induction.

We shall see that $a$ is regular in 
$R$ by induction on    
$\dim (A)$ (dimension as a vector space over $k=A/{\cM}$). The case where $\dim (A) = 0$ is trivial. For the inductive step, we shall use the well-known fact that the maximal ideal ${\cM}$ contains an element $\epsilon \not= 0$ such that $\epsilon \, {\cM} = 0$. Let $A' := A/(\epsilon) A$, 
${\phi} = R \to R' := R/(\epsilon) R$ be the canonical homomorphism and $b$ the image of $a_1$ in $R'$. By induction assumption $b$ is a regular element. Let $\alpha$ be an element of $R$ such that $a \, {\alpha} = 0$, ${\alpha}'$ its image in $R'$. Then, 
$b {\alpha}' = 0$, which (by the regularity of $b$) implies that ${\alpha}'=0$. Hence, ${\alpha} \in K:={\rm Ker} \, (\phi) = (\epsilon)\, R$. Note that, by the property of the element $\epsilon$, 
$K=(\epsilon)R$ is naturally a $k$-module, as such isomorphic to $R^{(0)}$. By this isomorphism, $\alpha \in K$ corresponds to $c \in R^{(0)}$, and 
$a {\alpha}$ to $a^{(0)}_1 \, c$. Since $a^{(0)}_1 \not= 0$ in the integral domain $R^{(0)}$ (because $a^{(0)}_1$ is part of a regular system of parameters), $c=0$, hence $\alpha = 0$, as desired. This proves $(i)$

($ii$) The fact that $R$ is Cohen Macaulay follows from the equalities ${\rm dim} (R) = {\rm dim} (R^{(0)})=n$ and the existence of the regular sequence $a_1, \dots, a_n$, that we just verified. 

($iii$) Concerning the smoothness, to begin with $R$ is $A$-flat by the "lifting relations criterion" (\cite{Ar} p. 11, the proof presented there, for polynomial rings, works more generally and yields the result we need here.) In fact, we are dealing with regular sequences, whose relations are trivial. Next, the only closed fiber is a regular variety, since 
$a^{(0)}_1,\ldots, a^{(0)}_n$ 
was a regular system of parameters in the regular local ring $R^{(0)}$,  
which gives us smoothness.    
\end{proof}

\begin{pro}
 \label{P:divi}
 Let $f:X \to Y$ be a morphism of noetherian schemes, $D \subset X$ an effective Cartier divisor (which we identify to a closed subscheme of $X$ locally defined by a non-zero divisor), $U=X - D$, $f':D \to Y$ and $f_U: U \to Y$ the morphisms induced by $f$ by restriction. Assume both $f'$ and $f_U$ are smooth. Then, $f$ is smooth (i.e., flat with geometrically regular fibers.).
\end{pro}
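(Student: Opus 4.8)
The plan is to argue locally at the points of $D$. Away from $D$ the morphism $f$ restricts to $f_U$ on the open set $U = X\setminus D$, which is smooth by hypothesis, so $f$ is already smooth there and there is nothing to prove. Fix then a point $x\in D$, put $y=f(x)$, and write $A=\cO_{Y,y}$, $B=\cO_{X,x}$, both noetherian local rings with $r(A)B\subseteq r(B)$. Since $D$ is an effective Cartier divisor, near $x$ it is cut out by a single element $t\in r(B)$ which is a non-zero divisor of $B$, and $B/tB=\cO_{D,x}$. Recall that in the present statement ``smooth'' means precisely ``flat with geometrically regular fibres'' (cf. \ref{V:ap1.0}(c)); so the smoothness of $f'$ says exactly that $B/tB$ is $A$-flat with geometrically regular closed fibre $\bar B/\bar t\bar B := (B/tB)\otimes_A k(y)$, and that is all the input we shall need.

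First I would prove that $B$ is $A$-flat. Tensoring the exact sequence $0\to B\xrightarrow{\,t\,}B\to B/tB\to 0$ with $k:=A/r(A)$ over $A$ and using that $B/tB$ is $A$-flat (so $\mathrm{Tor}_1^A(k,B/tB)=0$), the long exact $\mathrm{Tor}$ sequence shows that multiplication by $t$ is surjective on $\mathrm{Tor}_1^A(k,B)$. This $\mathrm{Tor}$ group is a finitely generated $B$-module (resolve $k$ by finite free $A$-modules, $A$ being noetherian), and $t\in r(B)$, so Nakayama's Lemma forces $\mathrm{Tor}_1^A(k,B)=0$. As $B$ is a noetherian local $A$-algebra with $r(A)B\subseteq r(B)$, it is ideally separated over $A$, so the local criterion of flatness applies and yields that $B$ is $A$-flat.

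Next I would check geometric regularity of the fibre through $x$. Base-changing the same exact sequence along $A\to k(y)$ (legitimate since $B/tB$ is $A$-flat, so $\mathrm{Tor}_1^A(B/tB,k(y))=0$) shows that the image $\bar t$ of $t$ is a non-zero divisor in $\bar B:=B\otimes_A k(y)$, and $\bar B/\bar t\bar B=\cO_{D_y,x}$ is regular because $f'$ is smooth. Now a noetherian local ring possessing a non-zero divisor in its maximal ideal whose quotient is regular is itself regular: the Krull dimension drops by exactly one because $\bar t$ lies in no minimal prime, while the embedding dimension drops by at most one, and regularity of $\bar B/\bar t\bar B$ then forces $\operatorname{embdim}\bar B\le\dim\bar B$. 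Hence $\bar B$ is regular, i.e. the fibre $X_y$ is regular at $x$. Running this same computation after replacing $k(y)$ by an arbitrary field extension $L$ establishes geometric regularity: the hypotheses survive such a base change because flatness of $B/tB$ over $A$ keeps $t$ a non-zero divisor (hence $D$ a Cartier divisor) and smoothness of $f'$ is stable under base change, so the fibre of $D$ over the new point is still regular. Together with the flatness proved above, this shows $f$ is smooth at $x$, completing the proof.

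The step needing the most care is the base-change bookkeeping: because nothing is assumed to be locally of finite type over $Y$, one cannot simply invoke the fibrewise flatness criterion of EGA, which is why I would run the hands-on $\mathrm{Tor}$/Nakayama argument; and one must verify at each base change ($A\to k(y)$ and $k(y)\to L$) that $t$ remains a non-zero divisor, which is exactly where the flatness of $B/tB$ over $A$ enters. Everything else is routine.
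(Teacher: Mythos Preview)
Your proof is correct. For the regularity of the fibres you argue exactly as the paper does: the paper isolates the step ``a noetherian local ring with a non-zero-divisor in its maximal ideal whose quotient is regular is itself regular'' as a separate lemma (Lemma~\ref{L:inhreg}) and applies it pointwise on the geometric fibre, which is precisely your embedding-dimension count.

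The one genuine difference is the flatness step. The paper does not argue directly: it simply cites the author's earlier note \cite{N} for the statement that flatness of $f'$ and $f_U$ together imply flatness of $f$. Your Tor/Nakayama argument is a self-contained replacement for that citation, and in fact it is sharper at points of $D$: you use only the flatness of $B/tB$ over $A$ (i.e.\ only the flatness of $f'$) to force $\mathrm{Tor}_1^A(k,B)=0$, whereas the cited result uses both hypotheses globally. The trade-off is that the paper's route is a one-line appeal to a known result, while yours makes explicit why the local criterion of flatness fires (ideal separation via $r(A)B\subseteq r(B)$ and Krull's intersection theorem). Both are perfectly adequate here.
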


\begin{proof}. In \cite{N}
 it is shown that the flatness of $f'$ and $f_U$ implies that $f$ is flat. Let us check that all the geometric fibers are regular. So, consider such a fiber $X_y$, $y$ a geometric point of $Y$. Here, with obvious notation, $D \cap X_y \subset X_y$ is a Cartier divisor, let $U_y = X_y -D$. Note that $D \cap X_y $ (resp. $U_y$) can be identified to the fiber of $f'$ (resp. $f_U$) at $y$, hence is regular. Then the regularity of $X_y$ is a consequence of the following lemma, finishing the proof.
\end{proof}

\begin{lem}
\label{L:inhreg} Let $(R, \mathcal N)$ be a noetherian local ring, $a \in \mathcal N$ a regular  element of $R$ (i.e., a non-zero divisor), suppose $R/(a)$ is a regular ring. Then, $R$ is regular.
\end{lem}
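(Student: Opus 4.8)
The plan is the standard comparison of Krull dimension and embedding dimension of $R$ with those of $\bar R := R/(a)$. Write $d = \dim R$, let $k = R/\mathcal N$ be the residue field, and let $\bar{\mathcal N} = \mathcal N/(a)$ be the maximal ideal of $\bar R$. The hypothesis is that $\bar R$ is regular, i.e. $\dim_k \bar{\mathcal N}/\bar{\mathcal N}^2 = \dim \bar R$; the goal is $\dim_k \mathcal N/\mathcal N^2 = d$.

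First I would pin down $\dim \bar R$. Since $a$ is a non-zero divisor of the noetherian local ring $R$, it lies in no associated prime of $R$, in particular in no minimal prime; combined with $a \in \mathcal N$ this forces $\dim \bar R = d - 1$ (the inequality $\dim R/(a) \ge \dim R - 1$ holds for any element of $\mathcal N$ by Krull's principal ideal theorem, and $\dim R/(a) \le \dim R - 1$ because $a$ is not contained in any minimal prime of $R$). Thus $\bar R$ is regular of dimension $d-1$, so $\bar{\mathcal N}$ is generated by $d-1$ elements; equivalently $\dim_k \bar{\mathcal N}/\bar{\mathcal N}^2 = d-1$.

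Next I would transfer this upstairs. Choosing $x_1,\dots,x_{d-1}\in\mathcal N$ whose images generate $\bar{\mathcal N}$, the elements $x_1,\dots,x_{d-1},a$ generate $\mathcal N$ by Nakayama's lemma (their images span $\mathcal N/\mathcal N^2$, since modulo $a$ the $x_i$ already span and $a$ accounts for the remaining part). Hence $\mathcal N$ is generated by at most $d$ elements, so $\dim_k \mathcal N/\mathcal N^2 \le d$; as the reverse inequality $\dim_k \mathcal N/\mathcal N^2 \ge \dim R = d$ always holds, equality follows and $R$ is regular. (Equivalently, one can argue directly with the surjection $\mathcal N/\mathcal N^2 \twoheadrightarrow \mathcal N/(\mathcal N^2 + aR) \cong \bar{\mathcal N}/\bar{\mathcal N}^2$, whose kernel — the image of $aR$ — has $k$-dimension at most $1$.) The only step needing any care, and thus the main (minor) obstacle, is the dimension equality $\dim R/(a) = \dim R - 1$, which is exactly where the hypothesis that $a$ is a non-zero divisor (so that $a$ avoids the minimal primes) enters.
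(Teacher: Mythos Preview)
Your proof is correct and follows essentially the same route as the paper: both arguments establish $\dim R/(a)=d-1$ from the fact that $a$ is a non-zero divisor, then compare embedding dimensions via the surjection $\mathcal N/\mathcal N^2 \twoheadrightarrow \bar{\mathcal N}/\bar{\mathcal N}^2$ (whose kernel is spanned by the class of $a$) to get $\dim_k \mathcal N/\mathcal N^2 \le d$, and conclude by the standard inequality $\dim_k \mathcal N/\mathcal N^2 \ge \dim R$. Your parenthetical alternative is in fact exactly the paper's phrasing.
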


\begin{proof} Let dim($R$) $= d$, $R':=R/\mathcal N$, 
${\mathcal N}'$ the maximal ideal of $R'$. By \cite{AM}, Cor. 12.18, dim($R'$)$=d-1$, The natural homomorphism of vectors spaces (over the common residue field $k$) 
${\mathcal N}/{\mathcal N}^2 \to {\mathcal N}'/{\mathcal N}'^2$ is onto and its kernel is generated by $a + {\mathcal N}^2$, hence 
 we have ${dim}({\mathcal N}/{\mathcal N}^2) \leq {dim}({\mathcal N}'/{\mathcal N}'^2) + 1$, where $dim$ means dimension as  a vector space. But by the assumed regularity of $R'$, the right hand side is equal to 
$ {\rm dim}(R') +1 = {\rm dim}(R) -1 + 1 = {\rm dim}(R) = d$. Hence ${\rm dim}(R) = {dim}({\mathcal N}/{\mathcal N}^2)$ and $R$ is regular.
\end{proof}
\begin{pro}
\label{P:compsm}
The  notation is as in 1.1(a), we asume that $R$ is ${\mathcal N}$-smooth over $A$. Let 
$a_1, \ldots, a_n$ be elements of $R$ inducing a regular system of parameters 
$a'_1, \ldots, a'_n$ of the regular local ring $R^{(0)}$ and $I=(a_1, \ldots, a_r)R$, $r \leq n$ . 
 If 
$p: Z' \to Z$ is the blowing-up of $Z$ with center $I$ and $\pi ':Z' \to S$ the composition of $p$ and $\pi$,   
then $\pi'$ is a smooth morphism.
\end{pro}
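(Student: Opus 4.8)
The plan is to peel off the exceptional divisor by means of Proposition \ref{P:divi} and then recognize that divisor as a projective space over a base that is already known to be smooth over $A$.

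First I would observe that $p\colon Z'\to Z$ restricts to an isomorphism over the open subscheme $Z\setminus V(I)$, so over that open set $\pi'$ agrees with the restriction of the smooth morphism $\pi$ to an open subscheme and is therefore smooth. Next, write $D:=V(I\cO_{Z'})$ for the exceptional divisor. Since $I\cO_{Z'}$ is an invertible $\cO_{Z'}$-module it is locally generated by a single element, and that element, generating a free rank-one module, is a non-zero divisor; hence $D$ is an effective Cartier divisor on the noetherian scheme $Z'$ in the sense of Proposition \ref{P:divi}, and $Z'\setminus D$ is exactly the open set considered above, over which $\pi'$ is already smooth. By Proposition \ref{P:divi} it then suffices to show that the morphism $D\to S$ induced by $\pi'$ is smooth.

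To handle $D$, I would invoke the regular sequence hypothesis. By Lemma \ref{L:regse}(i) the sequence $a_1,\ldots,a_n$ is regular in $R$, so its initial segment $a_1,\ldots,a_r$ is a regular sequence as well; consequently the associated graded ring $\mathrm{gr}_I(R)=\bigoplus_{k\ge 0}I^k/I^{k+1}$ is canonically isomorphic to the polynomial ring $(R/I)[T_1,\ldots,T_r]$, and therefore $D=\mathrm{Proj}(\mathrm{gr}_I R)\cong \mathbf{P}^{r-1}_{\Spec(R/I)}$. By Lemma \ref{L:regse}(iii), $R/I=R/(a_1,\ldots,a_r)R$ is $\cN$-smooth over $A$, so $\Spec(R/I)\to S$ is smooth (using, as in \ref{V:ap1.0}(c), that over the characteristic-zero residue field regularity of the closed fiber already yields smoothness). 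Hence $D\cong \mathbf{P}^{r-1}_{\Spec(R/I)}\to\Spec(R/I)\to S$ is a composition of smooth morphisms, and the result follows.

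The steps needing the most care are the identification $D\cong\mathbf{P}^{r-1}_{\Spec(R/I)}$, which rests on $a_1,\ldots,a_r$ forming a regular sequence so that $\mathrm{gr}_I R$ is a polynomial ring, and the verification that $D$ meets the hypothesis of Proposition \ref{P:divi}, i.e.\ that it is locally cut out by a non-zero divisor. A more pedestrian alternative would be to cover $Z'$ by the standard charts $\Spec R[x_i:i\ne j]/(a_i-a_jx_i:i\ne j)$ and check $A$-smoothness of each chart directly via the power-series description of \ref{P:preries}; but the divisorial argument above is shorter and cleaner.
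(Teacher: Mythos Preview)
Your proof is correct and follows essentially the same route as the paper: both use Proposition \ref{P:divi} to reduce to checking smoothness of the exceptional divisor, then identify that divisor as projective space over $\Spec(R/I)$ via the regular-sequence isomorphism $\mathrm{gr}_I(R)\cong (R/I)[T_1,\ldots,T_r]$, and finally invoke Lemma \ref{L:regse}(iii). The only cosmetic difference is that the paper applies Proposition \ref{P:divi} on each affine chart $\Spec R[a_j/a_i]$ separately, whereas you apply it once globally on $Z'$; your version is slightly cleaner.
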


\begin{proof}.  
Let $R_i:= R[a_1/a_i,\cdots, a_n/a_i]$.
 By the usual local description of the blowing-up, it suffices to show that if we regard $R_i$ as an $A$-algebra via the composition homomorphism $A \to R \to R_i$, then the resulting morphism $U_i:= \Spec (R_i) \to S$ is smooth, for all 
$i=1, \ldots, n$. To simplify the notation take $i=1$. Let $E \subset Z'$ be the exceptional divisor of our blowing-up, $E_i := E \cap U_i$ (note that that $U_i$ is naturally identified to an open of $Z'$). By Proposition \ref{P:divi}, it suffices to show that the induced morphisms from $U'_i:=U_i - E_i$ and $E_i$ to $S$ are both smooth. Since $U'_i$ is isomorphic to $Z-V(I)$, and $Z$ is smooth over $S$,  $U'_i \to S$ is smooth. Concerning, $E_i \to S$, since 
$a_1, \dots, a_r$ is a regular sequence in $R$, then we have an isomorphism of graded rings 
${\rm gr}_I(R) = (R/I) [T_1, \ldots, T_r]$, where the $T_i$'s are indeterminates. 
The Proj of this is $E$ and, ``dehomogenizing'', $E_i$ is isomorphic to 
$\sp ( (R/I) [t_2, \ldots, t_r])$ (see \cite{K}, p. 152), where  $t_i = T_i/T_1$, $i = 2, \ldots ,r$. Since $(R/I) [t_2, \ldots, t_r]$ is a polynomial ring over $R/I$ and $R/I$ is smooth over $A$ (by Lemma \ref{L:regse}, iii), the smoothness of the projection $E_i \to S$ follows.
\end{proof}

\smallskip

 On the following proposition we use the notation of \ref{V:bn1.0}-\ref{V:bn3.0}.
\begin{pro}
\label{P:preries}
Let $W \to S=\Spec (A)$ be a smooth morphism, $w \in W$, $a_1, \ldots, a_n$ a partial system of $A$-regular parameters of $R=\cO _{W,w}$, $I=(a_1, \ldots, a_n)R$. 
Let $\hat{R}$ be the $I$-completion of $R$. Then, $\hat{R}$ is isomorphic to a power series ring $A'[[x_1, \ldots, x_n]]$, so that the isomorphism sends $a_i$ into $x_i$, $i=1, \ldots, n$ and $A'$ is isomorphic to $R/I$.  
\end{pro}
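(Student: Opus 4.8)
The plan is to reduce everything to the structure theorem for complete local rings of a smooth morphism and to the fact, already essentially recorded in \ref{V:bn7.0} and \ref{P:twocomp}, that completion with respect to a partial $A$-regular parameter ideal $I$ behaves like completion with respect to a subset of variables. First I would localize and complete in the right order: since $a_1,\dots,a_n$ is part of an $A$-regular system of parameters, by \ref{L:regse}(i) it is a regular sequence in $R$, and $R/I$ is again an $\mathcal N$-smooth (hence regular) local $A$-algebra by \ref{L:regse}(iii). Put $A':=R/I$. The claim is then that the $I$-adic completion $\hat R$ is isomorphic to $A'[[x_1,\dots,x_n]]$ with $a_i\mapsto x_i$.

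The key steps, in order, are as follows. (1) Identify the associated graded ring: because $a_1,\dots,a_n$ is a regular sequence in the noetherian local ring $R$, one has the standard isomorphism $\mathrm{gr}_I(R)\cong (R/I)[T_1,\dots,T_n]=A'[T_1,\dots,T_n]$, with $T_i$ the image of $a_i$ in $I/I^2$. (2) Use this to get a surjection $A'[[x_1,\dots,x_n]]\twoheadrightarrow \hat R$. Here one first lifts $A'=R/I$ to $\hat R$: since $\hat R$ is $I\hat R$-adically complete with $\hat R/I\hat R\cong A'$ and (after \ref{V:bn1.0}) $A'$ contains a coefficient field, one can choose a compatible system of liftings — or more cheaply, observe $\hat R$ is a complete equicharacteristic local ring and $A'$ maps into it as a subring lifting the quotient by $I\hat R$; I would spell this out via the field of representatives of \ref{V:bn1.0} together with Hensel lifting of the generators of $A'$ over that field. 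Then sending $x_i\mapsto a_i$ gives a continuous $A'$-algebra map $\varphi:A'[[x_1,\dots,x_n]]\to\hat R$ which is surjective because it is surjective on the associated graded rings by step (1). (3) Injectivity: both rings are $(x_1,\dots,x_n)$-, resp. $I$-, adically complete and separated, and $\varphi$ induces an isomorphism on associated graded rings (again by step (1), matching $A'[T]$ with $\mathrm{gr}_I(R)$ degree by degree); a surjection of complete separated filtered rings inducing an isomorphism on graded pieces is an isomorphism, so $\varphi$ is an isomorphism. Finally, $A'\cong R/I$ is exactly the statement recorded along the way, so the proposition follows.

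The main obstacle I anticipate is step (2), the lifting of the coefficient ring $A'=R/I$ into $\hat R$ compatibly with the $A$-algebra structure — i.e. producing a genuine ring section of $\hat R\to\hat R/I\hat R=A'$, not merely abstractly but one through which the original map $A\to R\to\hat R$ factors. For a general base this is delicate, but here $A\in\mathcal A$ is artinian with residue field $k$ of characteristic zero, so $A$ (and every local $A$-algebra in sight) is equicharacteristic and complete, and the coefficient-field machinery of \ref{V:bn1.0} applies; combined with the fact that $A'$ is smooth over $A$ one gets the section by the infinitesimal lifting property. The remaining verifications — that $\mathrm{gr}_I(R)$ is a polynomial ring over $R/I$, and that $\varphi$ is bijective once it is a graded isomorphism — are routine and I would not belabor them. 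Everything here is parallel to the classical case $A=k$ a field (Cohen structure theorem), the only new input being the smoothness of $R/I$ over $A$ from \ref{L:regse}(iii).
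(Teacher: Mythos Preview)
Your proposal is correct and follows essentially the same route as the paper: both arguments rest on (a) the regular-sequence isomorphism $\mathrm{gr}_I(R)\cong (R/I)[T_1,\dots,T_n]$, (b) lifting $A'=R/I$ into $\hat R$ via the infinitesimal lifting property afforded by the smoothness of $R/I$ over $A$ (\ref{L:regse}(iii)), and (c) deducing that the resulting map $A'[[x_1,\dots,x_n]]\to\hat R$ is an isomorphism from the graded isomorphism. The paper organizes this slightly differently, first treating the case where a section $A'\hookrightarrow R$ already exists and then constructing one in $\hat R$ by successive liftings $A'\to R/I^j$, but the content is the same; your anticipated ``main obstacle'' in step (2) is exactly what the paper resolves, and by the same smoothness argument you propose.
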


 \begin{proof}
 (i) First we shall prove this result with the added assumption that $R$ contains a subring $A'$ isomorphic to $R/I$ via the canonical quotient homomorphism. In this case $R$ is not necessarily local, and $a_1, \ldots, a_n$ may be assumed to be just a regular sequence.
 
 Consider the ring homomorphism  $\psi : A'[x_1,\ldots,x_n] \to R$ (where the elements $x_1, \ldots, x_n$ are algebraically independent over $A'$) such that $\psi(b)=b$ if $b \in A'$ and 
 $\psi(x_i)=a_ i$ for all $i$. We claim that the induced homomorphism of completions, with respect to the ideals $(x_1, \ldots, x_n)$ and $(a_1, \ldots, a_n)$ respectively, is the desired isomorphism. It suffices to show that for all positive integer $j$ the homomorphism  
 $\psi _n :  A'[x_1,\ldots,x_n]/(x_1, \ldots, x_n)^{j} \to R/I^j A'$ induced by $\psi$ is bijective. By taking quotients this will be true if the induced homomorphism 
 $(x_1, \ldots, x_n)^{j} /(x_1, \ldots, x_n)^{j+1}  \to I^{j}/I^{j+1}$ is bijective, for all positive integer $j$. But the later statement is true by the isomorphism of graded rings 
 $R/I[x_1, \ldots, x_n] \approx {\mathrm gr}_I(R)$ (proved, e.g., in \cite{K}, page 152).
 
 (ii) Let us consider now the general case. First let us check that the completion $\hat{R}$ contains a subring $A'$ mapping isomorphically onto $\hat{R}/\hat{I} \approx R/I$ via the quotient map (where $\hat{I}:=I \hat{R}$). To see this, consider the commutative diagram 
 $$\begin{CD}
 A @>>> {R/I^2}\\
 @VVV      @VVV\\
 A' @>{\alpha _1}>>{R/I}
   \end{CD} $$
(recall that $R$ is an $A$-algebra). Note that by \ref{L:regse}, $A' \approx R/I$ is smooth over $A$, hence the is a homomorphism ${\alpha}_2:A' \to R/I^2$ making the resulting augmented diagram commutative. Thus, we obtain a commutative diagram:
$$\begin{CD}
 A @>>> {R/I^3}\\
 @VVV      @VVV\\
 A' @>{\alpha _2}>>{R/{I^2}}
   \end{CD} $$
   As before, the smoothness of $A'$ over $A$ gives us a homomorphism 
   ${\alpha}_3:A' \to R/I^3$ making the resulting augmented diagram commutative. Reiterating, we get a system of compatible homomorphisms 
  ${\alpha}_j:A' \to R/I^j$ for all positive integer $j$, which yields a homomorphism 
  $A' \to \hat{R}$ inducing an isomorphism $A' \approx  \hat{R}/\hat{I}$. Thus the image of $A'$ (still denoted by $A'$) is the desired subring of $\hat{R}$. 
  
  (iii) Now apply the result of (i) to $\hat{R}$. If $R^*$   denotes its completion with respect to the ideal $(a_1, \ldots , a_n)$, then $R^*$ is isomorphic to a power series ring $A'[[x_1, \ldots, x_n]]$, so that the isomorphism sends $a_i$ into $x_i$. But since $\hat{R}$ is 
$(a_1, \ldots , a_n)$-complete, the natural homomorphism $\hat{R} \to R^*$ is an isomorphism. Thus, $\hat{R}$ has the desired property, and the proposition is proved.  
 \end{proof}
 
 Next we present a result on blowing-ups that we  use several times.
 
 \begin{pro}
 \label{P:bloui}
 Let $f:X \to T$ be a morphism of schemes, $C \subset X$ a close subscheme, flat over $T$, $J=I(C)$, such that $J_x \subset {\cO}_{X,x}$ is generated by a regular sequence , for all $x \in C$. Let $T' \to T$ be a morphism, 
 $X'=X \times _T T'$, $p:X' \to X$  the natural projection, $C'=C \times _T T'=p^{-1}(C)$, $X_1 \to X$ and $X_1' \to X_1$ the blowing-ups with centers $C$ and $C'$ respectively. Then, there is a natural isomorphism $X_1' = X_1 \times _X X'$ 
 \end{pro}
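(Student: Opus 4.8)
The statement is the standard compatibility of blowing-up with (flat) base change, under the hypothesis that the center is locally a complete intersection relative to the base. The plan is to reduce to the affine case and then to the local description of the Rees algebra. First I would reduce to $T = \Spec(R)$, $T' = \Spec(R')$, $X = \Spec(A)$ all affine; since both sides of the claimed isomorphism are constructed by glueing affine pieces compatibly, and since the formation of the blowing-up commutes with restriction to opens of $X$, it suffices to treat this case. Writing $J = I(C) \subset A$, the blowing-up $X_1 \to X$ is $\Proj$ of the Rees algebra $\mathcal{R}(J) = \bigoplus_{n \ge 0} J^n t^n$, and $X_1' \to X'$ is $\Proj$ of $\mathcal{R}(J') = \bigoplus_{n\ge 0} (J')^n t^n$ where $J' = J \cdot (A \otimes_R R')$. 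So the whole statement comes down to producing a natural isomorphism of graded $(A\otimes_R R')$-algebras
$$ \mathcal{R}(J) \otimes_R R' \;\cong\; \mathcal{R}(J'), $$
compatible enough to descend to $\Proj$.

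\textbf{Key step.} The map $\mathcal{R}(J)\otimes_R R' \to \mathcal{R}(A\otimes_R R')$ (sending $J^n t^n \otimes 1$ to the image of $J^n$ times $t^n$) is always a surjection of graded algebras; the content is that it is \emph{injective} in each degree, i.e.\ that the natural map $J^n \otimes_R R' \to (J')^n = (J\cdot(A\otimes_R R'))^n$ is an isomorphism. Here is where the complete-intersection hypothesis enters. Working locally at a point $x \in C$, the ideal $J_x$ is generated by a regular sequence, so the associated graded ring of $\cO_{X,x}$ along $J_x$ is a polynomial ring: $\mathrm{gr}_{J_x}(\cO_{X,x}) \cong (\cO_{X,x}/J_x)[T_1,\dots,T_r]$ (this is exactly the isomorphism already invoked in \ref{P:preries} and \ref{P:compsm}, citing \cite{K}, p.\ 152). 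In particular each $J^n/J^{n+1}$ is a free $\cO_{C,x}$-module, hence flat over $T$ because $C$ is flat over $T$ by hypothesis. A descending induction on $n$ together with the short exact sequences $0 \to J^{n+1}/J^{n+2} \to J^n/J^{n+2} \to J^n/J^{n+1} \to 0$ then shows that each $\cO_X/J^n$, and each $J^n$, is flat over $\cO_T$ (off $C$ everything is trivial since $J$ is the unit ideal there). Flatness of $J^n$ over $R$ gives that $J^n \otimes_R R' \hookrightarrow A \otimes_R R'$ is injective with image $(J^n) (A\otimes_R R')$; combining with the graded-piece description one identifies this image with $(J')^n$, yielding the desired degree-wise isomorphism.

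\textbf{Conclusion.} Once $\mathcal{R}(J)\otimes_R R' \cong \mathcal{R}(J')$ as graded $(A\otimes_R R')$-algebras, applying $\Proj$ and using the standard compatibility of $\Proj$ with base change ($\Proj(\mathcal{S}\otimes_R R') = \Proj(\mathcal{S})\times_{\Spec R}\Spec R'$ for a graded $R$-algebra $\mathcal{S}$, cf.\ \cite{H}, II.7) gives
$$ X_1' = \Proj(\mathcal{R}(J')) = \Proj(\mathcal{R}(J))\times_X X' = X_1 \times_X X', $$
and all maps in sight ($X_1' \to X_1$, $X_1'\to X'$, and the projection) are the natural ones, so the isomorphism is the canonical one. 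Finally I would remark that the isomorphism is independent of the affine choices, hence glues, because the Rees-algebra construction is local on $X$ and the identifications are the obvious ones.

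\textbf{Main obstacle.} The only real work is the flatness argument for the powers $J^n$ over $T$ — i.e.\ proving that the degree-$n$ piece of the Rees algebra is $T$-flat, which is what makes the base-change map an isomorphism rather than merely a surjection. This is where one genuinely uses both hypotheses (center flat over $T$, and center a relative complete intersection); it is handled by the regular-sequence/associated-graded computation plus the induction on $n$ sketched above, a computation entirely parallel to the local analysis already carried out in \ref{P:regse}, \ref{P:compsm} and \ref{P:preries}. Everything else is formal manipulation of $\Proj$ and base change.
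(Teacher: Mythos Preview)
Your approach is essentially the one taken in the paper: reduce to the affine situation, show that the Rees algebra commutes with the base change $R \to R'$ in each degree, and then take $\Proj$. The paper packages the algebraic step as a separate lemma (Lemma~\ref{L:bla}) and proves exactly the same flatness statement via the associated-graded computation.

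Two small slips are worth correcting. First, the exact sequence you wrote, $0 \to J^{n+1}/J^{n+2} \to J^n/J^{n+2} \to J^n/J^{n+1} \to 0$, is not the one that drives the induction; you want $0 \to J^n/J^{n+1} \to A/J^{n+1} \to A/J^n \to 0$, and the induction is ascending (base case $A/J$ flat by hypothesis), not descending. Second, in the final step it is the flatness of $A/J^n$ over $R$ (i.e.\ the vanishing of $\mathrm{Tor}_1^R(A/J^n,R')$) that gives the injectivity of $J^n \otimes_R R' \to A \otimes_R R'$, not the flatness of $J^n$ itself; indeed $X$ is not assumed flat over $T$, so $J^n$ need not be $R$-flat, but this is irrelevant to the argument. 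With these two adjustments your proof is correct and coincides with the paper's.
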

 
 \begin{voi}
 In particular, we may take as $T'$ a closed point $t$ of $T$ and the natural morphism 
 $\Spec (k(t)) \to T$. By the proposition, we may identify the blowing -up of $ f^{-1}(t)$ with center $f^{-1}(t) \cap C$ with ${f_1}^{-1}(t)$, where $f_1:X_1 \to T$ is obtained by composition.
 
 By using the definition (or construction ) of the blowing-up given in \cite{H}, page 163, Proposition \ref{P:bloui} is an easy consequence of the following algebraic lemma. In it, if $B$ is a ring and $I$ and ideal of $B$, we write 
 ${\cP}_{B}(I):=B \oplus I \oplus I^2 \oplus \cdots $
  (a graded $B$-algebra). 
 \begin{lem}
 \label{L:bla}
 Let $R \to B$ be a homomorphism of  rings, $I \subset B$ an ideal, generated by a regular sequence, such that $B/I$ is $R$-flat. Let $R \to R'$ a ring homomorphism, $B':=R' \otimes _R B$ (naturally a $B$-algebra) and $I':=IB'$. Then,
  ${\cP}_B \otimes R'={\cP}_{B'}(I')$. 
 \end{lem}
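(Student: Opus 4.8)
The plan is to prove Lemma \ref{L:bla} by a direct computation with the Rees-type algebras, using that $I$ is generated by a regular sequence to get an explicit presentation that is visibly stable under base change. First I would recall that for an ideal $I$ generated by a regular sequence $a_1,\dots,a_r$ in $B$, the Rees algebra $\mathcal P_B(I)=\bigoplus_{n\ge 0}I^n$ has a canonical presentation: there is a surjection $B[T_1,\dots,T_r]\twoheadrightarrow\mathcal P_B(I)$ sending $T_i$ to $a_i$ (placed in degree one), and its kernel is generated by the "Koszul-type" linear relations $a_iT_j-a_jT_i$, $1\le i<j\le r$. (This is the standard fact that for a regular sequence the Rees algebra equals the symmetric algebra of $I$; see e.g. the reference to \cite{K}, p.~152, already invoked in \ref{P:preries} and \ref{P:compsm} for the associated graded ring, of which this is the analogue.) So $\mathcal P_B(I)\cong B[T_1,\dots,T_r]/(a_iT_j-a_jT_i)$.

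Next I would observe that tensoring this presentation with $R'$ over $R$ is harmless. Concretely, apply $R'\otimes_R(-)$ to the exact sequence $0\to \mathfrak a\to B[\mathbf T]\to\mathcal P_B(I)\to 0$, where $\mathfrak a$ is the ideal of linear relations. The point is that $\mathcal P_B(I)$ is $R$-flat in each degree: $I^n/I^{n+1}$ is a finite free $B/I$-module (again because $a_1,\dots,a_r$ is a regular sequence, so $\mathrm{gr}_I(B)\cong (B/I)[T_1,\dots,T_r]$), and $B/I$ is $R$-flat by hypothesis, while $I^0=B$ is $R$-flat since $B/I$ and $I$ both are (from the short exact sequences $0\to I^{n+1}\to I^n\to I^n/I^{n+1}\to 0$ one climbs down). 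Hence $\mathrm{Tor}_1^R(R',\mathcal P_B(I))=0$ and $R'\otimes_R(-)$ preserves exactness of the presentation. Therefore $R'\otimes_R\mathcal P_B(I)\cong B'[\mathbf T]/(\bar a_i T_j-\bar a_j T_i)$, where $\bar a_i$ denotes the image of $a_i$ in $B'$.

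Finally I would identify the right-hand side with $\mathcal P_{B'}(I')$. The images $\bar a_1,\dots,\bar a_r$ generate $I'=IB'$, and I claim they again form a regular sequence in $B'$: this follows from the same flatness, since $\mathrm{gr}_{I}(B)\cong(B/I)[\mathbf T]$ base-changes to $\mathrm{gr}_{I'}(B')\cong(B'/I')[\mathbf T]$ (using $R$-flatness of $B/I$ and of the graded pieces), and an ideal whose associated graded ring is a polynomial ring over $B'/I'$ on the classes of its generators is generated by a regular sequence. Applying the first step to $(B',I')$ gives $\mathcal P_{B'}(I')\cong B'[\mathbf T]/(\bar a_iT_j-\bar a_jT_i)$, matching what we obtained, and the isomorphisms are all compatible with the $B'$-algebra structure and the grading. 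This yields the natural identification $\mathcal P_B(I)\otimes_R R'=\mathcal P_{B'}(I')$ asserted in the lemma, and Proposition \ref{P:bloui} then follows by gluing these isomorphisms over an affine cover of $X$ and $T$ and invoking the $\mathrm{Proj}$ construction of the blowing-up as in \cite{H}, p.~163.

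The main obstacle I anticipate is not any single deep step but rather being careful that everything is \emph{natural} — that the presentation of $\mathcal P_B(I)$ by linear relations is canonical enough (independent of the chosen regular sequence up to the relevant isomorphisms, or at least functorial for the maps in sight) so that the local isomorphisms in the proof of \ref{P:bloui} glue to a global one. The flatness bookkeeping (showing each $I^n$ and each $I^n/I^{n+1}$ is $R$-flat, so that $\mathrm{Tor}_1^R$ vanishes degreewise) is routine but must be done; the regular-sequence hypothesis is exactly what makes it work, since without it the Rees algebra need not commute with base change.
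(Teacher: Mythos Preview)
Your approach via the explicit presentation of the Rees algebra is more elaborate than the paper's, which simply shows $I^n\otimes_R R'\cong (I')^n$ degreewise by proving that each $B/I^n$ is $R$-flat and then tensoring the exact sequence $0\to I^n\to B\to B/I^n\to 0$ with $R'$.

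There is, however, a genuine gap in your flatness bookkeeping. You assert that $B=I^0$ is $R$-flat by ``climbing down'' along $0\to I^{n+1}\to I^n\to I^n/I^{n+1}\to 0$, but this descending induction has no base case: nothing in the hypotheses makes $I^N$ flat for large $N$. In fact $B$ need not be $R$-flat at all. For instance, take $R=k[t]$, $B=k[t,x]\times k[t]/(t^2)$, and $I$ the principal ideal generated by $(x,1)$; then $(x,1)$ is a regular element, $B/I\cong k[t]$ is $R$-flat, yet $B$ has $t$-torsion. This is consistent with the intended application in Proposition~\ref{P:bloui}, where $X\to T$ is not assumed flat---only the center $C$ is.

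This error is harmless for your Step~2, since right exactness of $\otimes_R R'$ already yields $R'\otimes_R\mathcal P_B(I)\cong B'[\mathbf T]/(\bar a_iT_j-\bar a_jT_i)$ without any flatness. But it does matter in Step~3: to conclude that $\mathrm{gr}_I(B)\otimes_R R'\cong\mathrm{gr}_{I'}(B')$, i.e.\ that $(I^n/I^{n+1})\otimes_R R'\cong (I')^n/(I')^{n+1}$, you need $\mathrm{Tor}_1^R(R',B/I^n)=0$, not $\mathrm{Tor}_1^R(R',I^n)=0$. The correct induction runs \emph{upward}: from $0\to I^n/I^{n+1}\to B/I^{n+1}\to B/I^n\to 0$, flatness of $I^n/I^{n+1}$ (free over the flat $B/I$) together with flatness of $B/I^n$ gives flatness of $B/I^{n+1}$, with base case $B/I$. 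This is exactly the paper's argument. Once you insert it your route works, though at that point it is shorter to conclude $I^n\otimes_R R'=(I')^n$ directly, as the paper does, rather than detour through the presentation and the regular-sequence property of the $\bar a_i$.
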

 \begin{proof} The contention follows if we prove that $I^n \otimes _R R' = (I')^n = (IB)^n$, for all $n \ge 0$. But this follows if $B/I^n$ is $R$-flat, for all $n$. Indeed, from the exact sequence of $R$-modules
 $$0 \to I^n \to B \to B/I^n \to 0$$
 by tensoring with  $R'$ over $R$ we get, from the flatness of the $R$-module $B/I^n$, that the sequence 
 $$ 0 \to I^n \otimes _R R' {\stackrel{\phi _n} \longrightarrow} B' \longrightarrow (B/I^n) \otimes _r R'  \to 0 $$
 is exact. So, ${\phi _n}$ is injective and $I^n \otimes _R R' = {\rm Im}{(\phi}_n)=(IB')^n$, as needed.
 
 To see that $B/I^n$ is $R$-flat (for all $n \ge 0$), from the fact that $I$ is generated by a regular sequence, we obtain canonical isomorphisms 
 $$gr _I(B) = B/I \oplus I/I^2 \oplus I^2/I^3 \oplus \cdots  = (B/I)[T_1, \ldots, T_r]$$ 
 (a polynomial ring, see \cite{K}, Proposition 5.10). Thus, $I^n/I^{n+1}$ is a finite free $B/I$-module, for each $n$. Since $B/I$ is $R$-flat, it follows that  $I^n/I^{n+1}$ is $R$-flat, for all $n \ge 0$. By using the exact sequences 
 $$0 \to  I^n/I^{n+1} \to  B/I^{n+1} \to B/I^{n}\to 0$$
and induction, we obtain that  $B/I^{n}$ is $R$-flat for all $n \ge 0$, as desired. 
 \end{proof}
 
 \end{voi}

\providecommand{\bysame}{\leavevmode\hbox to3em{\hrulefill}\thinspace}

\end{document}